\newtheorem{theorem}{Theorem}[section]
\newtheorem{lemma}[theorem]{Lemma}
\newtheorem{corollary}[theorem]{Corollary}
\newtheorem{fact}[theorem]{Fact}
\theoremstyle{definition}
\newtheorem{example}[theorem]{Example}
\newtheorem{remark}[theorem]{Remark}
\newtheorem{definition}[theorem]{Definition}
\newtheorem{assumption}[theorem]{Assumption}
\def \d {\delta}
\def \L {\mathcal L}
\def \D {\mathcal D}
\def \UU {\mathcal U}
\def \dcl{\operatorname{dcl}}
\def \acl{\operatorname{acl}}
\def \alg{\operatorname{alg}}
\def \tp{\operatorname{tp}}
\def \qftp{\operatorname{qftp}}
\def \diag{\operatorname{diag}}
\def \Aut{\operatorname{Aut}}
\newcommand{\DF}{\textrm{\normalfont DF}}
\newcommand{\SCF}{\textrm{\normalfont SCF}}
\newcommand{\ACF}{\textrm{\normalfont ACF}}
\newcommand{\DPF}{\textrm{\normalfont DPF}}
\newcommand{\CCM}{\textrm{\normalfont CCM}}
\newcommand{\DCCM}{\textrm{\normalfont DCCM}}
\newcommand{\DCF}{\textrm{\normalfont DCF}}
\newcommand{\SCH}{\textrm{\normalfont SCH}}
\newcommand{\SDCF}{\textrm{\normalfont SDCF}}
\newcommand{\CODF}{\textrm{\normalfont CODF}}
\def\Ind#1#2{#1\setbox0=\hbox{$#1x$}\kern\wd0\hbox to 0pt{\hss$#1\mid$\hss}
\lower.9\ht0\hbox to 0pt{\hss$#1\smile$\hss}\kern\wd0}
\def\ind{\mathop{\mathpalette\Ind{}}}
\def\Notind#1#2{#1\setbox0=\hbox{$#1x$}\kern\wd0\hbox to 0pt{\mathchardef
\nn=12854\hss$#1\nn$\kern1.4\wd0\hss}\hbox to
0pt{\hss$#1\mid$\hss}\lower.9\ht0 \hbox to
0pt{\hss$#1\smile$\hss}\kern\wd0}
\title[]{Neostability transfers in derivation-like theories}
\author{Omar Le\'on S\'anchez}
\address{Omar Le\'on S\'anchez, Department of Mathematics, University of Manchester, Oxford Road, Manchester, United Kingdom M13 9PL}
\email{omar.sanchez@manchester.ac.uk}
\thanks{Omar Le\'on S\'anchez was supported by EPSRC grant EP/V03619X/1}
\author{Shezad Mohamed}
\address{Shezad Mohamed, Department of Mathematics, University of Manchester, Oxford Road, Manchester, United Kingdom M13 9PL}
\email{shezad.mohamed@manchester.ac.uk}
\thanks{}
\date{\today}
\subjclass[2010]{03C45, 03C60, 12H05}
\keywords{model theory, neostability, differential fields}
\begin{document}

\begin{abstract}
Motivated by structural properties of differential field extensions, we introduce the notion of a theory $T$ being derivation-like with respect to another model complete theory $T_0$. We prove that when $T$ admits a model companion $T_+$, several model-theoretic properties transfer from $T_0$ to $T_+$. These properties include completeness, quantifier elimination, stability, simplicity, and NSOP$_1$. We also observe that, aside from the theory of differential fields, examples of derivation-like theories are plentiful. 
\end{abstract}

\maketitle

\tableofcontents

\section{Introduction}

Extending the argument of simplicity of the theory ACFA, in \cite{CP98} Chatzidakis and Pillay studied the abstract condition of adding an automorphism to a first-order theory $T_0$ and proved that if such an expanded theory has a model companion $T_0A$, then $T_0A$ is simple whenever $T_0$ is stable (this stable-to-simple transfer result has been further generalised in \cite{BloMar19}). In this paper we propose an abstract analogue of this where instead of adding an automorphism, we expand $T_0$ to a theory $T$ that satisfies certain conditions which resemble structural properties of derivations.

\smallskip

Recall that given a difference field $(K,\sigma)$, the automorphism $\sigma$ extends (not necessarily uniquely) to the separable closure $K^{\text{sep}}$. In the case of a differential field $(K,\d)$ much more is true: the derivation extends \emph{uniquely} to any separably algebraic extension. This is a crucial difference between the theories of difference fields and differential fields; for instance, it is one of the reasons why $\DCF_0$ has quantifier elimination while ACFA does not. Another structural property of differential fields (or even differential rings) is that given two differential fields $(K,\delta_1)$ and $(L,\d_2)$ with a common differential subfield $(E,\d)$, the tensor product $K\otimes_E L$ has a \emph{unique} derivation extending those on $K$ and $L$ (note that this property also holds for difference fields). 

\smallskip

We extract the above two properties of differential field extensions to an abstract setup and define (in Definition~\ref{like}) the notion of a theory $T$ being derivation-like with respect to a complete and model complete theory $T_0$ equipped with an invariant ternary relation $\ind^0$. The motivating example, of course, is that the theory of differential fields in characteristic zero, $\DF_0$, is derivation-like with respect to $\ACF_0$ equipped with the algebraic disjointness relation $\ind^{\alg}$. In \S\ref{examples}, we provide several other instances of derivation-like theories; in particular, we note that the recently developed theory DCCM of compact complex manifolds with meromorphic vector fields, introduced by Moosa in \cite{Moo23}, is derivation-like. 

\smallskip

In \S\ref{mainresults}, under the assumption that $T$ has a model companion $T_+$ (and some assumptions on $\ind^0$), we prove that several model-theoretic properties transfer from $T_0$ to $T_+$. In particular, completeness and quantifier elimination transfer, the model-theoretic $\dcl$ and $\acl$ have a natural description, and the neostability properties of stability, simplicity, NSOP$_1$ (under additional conditions), and rosiness transfer from $T_0$ to $T_+$. 

While most neostability properties have local combinatorial descriptions, for each of the four mentioned above, there is a theorem of a form similar to the Kim--Pillay theorem -- one indicating a semantic way to characterise the given property in terms of the existence of an independence relation satisfying certain conditions. These appear in \cite{Kim13}, \cite{KimPi97}, \cite{CDHJR}, and \cite{Ad09}, respectively, and will be stated in full in Theorem~\ref{neostability} below.

By inducing (from $\ind^0$) a natural independence relation
\[
A\ind^+_C B \iff \acl(AC) \ind^0_{\acl(C)} \acl(BC)
\]
on the model companion $T_+$ and using Theorem~\ref{neostability}, in \S\ref{mainresults} we are able to prove that stability and simplicity transfer from $T_0$ to $T_+$. 

\begin{theorem}\label{intro-neostability-transfers}
Suppose $\ind^0$ is nonforking independence in $T_0$ and that for every model $M \models T_+$ we have that $\dcl_0(M) \models T_0$. Suppose also that $T$ is derivation-like with respect to $(T_0, \ind^0)$.
\begin{enumerate}
    \item If $T_0$ is stable, then $T_+$ is stable and $\ind^+$ is nonforking independence.
    \item If $T_0$ is simple, then $T_+$ is simple and $\ind^+$ is nonforking independence.
\end{enumerate}
\end{theorem}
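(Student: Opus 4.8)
The plan is to verify, in each case, the hypotheses of the relevant Kim--Pillay-style characterization theorem (Theorem~\ref{neostability}) for the induced relation $\ind^+$, and then read off that $T_+$ has the desired neostability property and that $\ind^+$ coincides with nonforking. So the work is entirely about checking that $\ind^+$ satisfies the axiomatic package: invariance, monotonicity, base monotonicity, transitivity, normality, symmetry, finite character, extension, local character, the independence theorem over models (for simplicity), and stationarity over models (for stability). The strategy for each axiom is the same: translate the statement about $\ind^+$ into a statement about $\ind^0$ using the definition $A \ind^+_C B \iff \acl(AC) \ind^0_{\acl(C)} \acl(BC)$, then invoke the corresponding property of $\ind^0 = $ nonforking in $T_0$ (which we have by hypothesis, since $T_0$ is stable, resp. simple), together with the structural description of $\acl$, $\dcl$, completeness and quantifier elimination in $T_+$ that the earlier sections of the paper establish from $T$ being derivation-like.

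First I would record the ``easy'' axioms. Invariance is immediate since $\ind^0$ is invariant and $\acl$ is invariant. Monotonicity, normality, base monotonicity, and finite character for $\ind^+$ follow from the corresponding properties of $\ind^0$ together with the facts that $\acl(AC) = \acl(\acl(A)\acl(C))$ and that $\acl$ of a set is the union of $\acl$ of its finite subsets; for finite character one also uses that a type is determined by its restrictions to finite tuples. Symmetry of $\ind^+$ is inherited directly from symmetry of nonforking in $T_0$. Transitivity likewise transfers once one observes that all the base sets appearing are algebraically closed. The slightly less routine point among these is extension: given $A \ind^+_C B$ and $\hat B \supseteq B$, one needs $A' \equiv_{BC} A$ with $A' \ind^+_C \hat B$; here I would work inside $\dcl_0(M)$ (using the hypothesis $\dcl_0(M) \models T_0$), apply the extension property of nonforking in $T_0$ to get the desired configuration for $\ind^0$, and then use the quantifier-elimination/amalgamation machinery for $T_+$ coming from the derivation-like hypotheses to realize this as an actual $T_+$-type over $BC$. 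This is where the two defining properties of a derivation-like theory — unique extension to ``algebraic'' extensions, and unique extension across tensor-product-like amalgams — do the heavy lifting, exactly as they do for $\DF_0$ over $\ACF_0$.

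The main obstacle is the independence theorem over models (for part (2)) and stationarity over models (for part (1)) — i.e., the amalgamation statements. For simplicity one is given a model $M \models T_+$, tuples $a_1 \equiv_M a_2$, and $B_1 \ind^+_M B_2$ with $a_i \ind^+_M B_i$, and must produce $a \ind^+_M B_1 B_2$ with $a \equiv_{M B_i} a_i$. The plan is: pass to the $T_0$-structures on $\dcl_0$ of everything in sight, use the independence theorem for nonforking in the simple theory $T_0$ to amalgamate the underlying $T_0$-types, then use the derivation-like amalgamation property to equip the resulting $T_0$-amalgam with the extra structure making it a model of $T$, embed it into a model of $T_+$, and finally invoke the description of types in $T_+$ (via $\qftp$ over algebraically closed sets plus completeness/QE) to conclude that the realization has the required type over each $MB_i$ and is $\ind^+$-independent from $B_1B_2$ over $M$. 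The delicate part is ensuring the $T$-structure on the amalgam is consistent and that no new algebraic elements appear that would spoil $\ind^+$-independence — this is precisely the content of the ``unique extension'' clauses in Definition~\ref{like}, so the proof must quote them carefully. For stability (part (1)), stationarity over models replaces the independence theorem and is handled the same way but using uniqueness of nonforking extensions in the stable theory $T_0$; one also separately checks local character, which transfers because forking in $T_0$ has bounded local character and $\acl$ adds only boundedly much. Once the Kim--Pillay (resp.\ Kim, resp.\ NSOP$_1$ for the later results) axioms are verified, the cited theorems give simultaneously that $T_+$ is simple (resp.\ stable) and that $\ind^+$ is nonforking independence, completing the proof.
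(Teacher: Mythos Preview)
Your overall strategy---verify the Kim--Pillay package for $\ind^+$ by reducing each axiom to the corresponding property of nonforking in $T_0$, using the derivation-like amalgamation and the description of $\acl_+$---is exactly the paper's approach (Theorems~\ref{thm-ind-properties-transfer}, \ref{thm-ind-thm-transfers}, \ref{thm-stationarity-transfers}, assembled in Corollary~\ref{neostability-transfers}). The transfer of the basic axioms and of stationarity is handled essentially as you describe.

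There is, however, a genuine gap in your sketch of the independence theorem for part~(2). You write: ``use the independence theorem for nonforking in $T_0$ to amalgamate the underlying $T_0$-types, then use the derivation-like amalgamation property to equip the resulting $T_0$-amalgam with the extra structure making it a model of $T$.'' But the element $a$ produced by the $T_0$-independence theorem carries no $\L$-structure at all---only an $\L_0$-type---so Definition~\ref{like} cannot be applied to it directly. The paper's fix (Claims~2 and~3 of Theorem~\ref{thm-ind-thm-transfers}) is nontrivial: one takes an ambient model $N\models T_+$ containing the data, uses the $\L_0$-isomorphisms $A_ia_i\mapsto A_ia$ to produce $\L$-isomorphic copies $N_1,N_2$ of $N$ each \emph{containing} $a$, arranges via careful $\ind$-calculus that $N_1\ind_a N_2$ and $N\ind_{A_1A_2}N_1N_2$, and then applies the derivation-like axiom \emph{twice}---first to amalgamate $N_1,N_2$ over $a$, then to amalgamate the result with $N$ over $\acl_0(A_1A_2)$. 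The uniqueness clause of Definition~\ref{like}(ii) is what guarantees the two $\L$-structures on $\acl_0(A_1A_2)$ agree. This double amalgamation and the accompanying independence bookkeeping is the core idea you are missing; a single application of derivation-like does not suffice.

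A smaller remark: the paper's actual proof of part~(2) (Corollary~\ref{neostability-transfers}(2) via Theorem~\ref{thm-ind-thm-transfers}) uses the hypothesis $T_0\subseteq T$ rather than merely $\dcl_0(M)\models T_0$; the latter is what is needed for stationarity in part~(1). Your use of $\dcl_0(M)\models T_0$ for extension is also not how the paper proceeds---extension transfers directly from $\ind^0$ using only monotonicity, extension, and QE for $T_+$ (Theorem~\ref{thm-ind-properties-transfer}(7)).
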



We also prove the following transfer result for NSOP$_1$.

\begin{theorem}
Suppose $\ind^0$ is Kim-independence in $T_0$, that $T_0$ has an independence relation $\ind^1$ such that $\ind^0_M \implies \ind^1_M$ for every $M \models T_+$, and that $T_0 \subseteq T_+$. Suppose also that $T$ is derivation-like with respect to $(T, \ind^1)$.

If $T_0$ is NSOP$_1$, then $T_+$ is NSOP$_1$ and $\ind^+$ is Kim-independence.
\end{theorem}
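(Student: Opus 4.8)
The plan is to apply the Kim--Pillay-style criterion for NSOP$_1$ recalled in Theorem~\ref{neostability} to the relation $\ind^+$, now read over models of $T_+$ as is appropriate for Kim-independence: for $M\models T_+$ one sets $A\ind^+_M B$ iff $\acl(AM)\ind^0_M\acl(BM)$, with $\acl$ computed in $T_+$ (so $\acl(M)=M$). Since $T_0\subseteq T_+$, the $\L_0$-reduct of a monster model $\CC\models T_+$ is a monster model of $T_0$, and (by the description of $\acl$ in $T_+$ obtained earlier) the $\L_0$-reduct of $\acl(AM)$ agrees with the $T_0$-algebraic closure of $AM$; moreover $\acl(aM)$ contains all the iterated derivatives of $a$, so $\ind^+$ automatically builds in the independence of these derivatives. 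The ``soft'' axioms of the criterion --- invariance, monotonicity, symmetry, strong finite character and existence over models --- transfer to $\ind^+$ from the corresponding properties of Kim-independence in the NSOP$_1$ theory $T_0$ by the same bookkeeping already used for $\ind^+$ in the stable and simple cases, so the real content is the independence theorem (3-amalgamation) over models.

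For the independence theorem, take $M\models T_+$ and tuples with $a\equiv_M a'$, $a\ind^+_M b$, $a'\ind^+_M c$ and $b\ind^+_M c$. Passing to the $\L_0$-reduct of $\CC$ gives $\acl_0(aM)\ind^0_M\acl_0(bM)$, $\acl_0(a'M)\ind^0_M\acl_0(cM)$, $\acl_0(bM)\ind^0_M\acl_0(cM)$ and $a\equiv^{T_0}_M a'$, where by the previous remark the tuples $a,a',b,c$ are understood as carrying their derivative sequences. Since $T_0$ is NSOP$_1$ and $\ind^0$ is its Kim-independence, the independence theorem in $T_0$ produces a tuple $a_*$ with $a_*\equiv^{T_0}_{\acl_0(bM)}a$, $a_*\equiv^{T_0}_{\acl_0(cM)}a'$ and $\acl_0(a_*M)\ind^0_M\acl_0(bcM)$. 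This $a_*$ has the correct $\L_0$-data, and the remaining job is to upgrade it to a tuple realising the desired $T_+$-type.

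To do so, I would use the derivation-like axioms to equip the $\L_0$-structure $\acl_0(a_*bcM)$ (inside the $\L_0$-reduct of $\CC$) with a model of $T$ that restricts to the $T$-structure $\CC$ carries on $\acl_0(bcM)$ and is compatible, via the $\L_0$-isomorphisms given by the previous paragraph, with the $T$-structures $\CC$ carries on $\acl(abM)$ over $\acl(bM)$ and on $\acl(a'cM)$ over $\acl(cM)$. Concretely: the amalgamation (tensor-product) axiom first recovers the $T$-structure on $\acl_0(bcM)$ from the $b$- and $c$-sides (which is forced since $b\ind^+_M c$) and then glues in the $a_*$-side over $M$, while the unique-extension axiom pushes the derivation up to the relevant algebraic closure. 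Because $T_+$ is the model companion of $T$ and has quantifier elimination (established earlier), this model of $T$ embeds into $\CC$ over $\acl_0(bcM)$; letting $a''$ be the image of $a_*$, the $\L$-embedding preserves quantifier-free types, so $\qftp_{T_+}(a''/bM)=\qftp_{T_+}(a/bM)$ and $\qftp_{T_+}(a''/cM)=\qftp_{T_+}(a'/cM)$, and quantifier elimination upgrades these to $a''\equiv_{bM}a$ and $a''\equiv_{cM}a'$; finally $\acl_0(a''M)\ind^0_M\acl_0(bcM)$ is inherited from $a_*$ by invariance of $\ind^0$, i.e.\ $a''\ind^+_M bc$. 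Having verified all hypotheses, Theorem~\ref{neostability} gives that $T_+$ is NSOP$_1$ and that $\ind^+$ coincides with Kim-independence.

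The main obstacle is the upgrading step just described, and it is precisely why the statement invokes an auxiliary relation $\ind^1$: the configuration $a_*$ handed to us by the $T_0$ independence theorem carries only $\ind^0$-independence, whereas the derivation-like amalgamation axiom is phrased with respect to $\ind^1$ and requires the pieces being glued to be $\ind^1$-independent over $M$. The hypothesis $\ind^0_M\implies\ind^1_M$ on models of $T_+$ is exactly what lets the output of the $T_0$ independence theorem feed the $\ind^1$ amalgamation axiom. A second delicate point is checking that the glued $T$-structure is coherent on the overlaps of the three sides and that embedding the resulting model of $T$ into $\CC$ does not disturb the intended $\L_0$-type over $bcM$, so that the $\ind^0$-independence, hence $\ind^+$-independence, genuinely survives; this leans on the earlier analysis of $\acl$ and of $\L_0$-types in $T_+$. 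As usual, the conclusion that $\ind^+$ \emph{is} Kim-independence is not obtained directly but falls out of the uniqueness half of the criterion once every axiom has been checked.
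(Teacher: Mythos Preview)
Your overall strategy is sound and close in spirit to the paper's, but the heart of the argument—the independence theorem over models—is handled quite differently, and your version has a gap that needs to be addressed.

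The paper (Theorem~\ref{thm-ind-thm-transfers}) does not try to equip $\acl_0(a_*bcM)$ directly with an $\L$-structure. Instead, after producing $a$ by the $T_0$ independence theorem (your $a_*$), it builds two $\L_0$-isomorphic copies $N_1,N_2$ of an ambient model $N\models T_+$, each containing $a$, and uses full existence for $\ind^1$ to arrange the \emph{specific} configuration $N_1\ind^1_a N_2$ and $N\ind^1_{A_1A_2} N_1N_2$ (this is the content of Claim~2 and is where the $\ind^1$-calculus is really used). It then applies the derivation-like axiom \emph{twice}: first to amalgamate $N_1$ and $N_2$ over $a$, then to amalgamate the result with $N$ over $\acl_0(A_1A_2)$. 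Because each $N_i$ is \emph{by construction} $\L$-isomorphic to $(N,A_i,a_i)$, the equalities $\tp_+^{N_i}(a/A_i)=\tp_+^N(a_i/A_i)$ are immediate—no further verification is needed. Finally the embedding into $\UU_+$ is taken over the model $N$, so model completeness alone suffices.

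Your route is a single amalgamation of $a_*$ with $\acl_0(bcM)$ over $M$, followed by an embedding over $\acl_0(bcM)$ using quantifier elimination. This is more economical, but the step you flag as ``delicate'' is genuinely missing: after amalgamating, you must show that the $\L$-structure induced on $\acl_0(a_*bM)$ inside your model $P$ agrees with the one pushed forward from $\acl_0(abM)$ via the $\L_0$-isomorphism $a\mapsto a_*$ over $\acl_0(bM)$ (and similarly on the $c$-side). The uniqueness clause of derivation-like (ii) does give this, but \emph{not} for the triple $(a_*,\acl_0(bcM),M)$ you amalgamated over—since $\acl_0(a_*bM)$ does not contain $\langle a_*,\acl_0(bcM)\rangle_{\L_0}$. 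You must instead reapply (ii) to the triple $(a_*,\acl_0(bM),M)$, checking $a_*\ind^1_M\acl_0(bM)$ separately, and compare $P$ with a second witness $\sigma(N')$ (the push-forward of a small $N'\preceq\CC$ under an $\L_0$-automorphism $\sigma$ fixing $\acl_0(bM)$ and sending $a\mapsto a_*$). This works, but it is the crux of the argument and you have not carried it out.

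A minor point: the ``soft'' axioms you list omit transitivity, extension, and chain local character, all of which appear in the criterion of Theorem~\ref{neostability}(iii); the paper handles these via Theorem~\ref{thm-ind-properties-transfer} and the observation that models of $T_+$ are models of $T_0$.
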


In particular, when $T_0$ is the theory of a very slim field, the relation $\ind^{\alg}$ is a natural choice for $\ind^1$. Finally, assuming that both $T_0$ and $T_+$ eliminate imaginaries, we obtain that rosiness also transfers. 

\begin{theorem}
    Suppose that $T_0$ eliminates imaginaries and is rosy with strict independence relation $\ind^0$. Suppose also that $T$ is derivation-like with respect to $(T_0, \ind^0)$ and that $T_+$ eliminates imaginaries. Then $\ind^+$ is a strict independence relation on $T_+$, and hence $T_+$ is rosy.
\end{theorem}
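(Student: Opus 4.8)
The plan is to deduce rosiness of $T_+$ from Adler's Kim--Pillay-type theorem, recalled as the final clause of Theorem~\ref{neostability}: a theory is rosy as soon as it admits a strict independence relation, and if the theory moreover eliminates imaginaries this relation may be taken on tuples from the home sort. Since $T_+$ eliminates imaginaries, it therefore suffices to show that the relation $\ind^+$, given by $A \ind^+_C B \iff \acl(AC) \ind^0_{\acl(C)} \acl(BC)$, is a strict independence relation on the monster model of $T_+$; the ``hence $T_+$ is rosy'' part is then immediate. This is entirely parallel to the way stability and simplicity are transferred, but lighter: no independence theorem or stationarity is in play.

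Concretely, one has to check that $\ind^+$ satisfies the axioms of a strict independence relation as recalled in Theorem~\ref{neostability} -- invariance, monotonicity, base monotonicity, transitivity, normality, extension, finite character and local character, together with symmetry and anti-reflexivity. The key observation is that each of these is inherited by $\ind^+$ from the corresponding property of $\ind^0$, by exactly the arguments already given in \S\ref{mainresults} en route to Theorem~\ref{intro-neostability-transfers}: the definition of $\ind^+$ is unchanged, and the verification of each of these ``structural'' axioms uses only that $\ind^0$ itself has that property, together with elementary facts about $\acl$ and the derivation-like set-up -- never that $\ind^0$ is nonforking in a simple theory. Thus invariance, symmetry, monotonicity, normality and finite character are essentially formal; base monotonicity, anti-reflexivity, and local character (a local-character cardinal for $\ind^+$ being read off from that of $\ind^0$ together with a uniform bound on the size of algebraic closures) come down to standard properties of $\acl$, for which elimination of imaginaries in $T_0$ guarantees that $\acl$ in the home sort is the correct notion. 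The one axiom carrying real content is \emph{extension}: given $A \ind^+_C B$ and $\hat B \supseteq BC$, one produces $A' \equiv_{BC} A$ with $A' \ind^+_C \hat B$ exactly as in the stability/simplicity transfers, using the derivation-like amalgamation hypotheses -- the abstract analogues of ``a derivation extends uniquely along a (separably) algebraic extension'' and ``the tensor product over a common subfield carries a unique compatible derivation'' -- to endow the amalgam of $\acl(AC)$ and $\acl(\hat B)$ over $\acl(C)$ with an $\mathcal{L}$-structure which, by model completeness of $T_+$, embeds into a model of $T_+$ realising the required type.

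The main obstacle, and essentially the only step not already dispatched by the machinery behind Theorem~\ref{intro-neostability-transfers}, is to confirm that this extension construction genuinely needs nothing more about $\ind^0$ than the extension axiom and the derivation-like conditions -- no amalgamation, no stationarity -- since here $\ind^0$ is merely a strict independence relation. I would therefore isolate from \S\ref{mainresults} the ``$\ind^0$-generic'' core of the construction and check that it transcribes verbatim under this weaker hypothesis; I expect this to require care but no new idea. A secondary, purely bookkeeping, point is to track imaginaries throughout: one must check that the resulting strict independence relation lives on real tuples, so that elimination of imaginaries in $T_+$ legitimately passes from ``$T_+^{\mathrm{eq}}$ carries a strict independence relation'' to ``$T_+$ is rosy'', and that $\acl$ in the definition of $\ind^+$ may be computed in the home sort throughout; both of these are consequences of the two elimination-of-imaginaries assumptions. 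With these points settled, Adler's theorem gives that $T_+$ is rosy.
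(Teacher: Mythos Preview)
Your approach is correct and essentially the same as the paper's: invoke Theorem~\ref{thm-ind-properties-transfer} to see that $\ind^+$ inherits axioms (1)--(8) of Definition~\ref{indprop} from $\ind^0$, and then check anti-reflexivity separately via $a\ind^+_C a \Rightarrow a\ind^0_{\acl_+(C)}a \Rightarrow a\in\acl_0(\acl_+(C))\cap\UU_+=\acl_+(C)$, the last equality being Lemma~\ref{acl}. Two small corrections: the axiom carrying content in Adler's list is \emph{full existence}, not \emph{extension}, so it is part~(6) of Theorem~\ref{thm-ind-properties-transfer} (needing only monotonicity, symmetry, and full existence of $\ind^0$) that does the work; and elimination of imaginaries in $T_0$ plays no role in the verification of the axioms for $\ind^+$---only EI in $T_+$ is used, to invoke Theorem~\ref{neostability}(iv).
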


Our method of proof relies on a detailed study of how the individual properties of independence relations constituting Definition~\ref{indprop} transfer from $T_0$ to $T_+$. This is explicitly done in Theorems~\ref{thm-ind-properties-transfer}, \ref{thm-ind-thm-transfers}, and \ref{thm-stationarity-transfers}.

As consequences of these theorems, we obtain the following familiar results on the stability and simplicity of theories of fields with operators.

\begin{corollary}
    \begin{enumerate}
        \item $\SCF_{p,e}$ (for $e < \infty$) is stable, and forking independence coincides with algebraic disjointness in the language with constant symbols for a fixed $p$-basis and $\lambda$-functions.

        \item $\DCCM$ is stable, and forking independence coincides with that in $\CCM$.
        
        \item The model companion of a bounded PAC differential field of characteristic $0$ is simple, and forking independence coincides with algebraic disjointness.

        \item $\CODF$ is rosy.
    \end{enumerate}
\end{corollary}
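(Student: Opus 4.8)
The plan is to obtain each clause by specialising one of the transfer theorems above to suitable data $(T_0,\ind^0,T,T_+)$. In each case I would need to check two things: that $T$ is derivation-like with respect to $(T_0,\ind^0)$ in the sense of Definition~\ref{like} --- i.e.\ that the operator(s) of $T$ extend uniquely along ``separably algebraic'' extensions and along $\ind^0$-independent amalgams over a common substructure --- and the side condition $\dcl_0(M)\models T_0$ for every $M\models T_+$. The promised description of forking independence (resp.\ of the strict independence relation) would then be read off the defining formula for $\ind^+$.

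For clauses (1) and (2) I would invoke Theorem~\ref{intro-neostability-transfers}(1). For (1), take $T_0=\ACF_p$ with $\ind^0=\ind^{\alg}$, which is nonforking independence in the stable (indeed strongly minimal) theory $\ACF_p$; let $T$ be the theory of (not necessarily separably closed) characteristic-$p$ fields of imperfection degree $e$ carrying a $p$-basis $b_1,\dots,b_e$ together with the associated $\lambda$-functions, and let $T_+=\SCF_{p,e}$, which in this language is the model companion of $T$ and has quantifier elimination. A $p$-basis and its $\lambda$-functions extend uniquely along any separably algebraic extension, and along any amalgam of two such structures that is algebraically disjoint over the common subfield, so $T$ is derivation-like over $(\ACF_p,\ind^{\alg})$; and for $M\models\SCF_{p,e}$ the perfect hull $M^{1/p^\infty}=\dcl_0(M)$ is both separably closed and perfect, hence algebraically closed, which gives the side condition. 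Theorem~\ref{intro-neostability-transfers}(1) then yields that $\SCF_{p,e}$ is stable and that $\ind^+$ --- which unwinds to algebraic disjointness of $\lambda$-closed subfields, i.e.\ to algebraic disjointness in this language --- is nonforking independence. For (2), take $T_0=\CCM$ with $\ind^0$ its (stable) nonforking independence, let $T$ be the theory of compact complex manifolds equipped with a meromorphic vector field, and let $T_+=\DCCM$ be Moosa's model companion \cite{Moo23}; such a vector field extends uniquely along finite covers and along fibre products over a common substructure --- the $\CCM$-analogues of the two clauses of Definition~\ref{like} --- as recorded in \S\ref{examples}, and the side condition is immediate from existential closedness. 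Theorem~\ref{intro-neostability-transfers}(1) then gives that $\DCCM$ is stable with $\ind^+$ nonforking, and $\ind^+$ is visibly the restriction of nonforking in $\CCM$.

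For (3), let $F$ be a bounded PAC field of characteristic $0$ and take $T_0=\mathrm{Th}(F)$ in a language in which it is model complete (e.g.\ after naming suitable parameters); then $T_0$ is simple and $\ind^0$, its nonforking independence, is on models the algebraic disjointness of the relative algebraic closures. Let $T$ be the theory of differential expansions $(F',\delta)$ of models of $T_0$, with model companion $T_+$. Since $F$ is perfect, a derivation extends uniquely along algebraic extensions and along algebraically disjoint amalgams, so $T$ is derivation-like over $(T_0,\ind^0)$; and an existentially closed model of $T$ is again bounded and PAC, which gives the side condition. Theorem~\ref{intro-neostability-transfers}(2) then shows $T_+$ is simple with $\ind^+$ --- again algebraic disjointness --- its nonforking independence. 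For (4), take $T_0=\RCF$, which is complete, model complete, eliminates imaginaries and is rosy, with $\ind^0$ its strict independence relation (thorn-forking, equivalently dimension-independence); let $T$ be the theory of ordered differential fields and $T_+=\CODF$ its model companion, which has quantifier elimination and eliminates imaginaries. A derivation extends uniquely along the real closure, which is algebraic, and, along a strictly independent amalgam of real closed fields over a common real closed subfield, extends uniquely to the field compositum regardless of which extension of the order is chosen, so $T$ is derivation-like over $(\RCF,\ind^0)$. The rosiness transfer theorem above then shows that $\ind^+$ is a strict independence relation on $\CODF$, whence $\CODF$ is rosy.

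The main obstacle I foresee is the verification of the derivation-like axioms of Definition~\ref{like} in each concrete case, namely checking that the chosen $\ind^0$ exactly matches the combinatorics of independent amalgamation for the operator: in (1) that algebraic disjointness is precisely what lets the $\lambda$-structure amalgamate uniquely, in (3) that nonforking independence in the bounded PAC field is visible as algebraic disjointness, and in (4) that strict independence of real closed fields over a common real closed subfield forces a unique differential amalgam (the order on the amalgam being irrelevant to the derivation). The remaining hypotheses --- existence of the model companion, quantifier and imaginary elimination for $T_+$, and $\dcl_0(M)\models T_0$ --- are classical or routine, and once they are in place each clause follows at once from the cited transfer theorem.
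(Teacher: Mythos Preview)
Your proposal is correct and follows essentially the same approach as the paper: each clause is obtained by verifying that the relevant operator theory is derivation-like with respect to the indicated $(T_0,\ind^0)$ and then invoking the appropriate transfer theorem (Corollary~\ref{neostability-transfers} for (1)--(3), Corollary~\ref{rosy-transfers} for (4)), exactly as carried out in \S\ref{examples}. The only small discrepancies are in the bookkeeping: for (3) the paper works in a language $\L_{\text{fields}}(C)$ and uses largeness of PAC fields to get that $T_0$ is very $\L_0$-slim (so that $\ind^0$ as in \eqref{defalgdis} is an independence relation), and the relevant hypothesis for simplicity transfer is $T_0\subseteq T$ rather than the $\dcl_0$ side condition you mention; for (4) the paper takes $\ind^0$ to be the algebraic-disjointness relation \eqref{defalgdis} rather than thorn-forking directly, though for $\RCF$ these agree on substructures.
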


And we obtain novel results.

\begin{corollary}
\begin{enumerate}
    \item $\SDCF_{p,\infty}^\lambda$, the theory of separably differentially closed fields of characteristic $p$ and infinite differential degree of imperfection, is stable, and forking independence coincides with algebraic disjointness and $p$-disjointness in the language with the $\lambda$-functions.

    \item The model companion of an $\omega$-free PAC differential field of characteristic $0$ is NSOP$_1$.
\end{enumerate}
\end{corollary}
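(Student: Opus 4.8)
The plan is to deduce both assertions from the neostability-transfer theorems of the introduction, the real work being to present, in each case, the underlying theory of differential fields as an instance of the derivation-like framework of Definition~\ref{like}.

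For assertion (1), take $T_0 = \SCF_{p,\infty}^\lambda$, the theory of separably closed fields of characteristic $p$ and infinite degree of imperfection in the language of rings expanded by the $\lambda$-functions associated to a fixed (infinite) $p$-basis. This theory is complete, model complete, and stable (Wood), and its nonforking independence $\ind^0$ is the conjunction of algebraic disjointness and $p$-disjointness over the relevant base (Delon, Srour). Let $T = \DF_{p,\infty}^\lambda$ be the theory of differential fields in this language whose derivation is compatible with the $\lambda$-functions, and recall (or cite) that $T$ has a model companion $T_+ = \SDCF_{p,\infty}^\lambda$. I would then verify the hypotheses of Theorem~\ref{intro-neostability-transfers}(1): that $\ind^0$ is nonforking in $T_0$ (cited above); that $\dcl_0(M)\models T_0$ for every $M\models T_+$ (immediate, since the reduct of a separably differentially closed field to the $\lambda$-field language is already separably closed and closed under the $\lambda$-functions, hence a model of $T_0$); and that $T$ is derivation-like with respect to $(T_0,\ind^0)$. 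The last item is the crux: it rests on the fact that a derivation extends uniquely along any separably algebraic extension, and that when $K\ind^0_E L$ the relevant reduced quotient of $K\otimes_E L$ carries a unique derivation extending those on $K$ and $L$; these extensions automatically commute with the $\lambda$-functions, the latter being $\emptyset$-definable. Granting this, Theorem~\ref{intro-neostability-transfers}(1) gives that $\SDCF_{p,\infty}^\lambda$ is stable and that $\ind^+$ is nonforking, and unwinding $\ind^+$ from $\ind^0$ yields the stated description of forking.

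For assertion (2), let $(K,\delta)$ be a differential field of characteristic $0$ whose underlying field is $\omega$-free PAC, let $T$ be its theory in the language of differential rings (together with any constants making $T_0 := \operatorname{Th}(K)$ model complete), and let $T_+$ be the model companion of $T$, whose existence follows from the available model-companion results for differential fields with a suitably large underlying field. Then $T_0$ is complete, model complete, and NSOP$_1$ with $\ind^0$ equal to Kim-independence, and I would take $\ind^1 = \ind^{\alg}$ (algebraic disjointness): since Kim-nonforking over a model implies algebraic disjointness there, $\ind^0_M \implies \ind^1_M$ for every $M\models T_+$. Because ``$\omega$-free PAC'' is expressible by a scheme of $\forall\exists$-sentences (PAC: rational points on geometrically integral varieties; $\omega$-freeness: solvability of the relevant finite embedding problems), existentially closed models of $T$ remain $\omega$-free PAC, so $T_0\subseteq T_+$. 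Finally $T$ is derivation-like with respect to $(T_0,\ind^{\alg})$: the unique-extension and tensor-product properties of derivations are purely ring-theoretic, so the verification already carried out for $\DF_0$ over $(\ACF_0,\ind^{\alg})$ transfers once one checks that $\ind^{\alg}$ is an invariant ternary relation on $T_0$ with the properties demanded by Definition~\ref{like}. The transfer theorem for NSOP$_1$ stated in the introduction then gives that $T_+$ is NSOP$_1$ (with $\ind^+$ Kim-independence).

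In both cases the neostability transfer is handed to us by the theorems above; the delicate points are the two ``inputs''. The first is the existence of the model companion -- classical for $\SDCF_{p,e}$ with $e<\infty$, but requiring the appropriate (differential-)largeness input in the $\omega$-free PAC case and some care when $e=\infty$. The second is the verification of Definition~\ref{like} itself, which in positive characteristic forces one to track the distinction between algebraic and separably algebraic extensions and the behaviour of $p$-bases under the derivation; I expect this, in the $\SCF^\lambda$ setting, to be the main obstacle.
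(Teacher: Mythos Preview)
Your treatment of (1) is essentially the paper's: take $T_0=\SCF_{p,\infty}^\lambda$ with nonforking $\ind^0$ (algebraic disjointness together with $p$-disjointness), check that $\DF_p^\lambda$ is derivation-like via regularity of $K/E$ and linear disjointness, note $T_0\subseteq T_+$, and invoke the stability-transfer corollary. One small inaccuracy: the $\lambda$-functions in the infinite-imperfection setting are not ``associated to a fixed $p$-basis'' but are the $(n+1)$-ary functions $\lambda_{n,i}$ that take an arbitrary $n$-tuple as provisional $p$-basis; this matters because $\L_\lambda$-substructures must then be closed under \emph{all} such $\lambda$'s, which is exactly what makes the extension $K/E$ separable and feeds the regularity argument.

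For (2) there is a genuine gap. You propose to work in ``the language of differential rings together with any constants making $T_0$ model complete''. The paper explicitly flags that it is not known whether the theory of an $\omega$-free PAC field can be made model complete in a language of the form $\L_{\text{fields}}(C)$ for a set of constants $C$; your plan therefore rests on an assumption that may simply be false, and both the existence of $T_+$ (via the uniform-companion results you allude to, which require $\L_0=\L_{\text{fields}}(C)$) and the verification that $\ind^{\alg}$ is an independence relation (via very-slimness, which in the paper is deduced from largeness only in languages of that shape) would collapse with it. The paper's route avoids this: it passes to the Fried--Jarden expansion by the predicates $R_n$ (which encode regularity of extensions) to obtain model completeness, observes that the resulting theory is very $\L_{\text{fields}}(R_n)$-slim and hence algebraically bounded, and then invokes Fornasiero--Terzo on generic derivations in algebraically bounded structures to get the model companion. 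Only then does the NSOP$_1$-transfer corollary apply. Your $\forall\exists$ argument for $T_0\subseteq T_+$ is also unnecessary once one sets $T=T_0\cup\{\text{derivation axioms}\}$, since then $T_0\subseteq T\subseteq T_+$ is automatic.
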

Stability of $\SDCF_{p,\infty}^\lambda$ was established by Ino and the first author in \cite{InoLS23} by counting types; here we give a characterisation of forking.

\medskip

\noindent {\bf Conventions.} We assume that all our theories are closed under deductions.

\medskip

\noindent {\bf Acknowledgements.} The authors would like to thank Jan Dobrowolski and Amador Martin-Pizarro for several useful discussions that led to improvements of this paper and the anonymous referee of this paper for their very helpful comments that improved the clarity of our exposition and suggestion of a new class of examples.

\

\section{Preliminaries}

As discussed in the introduction, our definitions and proofs will rely on the notion of abstract independence relations and the fact that stability, simplicity, NSOP$_1$, and rosiness all have a theorem of the form of the Kim--Pillay theorem: one characterising the given property in terms of the existence of a ternary relation satisfying certain properties. In this section, we will collect the necessary material and use it freely.

We follow Adler's treatment \cite{Ad09} of independence relations. We fix a complete first-order theory $T_0$.

\begin{definition}\label{indprop}
A relation $\ind^0$ on triples of small subsets of a monster model $\UU_0$ of $T_0$ is called an independence relation if it is invariant under automorphisms and satisfies the following eight properties:
\begin{enumerate}
\item \emph{normality:} $A\ind^0_C B \implies A\ind^0_C BC$;
\item \emph{monotonicity:} $A\ind^0_C BD\implies A\ind^0_C B$;
\item \emph{base monotonicity:} $A\ind^0_C BD \implies A\ind^0_{CD} B$;
\item \emph{transitivity:} $A\ind^0_C B \text{ and } A\ind^0_B D \implies A\ind^0_C D \text{ for } C\subseteq B\subseteq D$;
\item \emph{symmetry:} $A\ind^0_C B \implies B\ind^0_C A$;
\item \emph{full existence:} for any $A,B,C$ there is $A'\equiv_C A$ with $A'\ind^0_C B$;
\item \emph{finite character:} if $A_0\ind^0_C B$ for all finite $A_0\subseteq A$ then $A\ind^0_C B$;
\item \emph{local character:} for any $A$ there is a cardinal $\kappa=\kappa(A)$ such that for any $B$ there is $C\subseteq B$ with $|C|< \kappa$ such that $A\ind^0_{C}B$.
\end{enumerate}
There are other properties that are generally of interest:
\begin{itemize}
\item \emph{existence:} for any $A$ and $C$ we have $A\ind^0_C C$;
\item \emph{extension:} if $A\ind^0_C B$ then for any $D$ there is $A'\equiv_{BC} A$ with $A'\ind^0_C BD$;
\item \emph{anti-reflexivity:}  if $a\ind^0_C a$ then $a\in \acl(C)$ (an independence relation is called strict if it satisfies anti-reflexivity);
\item \emph{chain local character:} for a finite tuple $a$ and a regular cardinal $\kappa>|T_0|$, for every continuous chain of models $(M_i)_{i<\kappa}$ with $|M_i|<\kappa$ there is $j<\kappa$ such that $a\ind^0_{M_j}\bigcup_{i<\kappa}M_i$;
\item \emph{independence theorem over $M$:} if $A_1\ind^0_M A_2$, $a_1\ind^0_M A_1$, $a_2\ind^0_M A_2$, and $a_1\equiv_M a_2$, then there is $a\models \tp(a_1/MA_1)\cup \tp(a_2/MA_2)$ with $a\ind^0_M A_1A_2$;
\item \emph{stationarity over $M$:} if $M\subseteq A$, $a\ind^0_M A$, $b\ind^0_M A$, and $a\equiv_M b$, then $a\equiv_A b$.
\end{itemize}

We say that \emph{monotonicity}, \emph{symmetry}, \emph{finite character}, \emph{existence}, or \emph{extension} holds over models if the property holds when $C$ is a small model of $T_0$, and we say that $\ind^0_M$ satisfies the property if it holds when $C = M$. The \emph{independence theorem} or \emph{stationarity} holds over models if the property holds when $M$ is a small model of $T_0$. We say that \emph{transitivity} holds over models if the property holds when $C$ and $B$ are small models of $T_0$.
\end{definition}

We collect the various theorems of the form of the Kim--Pillay theorem.

\begin{theorem}\label{neostability} \
\begin{enumerate}
\item [(i)] \cite{Kim13} The theory $T_0$ is stable if and only it admits an independence relation $\ind^0$ (that is, one satisfying (1)-(8) above) which satisfies stationarity over models. In this case $\ind^0$ coincides with forking independence.
\item [(ii)] \cite{KimPi97} The theory $T_0$ is simple if and only it admits an independence relation $\ind^0$ which satisfies the independence theorem over models. In this case $\ind^0$ coincides with forking independence.
\item [(iii)] \cite{CDHJR} The theory $T_0$ is NSOP$_1$ if and only if it admits an invariant ternary relation $\ind^0$ with chain local character and which over models satisfies monotonicity, transitivity, symmetry, finite character, existence, extension, and the independence theorem. In this case $\ind^0$ coincides with Kim-independence over models. 
\item [(iv)] \cite{Ad09} Suppose $T_0$ eliminates imaginaries. Then, $T_0$ is rosy if and only if it admits a strict independence relation.
\end{enumerate}
\end{theorem}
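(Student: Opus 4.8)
The plan is not to give a self-contained argument: each of the four parts is a known ``Kim--Pillay-type'' characterisation, so I would assemble them from the cited sources. The only genuine task on our side is bookkeeping --- checking that the hypotheses exactly as phrased in Definition~\ref{indprop} (whether a property is required to hold over arbitrary sets, over models, or in the chain form of local character) match the ones actually used in each reference.

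For (i) and (ii) I would follow a common template. In the forward direction one invokes classical stability theory, respectively simplicity theory: if $T_0$ is stable then nonforking independence satisfies (1)--(8) and, in addition, stationarity over models; if $T_0$ is simple then nonforking satisfies (1)--(8) and, in addition, the independence theorem over models. This is standard and recorded in \cite{Kim13} and \cite{KimPi97}. For the converse together with the canonicity clause, the argument to reproduce (again from \cite{Kim13}, \cite{KimPi97}) is: first show that any relation $\ind^0$ satisfying the listed axioms implies nonforking, using symmetry, monotonicity, base monotonicity, transitivity, extension and local character; then use stationarity over models, respectively the independence theorem over models, to upgrade this to equality of $\ind^0$ with nonforking independence; and finally observe that the mere existence of such a relation transfers local character, respectively the type-amalgamation property, to forking itself, which by the usual criteria makes $T_0$ stable, respectively simple.

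For (iii) I would cite the NSOP$_1$ analogue of the Kim--Pillay theorem from \cite{CDHJR}: if $T_0$ is NSOP$_1$, then Kim-independence over models is an invariant ternary relation with chain local character satisfying monotonicity, transitivity, symmetry, finite character, existence, extension and the independence theorem over models; conversely, any invariant relation with these properties is shown there to coincide with Kim-independence over models, and the existence of such a relation forces $T_0$ to be NSOP$_1$. For (iv) I would cite Adler's theorem \cite{Ad09}: assuming elimination of imaginaries, thorn-forking independence is a strict independence relation exactly when $T_0$ is rosy, the nontrivial direction being that the presence of any strict independence relation forces thorn-forking to have local character (the elimination of imaginaries being what lets one argue in $T_0$ rather than $T_0^{\mathrm{eq}}$).

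The step I expect to be the main --- and essentially only --- obstacle is the hypothesis-matching flagged above: making sure that ``over models'' is the correct localisation in each of (i)--(iv) as the statements appear in the literature, and that ``chain local character'' in (iii) really is the form of local character used in \cite{CDHJR} rather than property (8). Beyond this matching there is no new mathematical content to supply.
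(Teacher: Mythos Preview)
Your proposal is correct and matches the paper's approach exactly: the paper states this theorem purely as a compilation of cited results from \cite{Kim13}, \cite{KimPi97}, \cite{CDHJR}, and \cite{Ad09}, with no proof given beyond the citations themselves. Your additional bookkeeping about matching the precise hypotheses (over models versus over sets, chain local character versus ordinary local character) is sensible due diligence but goes beyond what the paper itself provides.
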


Finally, we define some notation that we will use freely throughout the next section.

\begin{definition}
Let $\L$ be a language, let $M$ and $N$ be two $\L$-structures, and let $X \subseteq M$ be some subset.
    \begin{enumerate}
        \item $M \leq_\L N$ means that $M$ is an $\L$-substructure of $N$;
        \item $\langle X \rangle_{\L}$ is the $\L$-structure generated by $X$ (inside $M$);
        \item $\diag^M_{\L}(X)$ is the quantifier-free $\L$-diagram of $X$ inside $M$; that is, the set of all quantifier free $\L(C)$-sentences true in $M$, where $C = \{c_x \colon x \in X \}$ is a set of new constant symbols and $M$ is expanded to an $\L(C)$-structure by interpreting $c_x$ as $x$.
    \end{enumerate}
\end{definition}

\

\section{Main results} \label{mainresults}

We fix the following data:

\begin{itemize}
\item $\L_0\subseteq \L$ are two (first-order) languages, possibly multi-sorted;
\item $T_0$ is a complete and model complete $\L_0$-theory equipped with an automorphism invariant ternary relation $\ind^0$, we denote by $\UU_0$ a monster model of $T_0$ and, unless otherwise stated, $\acl_0$ refers to model-theoretic algebraic closure taken with respect to the language $\L_0$ in $\UU_0$; and
\item $T$ is an $\L$-theory such that $T_0^\forall \subseteq T$.
\end{itemize}

\begin{definition} \label{like}\
We say that $T$ is derivation-like with respect to $(T_0,\ind^0)$ if whenever $A,B,C\models T^\forall$, with $C$ a common $\L$-substructure of $A$ and $B$, are such that $A, B\leq _{\L_0}\UU_0$, $\acl_0(C) \cap A = \acl_0(C) \cap B = C$, and $A\ind^0_C B$, we have that
\begin{enumerate}
\item [(i)] there exists $M\models T$ such that $M\leq_{\L_0} \UU_0$ and $A, B\leq_\L M$, and 
\item [(ii)] for any $M$ as in (i) and any $\L_0$-structure $D$ such that  
$$\langle A,B\rangle_{\L_0}\leq_{\L_0}D\leq_{\L_0} \acl_0(A,B)\cap M,$$ 
we have that $D\leq_{\L}M$ and, moreover, this $\L$-structure on $D$ is the unique one expanding its $\L_0$-structure, making it a model of $T^\forall$, and extending the $\L$-structures of $A$ and $B$.   
\end{enumerate}
\end{definition}

Note that part (i) of the definition is, in some sense, a strong form of independent amalgamation.

\begin{remark}\label{aclA-and-useforCCM} \ 
\begin{enumerate}
    \item Suppose $T$ is derivation-like with respect to $(T_0,\ind^0$) and $M\models T$ with $M\leq_{\L_0} \UU_0$. We note that if $A\leq_\L M$ is such that 
$$A\ind^0_{A} A,$$
then $\acl_0(A)\cap M\leq_\L M$. 
Indeed, taking $B$ and $C$ equal to $A$, part (ii) of the definition yields 
$$\acl_0(A)\cap M=\acl_0(A,B)\cap M\leq_\L M.$$

More generally, whenever $D$ is an $\L_0$-structure such that $A\leq_{\L_0}D\leq_{\L_0}\acl_0(A)\cap M$,
we then have that $D\leq_{\L} M$ and this $\L$-structure on $D$ is the unique one expanding its $\L_0$-structure, making it a model of $T^\forall$, and extending the $\L$-structure of $A$. 

\item We say that $T$ is \emph{almost} derivation-like with respect to $(T_0, \ind^0)$ if in condition (ii) of Definition~\ref{like} we restrict only to $A=B=C$. Some of the preliminary results of this section will also hold for almost derivation-like theories.

\item When $T_0 \subseteq T$, we may weaken condition (ii) by restricting to only those cases where $D$ is $\dcl_0$-closed: the results of this section will continue to hold.
\end{enumerate}
\end{remark}

\begin{example}\label{example-T0-in-T+}
We highlight the distinction made above in Remark~\ref{aclA-and-useforCCM}(3) between $T_0 \subseteq T$ and $T_0 \not \subseteq T$.

Let $T_0 = \ACF_p$ and $T = \SCF_{p,e}$ with $e>0$. Then $T_0^\forall \subseteq T$ but $T_0 \not \subseteq T$ since fields of positive degree of imperfection are not algebraically closed.

If $T_0 = \ACF_0$ and $T = \DCF_0$, then $T_0 \subseteq T$ since every differentially closed field is necessarily algebraically closed.
\end{example}

\medskip

The following assumptions will be in place throughout the rest of this section.

\begin{assumption} \label{theassumption}\
\begin{enumerate}
\item [(i)]From now on $T$ is a derivation-like theory with respect to $(T_0,\ind^0)$. 
\item [(ii)] We assume that $T$ has a model companion $T_+$ and that $T\subseteq T_+$. We fix a monster model $\UU_+$ of a completion of $T_+$.
Since $T_0^\forall\subseteq T_+$, without loss of generality we may assume that $\UU_+\leq_{\L_0}\UU_0$. $\acl_+$ refers to model-theoretic algebraic closure taken in $\UU_+$ (with respect to the language $\L$).
\item [(iii)] If $T_0\not\subseteq T_+$, we further assume that $T_0$ has quantifier elimination.
\end{enumerate}
\end{assumption}

\medskip

Let $\L_0^*$ be some language expanding $\L_0$, and set $\L^* = \L \cup \L_0^*$. Let $T^*_0$ be an expansion by definitions of $T_0$ to the language $\L_0^*$ (for instance, the Morleyisation of $T_0$). Also, expand $T$ and $T_+$ to $T^*$ and $T_+^{*}$, respectively, to the language $\L^*$ using the same definitions as for $T_0^*$.

\begin{remark} \label{Morleyisation} The following can be readily checked:
\begin{enumerate}
    \item $(T_0^*)^\forall \subseteq T^*$;
    \item $\ind^0$ is naturally an invariant ternary relation on $\UU_0$ as a model of $T_0^*$;
    \item $T^*$ is derivation-like with respect to $(T_0^*, \ind^0)$;
    \item $T_+^{*}$ is the model companion of $T^*$ and $T^*\subseteq T_+^{*}$; and
    \item $\UU_0$ and $\UU_+$ remain monster models of $T_0^*$ and $T_+^{*}$, respectively, and $\UU_+ \leq_{\L_0^*} \UU_0$.
\end{enumerate}
For (1), in particular to obtain $\UU_+ \leq_{\L_0^*} \UU_0$, we need to ensure that any new symbols of $\L_0^*$ have compatible interpretations in $M \leq_{\L_0} N$ for $M \models T$ and $N \models T_0$; that is, that for any $\L_0$-formula $\phi$ defining a new symbol we have
\[
\phi(M) = \phi(N) \cap M.
\]
If $T_0 \subseteq T$, then $M \preceq N$ as $T_0$ is model complete, and this is automatic. If $T_0 \not \subseteq T$, this can be ensured by assuming $T_0$ has quantifier elimination in the language $\L_0$. This is the basis for Assumption~\ref{theassumption}(iii).
\end{remark}

\begin{lemma}\label{diagram}
Assume $\ind^0$ satisfies full existence. Let $A \leq_{\L} \UU_+$. If $T_0^* \cup \diag_{\L_0^*}^{\UU_0}(A)$ is complete, then $T^{*}_+ \cup \diag_{\L^*}^{\UU_+}(A)$ is complete. 
\end{lemma}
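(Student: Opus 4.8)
The plan is to show completeness of $T^*_+ \cup \diag_{\L^*}^{\UU_+}(A)$ by a back-and-forth–style amalgamation argument: given two models of this theory, I want to embed them (over $A$) into a common model of $T_+^*$, whence completeness follows since $T_+^*$ is model complete. So let $M_1, M_2 \models T_+^*$ with a common $\L^*$-substructure $A$ (after identifying the copies of $A$); I may assume $M_1, M_2 \leq_{\L_0^*} \UU_0$. The goal is to find $N \models T_+^*$ with $\L^*$-embeddings of $M_1$ and $M_2$ over $A$.

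First I would arrange the independence hypothesis. Using full existence for $\ind^0$ in $\UU_0$ (and invariance), I can move $M_2$ by an $\L_0^*$-automorphism fixing $A$ to a copy $M_2'$ with $M_1 \ind^0_{A'} M_2'$ over a suitable base; the subtlety is what base to use — I want $\acl_0(A) \cap M_1 = \acl_0(A) \cap M_2' = $ (that base). The cleanest route is to first pass to $\acl_0(A) \cap M_i$ inside each $M_i$: by Remark~\ref{aclA-and-useforCCM}(1) (applied with the trivial instance $A \ind^0_A A$, which holds here because $\diag_{\L_0^*}^{\UU_0}(A)$ being part of a complete theory forces the relevant instances — more carefully, one uses that $A \leq_\L \UU_+$ and the derivation-like hypothesis), these intersections carry a canonical $\L$-structure extending that of $A$ and lying inside $M_i$, and they have the same $\L_0^*$-type over $A$ because $T_0^* \cup \diag_{\L_0^*}^{\UU_0}(A)$ is complete; hence they are $\L^*$-isomorphic over $A$. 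Replacing $A$ by this common algebraic-closure-intersection, I may assume $\acl_0(A) \cap M_1 = \acl_0(A) \cap M_2 = A$. Now apply full existence to get $M_2' \equiv_A M_2$ (in the sense of $\L^*$, using completeness again to transport the $\L$-structure) with $M_1 \ind^0_A M_2'$ and still $\acl_0(A) \cap M_2' = A$.

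Now the triple $M_1, M_2', A$ satisfies the hypotheses of Definition~\ref{like} (they are models of $(T^*)^\forall$, sit inside $\UU_0$ as $\L_0^*$-substructures, have the required algebraic-closure condition, and are $\ind^0$-independent over $A$). By part (i) of the derivation-like definition there is $M \models T^*$ with $M \leq_{\L_0^*} \UU_0$ and $M_1, M_2' \leq_{\L^*} M$. Finally, since $T_+^*$ is the model companion of $T^*$, embed $M$ into a model $N \models T_+^*$; composing, we get $\L^*$-embeddings of $M_1$ and $M_2$ into $N$ over $A$. Because $T_+^*$ is model complete, these embeddings are elementary, so $M_1$ and $M_2$ have the same $\L^*$-theory over $A$; as $M_1, M_2$ were arbitrary, $T^*_+ \cup \diag_{\L^*}^{\UU_+}(A)$ is complete.

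The main obstacle I anticipate is the bookkeeping around algebraic closures and the transport of $\L$-structure: one must be careful that "$T_0^* \cup \diag_{\L_0^*}^{\UU_0}(A)$ complete" really does pin down the $\L^*$-isomorphism type of $\acl_0(A)\cap M_i$ over $A$, which requires invoking the uniqueness clause in Definition~\ref{like}(ii) (or Remark~\ref{aclA-and-useforCCM}(1)) to know the $\L$-structure on that intersection is determined by the $\L_0^*$-structure together with the $\L$-structure of $A$. Relatedly, one needs the instance $A \ind^0_A A$ (equivalently, that $\ind^0$ has existence over the relevant sets, or a direct argument that $A \leq_\L \UU_+$ forces $\acl_0(A) \cap \UU_+$ to be $\L$-closed) to even start the reduction; here the hypothesis $A \leq_\L \UU_+$ together with $\UU_+ \models T$ and the derivation-like property does the job, but making this precise is where the care is needed. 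Everything else is a routine application of model-companion and model-completeness formalities.
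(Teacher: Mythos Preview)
Your proposal is correct and follows essentially the same approach as the paper: use full existence to make the two models $\ind^0$-independent over $A$, reduce to $A$ relatively $\acl_0$-closed via Remark~\ref{aclA-and-useforCCM}(1), apply part~(i) of derivation-like to amalgamate into a model of $T^*$, and finish by model completeness of $T_+^*$ (the paper simply takes one of the two models to be an $\L^*$-elementary substructure of $\UU_+$ rather than treating both symmetrically). Two small clarifications: the step ``I may assume $M_1, M_2 \leq_{\L_0^*} \UU_0$'' is exactly where the hypothesis that $T_0^* \cup \diag_{\L_0^*}^{\UU_0}(A)$ is complete gets used (embed $M_i$ into a model of $T_0^*$ and then into $\UU_0$ over $A$), and the instance $A \ind^0_A A$ follows directly from full existence, not from completeness of the diagram.
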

\begin{proof}
Let $K \models T_+^{*} \cup \diag_{\L^*}^{\UU_+}(A)$. We will show that $K \equiv_A \UU_+$ as $\L^*$-structures. First note that $K \models (T_0^*)^\forall$, and hence it $\L_0^*$-embeds in some $K' \models T_0^*$. Now by completeness of $T_0^* \cup \diag_{\L_0^*}^{\UU_0}(A)$, $K'$ $\L_0^*$-embeds inside $\UU_0$ over $A$. Let $L$ be an $\L^*$-elementary substructure of $\UU_+$ containing $A$. Use full existence to find a copy of $L$ with $L' \ind^0_A K$ and $\tp^{\UU_0}_{\L_0^*}(L'/A) = \tp^{\UU_0}_{\L_0^*}(L/A)$. This last fact means that $L$ induces an isomorphic $\L^*$-structure on $L'$. The partial $\L^*_0$-elementary map $A \to A$ from $K$ to $L'$ extends to a partial $\L^*_0$-elementary map $\acl_{\L_0^*}^{\UU_0}(A) \cap K \to \acl_{\L_0^*}^{\UU_0}(A) \cap L'$. By Remark~\ref{aclA-and-useforCCM}(1), this map must be an $\L^*$-isomorphism (note that full existence yields $A\ind^0_{A} A$). So we may assume that $A$ is relatively $\acl_{\L^*}$-closed in $K$ and $L'$.

Since $T^*$ is derivation-like, there is some $M \models T^*$ such that $M \leq_{\L_0^*} \UU_0$ and $K,L' \leq_{\L^*} M$. Since $T_+^{*}$ is the model companion of $T^*$, there is some $N \models T_+^{*}$ extending $M$ as an $\L^*$-structure. Now, $T_+^{*}$ is model complete, so $L' \preceq N \succeq K$ as $\L^*$-structures. Finally $K \equiv_A N \equiv_A L' \equiv_A L \equiv_A \UU_+$.
\end{proof}

We collect some immediate corollaries.

\begin{corollary}\label{corollary-qe-transfers}
Assume $\ind^0$ satisfies full existence. Suppose $T^*_0$ is the model companion of some inductive $\L_0^*$-theory $S$. Then
\begin{enumerate}
    \item $T_+ \cup S$ is the model companion of $T \cup S$;
    \item if $T^*_0$ is the model completion of $S$, then $T_+ \cup S$ is the model completion of $T \cup S$; and
    \item if $T^*_0$ has quantifier elimination, then $T^*_+$ has quantifier elimination.
\end{enumerate}
\end{corollary}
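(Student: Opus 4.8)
The plan is to reduce all three parts to Lemma~\ref{diagram} together with the standard criteria for quantifier elimination, model companions, and model completions in terms of completeness of diagrams of (sub)structures. For (3): recall that $T_+^*$ eliminates quantifiers if and only if for every $\L^*$-substructure $A$ of a model of $T_+^*$ the theory $T_+^* \cup \diag_{\L^*}^{\UU_+}(A)$ is complete. Since $T_+^*$ is a model companion (Remark~\ref{Morleyisation}(4)), it is model complete and hence inductive, so we may assume such an $A$ is a small $\L^*$-substructure of $\UU_+$; via $\UU_+ \leq_{\L_0^*} \UU_0$ this also realises $A$ as an $\L_0^*$-substructure of $\UU_0$. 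Quantifier elimination of $T_0^*$ then gives that $T_0^* \cup \diag_{\L_0^*}^{\UU_0}(A)$ is complete, so Lemma~\ref{diagram} yields completeness of $T_+^* \cup \diag_{\L^*}^{\UU_+}(A)$, as required.

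For (1) and (2): note first that $T \cup S \subseteq T_+ \cup S$, so every model of $T_+ \cup S$ is a model of $T \cup S$; it thus remains to establish (a) every model of $T \cup S$ embeds into a model of $T_+ \cup S$, (b) $T_+ \cup S$ is model complete, and --- for the model completion statement (2) --- (c) for every $A \models T \cup S$ the theory $(T_+ \cup S) \cup \diag_{\L^*}^{\UU_+}(A)$ is complete. For (b) one checks that a model of $T_+ \cup S$ is in fact a model of $T_+^*$ (a routine verification about the interpretations of the symbols of $\L_0^* \setminus \L_0$, using Assumption~\ref{theassumption}(iii) or, when $T_0 \subseteq T_+$, Remark~\ref{Morleyisation}), and then appeals to model completeness of $T_+^*$. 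For (c), given $A \models T \cup S$, its $\L_0^*$-reduct is a model of $S$, so the model-completion hypothesis gives that $T_0^* \cup \diag_{\L_0^*}^{\UU_0}(A)$ is complete; Lemma~\ref{diagram} then yields completeness of $T_+^* \cup \diag_{\L^*}^{\UU_+}(A)$, whence of $(T_+ \cup S) \cup \diag_{\L^*}^{\UU_+}(A)$, and consistency of the latter is exactly (a) applied to $A$. The substantive point is (a): starting from $M \models T \cup S$, I would build an increasing chain of $\L^*$-structures, alternately enlarging the $\L$-reduct to a model of $T_+$ (possible since $T_+$ is the model companion of $T$) and the $\L_0^*$-reduct to a model of $T_0^*$, hence of $S$ (possible since $T_0^*$ is the model companion of the inductive theory $S$); the union of the chain is the desired model of $T_+ \cup S$.

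The step I expect to be the main obstacle is (a), and within it the management of the two expansions $\L_0 \subseteq \L$ and $\L_0 \subseteq \L_0^*$ over their common reduct throughout the chain: each time the universe is enlarged to satisfy $S$ one must re-supply the ``operator'' part of the $\L$-structure on the new elements, and at every stage one must check that the $\acl_0$-disjointness hypotheses of Definition~\ref{like} hold and that the chain does not escape $\UU_0$. This is precisely the configuration that part (i) of Definition~\ref{like} --- the ``strong independent amalgamation'' --- together with the uniqueness clause (ii) and Remark~\ref{aclA-and-useforCCM}, is designed to control, with Assumption~\ref{theassumption}(iii) keeping everything inside $\UU_0$; once this bookkeeping is set up, each individual amalgamation step is routine.
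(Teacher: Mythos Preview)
Your approach to (3), and to (b) and (c) within (1)–(2), is correct and is exactly the reduction to Lemma~\ref{diagram} that the cited Tressl/Mohamed argument uses: completeness of $T_0^*\cup\diag_{\L_0^*}(A)$ (from QE, resp.\ from the model-completion hypothesis) feeds into Lemma~\ref{diagram}, and model completeness of $T_+\cup S$ reduces to that of $T_+^*$ via the observation that any model of $T_+\cup S$ satisfies the defining axioms and is therefore a model of $T_+^*$.

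The one place your plan does not go through as written is (a). The derivation-like axiom is not the right tool for ``re-supplying the operator part'': Definition~\ref{like}(i)–(ii) amalgamates \emph{two} $T^\forall$-models that are $\ind^0$-independent over a common, relatively $\acl_0$-closed $\L$-substructure; it does not let you extend a single $\L$-structure along an arbitrary $\L_0^*$-extension. At the step where you enlarge $M_k|_{\L_0^*}$ to a model $K$ of $T_0^*$, the object $K$ is only an $\L_0^*$-structure, so there is no second $T^\forall$-model to amalgamate with, and Remark~\ref{aclA-and-useforCCM} only reaches $\acl_0(M_k)$, not $K$. In the concrete settings of Tressl and Mohamed this step is handled by the separate fact that derivations (or $\D$-operators) extend along field extensions --- a property not encoded in Definition~\ref{like}.

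A cleaner route, at least when $T_0\subseteq T_+$, sidesteps the chain entirely: any model $M$ of $T\cup S$ is already a model of $T^*$ (the interpretations of the new $\L_0^*$-symbols are forced, exactly as in the justification of Remark~\ref{Morleyisation}(1)); by Remark~\ref{Morleyisation}(4) it embeds into some $N\models T_+^*$; and since $S$ is inductive with model companion $T_0^*$ one has $S\subseteq T_0^*\subseteq T_+^*$, so $N\models T_+\cup S$ directly.
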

\begin{proof}
This is precisely the analogous result of Theorem~7.2 of \cite{Tre05}. The argument is given in more detail in Theorem~3.9 of \cite{Moha23}.
\end{proof}

As a result, we now {\bf assume} in all cases that $T_0$ has quantifier elimination -- if $T_0 \subseteq T$, then we Morleyise and use Remark~\ref{Morleyisation}; if $T_0 \not \subseteq T$, we assumed in Assumption~\ref{theassumption}(iii) that $T_0$ has quantifier elimination in $\L_0$. Hence by Corollary~\ref{corollary-qe-transfers}, $T_+$ also has quantifier elimination.

\begin{lemma}\label{acl}
Assume $\ind^0$ satisfies monotonicity, symmetry, full existence, and anti-reflexivity. Then, for any $A\subset  \UU_+$, we have
$$\acl_+(A)=\acl_0(\langle A\rangle_\L)\cap \UU_+.$$
\end{lemma}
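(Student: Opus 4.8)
The plan is to prove the two inclusions separately. The easy inclusion is $\acl_+(A) \subseteq \acl_0(\langle A\rangle_\L)\cap\UU_+$: since $T_0$ has quantifier elimination (which we now assume in all cases), any $\L$-definable set with parameters in $\langle A\rangle_\L$ that is $\acl_+$-small is in particular a subset of $\UU_+$, and I want to argue it is $\acl_0$-algebraic over $\langle A\rangle_\L$. The cleanest way is the contrapositive: if $b \in \UU_+ \setminus \acl_0(\langle A\rangle_\L)$, I produce infinitely many $\L$-conjugates of $b$ over $A$ inside $\UU_+$, showing $b\notin\acl_+(A)$. To do this I would take $A' := \langle A\rangle_\L$ (so $A'\leq_\L\UU_+$ and $A'$ has the same $\acl_+$ and generates the same $\L_0$-algebraic closure), and first pass to $A'' := \acl_0(A')\cap\UU_+$, which by Remark~\ref{aclA-and-useforCCM}(1) is an $\L$-substructure of $\UU_+$ — here I use that full existence gives $A'\ind^0_{A'}A'$. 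Then $b\notin A''$, and I can find an $\L_0$-automorphism of $\UU_0$ fixing $A''$ pointwise and moving $b$; iterating (or using that $\qftp_{\L_0}(b/A'')$ has infinitely many realisations since $b\notin\acl_0(A'')=\acl_0(A')$) gives an infinite set of $\L_0$-conjugates $\{b_i\}$ of $b$ over $A''$. Each $b_i$ together with $A''$ generates an $\L_0$-structure inside $\UU_0$ which, being $\L_0$-isomorphic to $\langle A'',b\rangle_{\L_0}$, carries a (unique, by Remark~\ref{aclA-and-useforCCM}(1) applied inside a suitable model of $T$) $\L$-structure extending that of $A''$; amalgamating into a model of $T$ and then into a model $N$ of $T_+$ (using that $T^*$ is derivation-like and $T_+$ is the model companion), model completeness of $T_+$ gives $b_i\equiv^{\UU_+}_{A} b$ for each $i$. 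Hence $b\notin\acl_+(A)$, as desired.

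For the reverse inclusion $\acl_0(\langle A\rangle_\L)\cap\UU_+ \subseteq \acl_+(A)$, I again set $A':=\langle A\rangle_\L$ and $A'':=\acl_0(A')\cap\UU_+$, which is an $\L$-substructure of $\UU_+$ by Remark~\ref{aclA-and-useforCCM}(1). It suffices to show every $b\in A''$ has finite orbit under $\Aut(\UU_+/A)$. Given such a $b$, its $\L_0$-type over $A'$ is algebraic, so it has only finitely many $\L_0$-conjugates over $A'$; I must show its $\L$-orbit over $A$ is no bigger. So let $\sigma\in\Aut(\UU_+/A)$. Then $\sigma$ restricts to an $\L$-isomorphism $A''\to\sigma(A'')$, and in particular an $\L_0$-embedding; since $b\in\acl_0(A')$ and $\sigma$ fixes $A'$, we get $\sigma(b)\in\acl_0(A')$, so $\sigma(b)$ lies among the finitely many $\L_0$-conjugates of $b$ over $A'$. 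Thus the $\L$-orbit of $b$ over $A$ is finite, giving $b\in\acl_+(A)$.

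The main obstacle I anticipate is the bookkeeping in the first inclusion: making sure that the $\L_0$-conjugates $b_i$ of $b$ over $A''$ genuinely assemble into a configuration to which Definition~\ref{like} applies — i.e. verifying the hypotheses $A''\ind^0_{A''}A''$ (from full existence, possibly via symmetry and monotonicity to massage it into the stated form), $\acl_0(A'')\cap A'' = A''$ (which holds because $A'' = \acl_0(A')\cap\UU_+$ and $A'\subseteq\UU_+$, so $A''$ is relatively $\acl_0$-closed in $\UU_+$), and then arranging the amalgam of $A''$ with each $\langle A'',b_i\rangle_{\L_0}$ inside a model of $T$ and then $T_+$, after which model completeness does the work. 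A secondary subtlety is the very first reduction, replacing $A$ by $\langle A\rangle_\L$: one should note $\acl_+(A) = \acl_+(\langle A\rangle_\L)$ and $\acl_0(\langle A\rangle_\L) = \acl_0(\langle\langle A\rangle_\L\rangle_\L)$, so no generality is lost. Everything else is routine once quantifier elimination in both $T_0$ and $T_+$ is invoked to reduce "same type" to "same $\L_0$- (resp. $\L$-) quantifier-free type over the relevant parameters."
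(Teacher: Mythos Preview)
You have the two inclusions reversed in difficulty. The containment $F:=\acl_0(\langle A\rangle_\L)\cap\UU_+\subseteq\acl_+(A)$ is the easy one: since $T_0$ has quantifier elimination and $\L_0\subseteq\L$, any finite $\L_0$-definable set over $\langle A\rangle_\L$ is in particular a finite $\L$-definable set over $A$. Your argument for this direction is correct but longer than necessary (the paper does it in one line).

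The genuine content is in $\acl_+(A)\subseteq F$, and here your sketch has two real gaps. First, Remark~\ref{aclA-and-useforCCM}(1) cannot equip $\langle A'',b_i\rangle_{\L_0}$ with an $\L$-structure: that remark applies only to $\L_0$-structures $D$ with $D\subseteq\acl_0(A'')\cap M$, whereas $b_i\notin\acl_0(A'')$ by hypothesis. You must instead work with an $\L$-closed object (say a small $K\preceq_\L\UU_+$ containing $A''$ and $b$) and transport its $\L$-structure along an $\L_0$-isomorphism. Second, and more importantly, you never invoke anti-reflexivity, and without it the argument collapses: once you use full existence to produce a copy $K'$ of $K$ with $K'\ind^0_{A''}K$, nothing prevents the image $b'\in K'$ of $b$ from equalling $b$ itself. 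Ruling this out is precisely the implication $b\ind^0_{A''}b\Rightarrow b\in\acl_0(A'')$. The same obstruction recurs at every stage of your proposed iteration, and the iterated amalgamation has its own difficulty (one needs $\acl_0(A'')\cap M=A''$ for each intermediate $M\models T$, which is not automatic).

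The paper's proof avoids the inductive bookkeeping altogether. Starting from $a\in\acl_+(A)$, it takes a \emph{single} copy $L$ of a small $K\preceq_\L\UU_+$ (chosen to contain all realisations of $\tp_+(a/F)$) with $L\ind^0_F K$, amalgamates $K$ and $L$ once via the derivation-like axiom, extends to $N\models T_+$, and embeds $N$ back into $\UU_+$ over $K$. The image $\phi(\sigma(a))$ is then a realisation of $\tp_+(a/F)$, hence lies in $K$, but it also lies in the image $L'$ of $L$; since $L'\ind^0_F K$, monotonicity and symmetry give $\phi(\sigma(a))\ind^0_F\phi(\sigma(a))$, and anti-reflexivity forces $\phi(\sigma(a))\in\acl_0(F)\cap\UU_+=F$. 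As the composite fixes $F$, one concludes $a\in F$.
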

\begin{proof}

By full existence, we have that $A\ind^0_A A$, and so by Remark~\ref{aclA-and-useforCCM}(1) we have that
$$F:=\acl_0(\langle A\rangle _\L)\cap \UU_+$$ 
is an $\L$-substructure of $\UU_+$. As $T_0$ has quantifier elimination, we get $F\subseteq \acl_+(A)$. For the other containment, consider $a\in \acl_+(A)$. Let $K$ be an elementary $\L$-substructure of $\UU_+$ containing all (finitely many) realisations of $\tp_+(a/F)$. By full existence, there is an $\L_0$-substructure $L$ of $\UU_0$ such that $L\ind^0_F K$ and $\tp_0(L/F)=\tp_0(K/F)$. The latter induces an $\L$-structure on $L$, via some $\sigma\in \Aut_{\L_0}(\UU_0/F)$ with $L=\sigma(K)$, making $L$ a model of $T_+$ and an $\L$-extension of $F$. Since $T$ is derivation-like, there is $M\models T$ with $M\leq_{\L_0}\UU_0$ such that $K$ and $L$ are $\L$-substructures of $M$. Let $N\models T_+$ be an $\L$-extension of $M$. Since $T_+$ is model complete and $K\models T_+$ is a common substructure of $N$ and $\UU_+$, there is an elementary $\L$-embedding $\phi:N\to \UU_+$ over $K$. Let $L'=\phi(L)$. We first note that 
$$\tp_+^{\UU_+}(a/F)=\tp_+^{K}(a/F)=\tp_+^L(\sigma(a)/F)=\tp_+^N(\sigma(a)/F)=\tp_+^{\UU_+}(\phi(\sigma(a))/F)$$
and so $\phi(\sigma(a))\in K$ (as $K$ contains all realisations of $\tp_{+}^{\UU_+}(a/F)$).

We now claim that $\tp_0(L/K)=\tp_0(L'/K)$. First note that
$$\qftp_0^{\UU_0}(L/K)=\qftp_0^{M}(L/K)=\qftp_0^{N}(L/K)=\qftp_0^{\UU_+}(L'/K)=\qftp_0^{\UU_0}(L'/K).$$
Since $T_0$ has quantifier elimination, it follows that $\tp_0(L/K)=\tp_0(L'/K)$.

Now, by invariance, we have $L'\ind^0_F K$. Then, monotonicity and symmetry imply that $\phi(\sigma(a))\ind^0_F \phi(\sigma(a))$. By anti-reflexivity, $\phi(\sigma(a))\in \acl_0(F)\cap \UU_+=F$. Since $\phi\circ \sigma$ fixes $F$ pointwise, we get that $a\in F$, as desired.

\end{proof}

We now observe that in derivation-like theories with $T_0\subseteq T_+$ we have a natural description of $\dcl$.

\begin{lemma}\label{dcl}
Assume $\ind^0$ satisfies monotonicity, symmetry, full existence, and anti-reflexivity. In addition, assume that $T_0\subseteq T_+$. Then, for any $A\subset  \UU_+$, we have
$$\dcl_+(A)=\dcl_0(\langle A\rangle_\L).$$
\end{lemma}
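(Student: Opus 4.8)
The statement to prove is Lemma~\ref{dcl}: under the stated hypotheses on $\ind^0$ and with $T_0 \subseteq T_+$, we have $\dcl_+(A) = \dcl_0(\langle A\rangle_\L)$ for any $A \subset \UU_+$. The natural approach is to mirror the proof of Lemma~\ref{acl} but replace algebraic closure with definable closure, exploiting that $T_0 \subseteq T_+$ (so that $\UU_+$, and the various intermediate models $M, N$ that appear, are all models of $T_0$, not merely of $T_0^\forall$).

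\textbf{Easy containment.} First I would handle $\dcl_0(\langle A\rangle_\L) \cap \UU_+ \subseteq \dcl_+(A)$ — in fact one wants to show $\dcl_0(\langle A\rangle_\L) \subseteq \dcl_+(A)$, so one should first check that $\dcl_0(\langle A\rangle_\L)$ is already contained in $\UU_+$; but since $T_0 \subseteq T_+$ and $\UU_+ \models T_0$ is a model, $\dcl_0$ of a subset of $\UU_+$ computed in $\UU_0$ lands in $\UU_+$ (as $\UU_+ \preceq \UU_0$ as $\L_0$-structures, $T_0$ being model complete). Then, since $T_0$ has quantifier elimination (which we have assumed throughout, per the remark following Corollary~\ref{corollary-qe-transfers}), any element of $\dcl_0(\langle A\rangle_\L)$ is $\L_0$-definable over $\langle A\rangle_\L$, hence $\L$-definable over $A$, hence in $\dcl_+(A)$.

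\textbf{Hard containment.} The substance is $\dcl_+(A) \subseteq \dcl_0(\langle A\rangle_\L)$. Set $F := \dcl_0(\langle A\rangle_\L)$; by Remark~\ref{aclA-and-useforCCM}(1) (using $A\ind^0_A A$ from full existence, noting $F \subseteq \acl_0(\langle A\rangle_\L) \cap \UU_+$ which is an $\L$-substructure of $\UU_+$) one checks $F$ is an $\L$-substructure of $\UU_+$. Take $a \in \dcl_+(A)$; then $a$ is the \emph{unique} realisation of $\tp_+(a/F)$. Now run the same construction as in Lemma~\ref{acl}: pick an elementary $\L$-substructure $K \preceq \UU_+$ containing $a$ (here, unlike in Lemma~\ref{acl}, we do not need $K$ to contain "all realisations" — there is only one — but we do still want $K$ to be an elementary substructure so that the model-companion argument goes through); use full existence to get $L$ an $\L_0$-substructure of $\UU_0$ with $L \ind^0_F K$ and $\tp_0(L/F) = \tp_0(K/F)$, inducing an $\L$-structure on $L$ via $\sigma \in \Aut_{\L_0}(\UU_0/F)$ with $L = \sigma(K)$, making $L \models T_+$; since $T$ is derivation-like, amalgamate $K, L$ inside some $M \models T$ with $M \leq_{\L_0}\UU_0$, pass to $N \models T_+$ extending $M$, and since $T_+$ is model complete get an elementary $\L$-embedding $\phi: N \to \UU_+$ over $K$, with $L' := \phi(L)$. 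The chain of type equalities $\tp_+^{\UU_+}(a/F) = \tp_+^K(a/F) = \tp_+^L(\sigma(a)/F) = \tp_+^N(\sigma(a)/F) = \tp_+^{\UU_+}(\phi(\sigma(a))/F)$ shows $\phi(\sigma(a))$ realises $\tp_+(a/F)$, so by uniqueness $\phi(\sigma(a)) = a$. On the other hand, exactly as in Lemma~\ref{acl} (q.e.\ of $T_0$ plus the quantifier-free type computation along $M, N, \UU_+$) one gets $\tp_0(L/K) = \tp_0(L'/K)$, and hence by invariance $L' \ind^0_F K$; then monotonicity and symmetry give $\phi(\sigma(a)) \ind^0_F \phi(\sigma(a))$, i.e.\ $a \ind^0_F a$, so by anti-reflexivity $a \in \acl_0(F) \cap \UU_+$. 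This alone would only give $a \in \acl_0(F)$, not $a \in F = \dcl_0(\langle A\rangle_\L)$; so the final — and I expect most delicate — step is to upgrade "algebraic over $F$" to "definable over $F$" using the uniqueness of $a$. The point is that $\phi \circ \sigma$ fixes $F$ pointwise and sends $a$ to $a$; but I would instead argue that if $a \in \acl_0(F)$ had a conjugate $a' \neq a$ over $F$ in $\UU_+$, then by q.e.\ of $T_0$ and the fact that $T_0 \subseteq T_+$, an $\L_0(F)$-automorphism moving $a$ to $a'$ would (via the derivation-like amalgamation/uniqueness in Remark~\ref{aclA-and-useforCCM}(1), applied to $\acl_0(F)\cap\UU_+ \leq_\L \UU_+$) be an $\L(F)$-elementary map, contradicting $a \in \dcl_+(A)$. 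Making this last contradiction precise — essentially showing that the $\L$-structure on $\acl_0(F) \cap \UU_+$ is rigid over $F$ in the appropriate sense — is where the main care is needed; everything before it is a routine adaptation of Lemma~\ref{acl}.
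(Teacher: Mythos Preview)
Your amalgamation step has a genuine gap. To invoke part (i) of the derivation-like axiom for $K$ and $L$ over $F$, you need $\acl_0(F)\cap K = F$. But with $F=\dcl_0(\langle A\rangle_\L)$ and $K\preceq_\L\UU_+$ containing $\langle A\rangle_\L$, one has $\acl_0(F)\cap K=\acl_0(\langle A\rangle_\L)\cap K=\acl_+(A)$ (the last equality by Lemma~\ref{acl} together with $K\preceq_\L\UU_+$), and $\acl_+(A)$ is typically strictly larger than $\dcl_0(\langle A\rangle_\L)$. So the derivation-like hypothesis fails and you cannot amalgamate over $F$ as written. You could repair this by taking the base to be $\acl_+(A)$ instead, but then the conclusion $a\in\acl_0(\acl_+(A))$ is vacuous, since $a\in\dcl_+(A)\subseteq\acl_+(A)$ already.

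More to the point, the entire amalgamation argument is a detour: your ``final delicate step'' is the whole proof. The paper does not re-run Lemma~\ref{acl}; it uses it as a black box and argues directly that every $\sigma\in\Aut_{\L_0}(\UU_0/\langle A\rangle_\L)$ fixes $a$. The key observation is that $T_0\subseteq T_+$ gives $\UU_+\preceq_{\L_0}\UU_0$, so $\sigma(\dcl_+(A))\subseteq\sigma(\acl_0(\langle A\rangle_\L))=\acl_0(\langle A\rangle_\L)\subseteq\UU_+$. Then Remark~\ref{aclA-and-useforCCM}(1) applies to $\sigma(\dcl_+(A))$ (an $\L_0$-structure sandwiched between $\langle A\rangle_\L$ and $\acl_0(\langle A\rangle_\L)\cap\UU_+$): the $\L$-structure it carries inside $\UU_+$ is the unique one extending that on $\langle A\rangle_\L$, so $\sigma|_{\dcl_+(A)}$ is an $\L$-isomorphism onto its image. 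By quantifier elimination for $T_+$ this is a partial $\L$-elementary map of $\UU_+$ fixing $A$, hence extends to some $\rho\in\Aut_\L(\UU_+/A)$, and $a\in\dcl_+(A)$ forces $\sigma(a)=\rho(a)=a$. No anti-reflexivity, no $\ind^0$-manipulation, no model-building is needed for the hard containment beyond what is already encapsulated in Lemma~\ref{acl} and Remark~\ref{aclA-and-useforCCM}(1).
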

\begin{proof}
As $T_0 \subseteq T_+$, we have
$$\dcl_0(\langle A\rangle_\L)\subseteq \dcl_+(A).$$
For the other containment, let $a\in \dcl_+(A)$ and let $\sigma$ be an $\L_0$-automorphism of $\UU_0$ fixing $\langle A\rangle_\L$ pointwise. We aim to show that $\sigma(a)=a$. Note that the assumption $T_0\subseteq T_+$ implies $\UU_+\preceq_{\L_0}\UU_0$; this together with Lemma~\ref{acl} yields
$$\sigma(\dcl_+(A))\leq_{\L_0} \sigma(\acl_0(\langle A\rangle_{\L})=\acl_0(\langle A\rangle_{\L})\leq_{\L_0} \UU_+.$$
Because $T$ is derivation-like, by Remark~\ref{aclA-and-useforCCM}(1), we have that $\sigma(\dcl_+(A))\leq_{\L}\UU_+$ and this is the unique $\L$-structure expanding its $\L_0$-structure, making it a model of $T^\forall$, and extending $\langle A\rangle_{\L}$. It follows from this and the fact that $T_+$ has quantifier elimination (by Corollary~\ref{corollary-qe-transfers}(3)), that $\sigma$ restricted to $\dcl_+(A)$ is a partial $\L$-elementary map of $\UU_+$. Thus, we may extend this restriction to an automorphism $\rho$ of $\UU_+$ (which fixes $A$). But then, as $a\in \dcl_+(A)$, we have that $a=\rho(a)=\sigma(a)$.
\end{proof}

\begin{remark}\label{almostder}
    We note that in the proofs of Lemmas~\ref{diagram},~\ref{acl}, and~\ref{dcl}, condition (ii) of derivation-like was only used when $A=B=C$. Namely, we applied Remark~\ref{aclA-and-useforCCM}(1), and so all results stated so far hold when $T$ is almost derivation-like with respect to $(T_0,\ind^0)$ in the sense of Remark~\ref{aclA-and-useforCCM}(2).
\end{remark}

\begin{definition}
Define the following relation on triples of small subsets of $\UU_+$:
\[
A \ind^+_C B \iff \acl_+(AC) \ind^0_{\acl_+(C)} \acl_+(BC).
\]
\end{definition}

\medskip

The following provides a detailed description of how independence properties of $\ind^0$ transfer to $\ind^+$.

\begin{theorem}\label{thm-ind-properties-transfer}
\begin{enumerate}
    \item $\ind^+$ is invariant and normal; 
    \item if $\ind^0$ satisfies any of monotonicity, symmetry, finite character, then so does $\ind^+$;
    \item if $\ind^0$ is transitive and monotone, then $\ind^+$ is transitive;
    \item if $\ind^0$ satisfies base monotonicity, finite character, and local character, then $\ind^+$ has local character;
    \item if $\ind^0$ satisfies normality, monotonicity, base monotonicity, transitivity, symmetry, and full existence, then $\ind^+$ satisfies base monotonicity;
    \item if $\ind^0$ satisfies monotonicity, symmetry, and full existence, then $\ind^+$ satisfies full existence;
    \item if $\ind^0$ satisfies monotonicity and extension and $T_+$ has quantifier elimination, then $\ind^+$ satisfies extension.
\end{enumerate}
\smallskip
In addition, if $T_0\subseteq T_+$, then {\normalfont (2), (3), and (7)} also hold for the corresponding properties stated over models.
\end{theorem}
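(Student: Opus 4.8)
The plan is to unfold the definition $A\ind^+_C B\iff\acl_+(AC)\ind^0_{\acl_+(C)}\acl_+(BC)$ in each case and reduce the desired property of $\ind^+$ to the corresponding property of $\ind^0$, using repeatedly that $\acl_+$ is monotone and finitary, that $\acl_+$ of $BC$ together with $C$ is just $\acl_+(BC)$, and the description $\acl_+(X)=\acl_0(\langle X\rangle_\L)\cap\UU_+$ of Lemma~\ref{acl} (together with its consequences $\acl_0(\acl_+(X))=\acl_0(\langle X\rangle_\L)$ and the fact that each $\acl_+$-closed set is automatically relatively $\acl_0$-closed over its $\acl_+$-closed subsets). \emph{Invariance} of $\ind^+$ is clear, and \emph{normality} is immediate since $A\ind^+_C BC$ is literally the same assertion as $A\ind^+_C B$. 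For \emph{monotonicity} and \emph{symmetry} of $\ind^+$ one unfolds and applies the same property of $\ind^0$, using $\acl_+(BC)\subseteq\acl_+(BDC)$ in the first case. For \emph{finite character}, finitariness of $\acl_+$ gives $\acl_+(AC)=\bigcup\{\acl_+(A_0C):A_0\subseteq A\text{ finite}\}$, so any finite tuple of $\acl_+(AC)$ already lies in some $\acl_+(A_0C)$, and finite character of $\ind^0$ does the rest. \emph{Transitivity} of $\ind^+$ for $C\subseteq B\subseteq D$ follows from transitivity and monotonicity of $\ind^0$ after noting $\acl_+(BC)=\acl_+(B)$, $\acl_+(DB)=\acl_+(D)=\acl_+(DC)$, and $\acl_+(AC)\subseteq\acl_+(AB)$.

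For \emph{local character} (4), by finite character of $\ind^+$ (just proved, since (4) assumes finite character of $\ind^0$) we may take $A$ to be a finite tuple, and as $C\subseteq B$ forces $\acl_+(BC)=\acl_+(B)$ we must produce a small $C\subseteq B$ with $\acl_+(AC)\ind^0_{\acl_+(C)}\acl_+(B)$. Build an increasing chain $(C_i)$ of small subsets of $B$: given $C_i$, apply local character of $\ind^0$ to the small set $\acl_+(AC_i)$ to obtain a small $D_i\subseteq\acl_+(B)$ with $\acl_+(AC_i)\ind^0_{D_i}\acl_+(B)$, and let $C_{i+1}$ consist of $C_i$ together with finitely many elements of $B$ witnessing $D_i\subseteq\acl_+(C_{i+1})$. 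Taking $C=\bigcup_iC_i$ (small, once the ambient cardinal $\kappa(A)$ is chosen closed enough that the sizes stay controlled), finite character of $\ind^0$ reduces $\acl_+(AC)\ind^0_{\acl_+(C)}\acl_+(B)$ to finite tuples $\bar e\subseteq\acl_+(AC_i)$, each of which is handled by $\acl_+(AC_i)\ind^0_{D_i}\acl_+(B)$ after enlarging $D_i$ to $\acl_+(C)$ via base monotonicity of $\ind^0$. The only delicate point is the cardinal bookkeeping.

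\emph{Base monotonicity} (5) is the step I expect to be the main obstacle. The geometric heart is this: from the hypothesis $A\ind^+_C BD$ one gets $\acl_+(AC)\ind^0_{\acl_+(C)}\acl_+(CD)$ by monotonicity of $\ind^0$, and then Definition~\ref{like}(ii), applied with the structure in its statement taken to be $\langle\acl_+(AC),\acl_+(CD)\rangle_{\L_0}$, shows that this $\L_0$-generated structure is already $\L$-closed, whence $\acl_+(ACD)=\acl_0\!\big(\acl_+(AC)\cup\acl_+(CD)\big)\cap\UU_+$. Separately, the six axioms on $\ind^0$ listed in (5) are precisely those under which $\ind^0$ is invariant under algebraic closure, i.e.\ $X\ind^0_ZY\iff\acl_0(XZ)\ind^0_{\acl_0(Z)}\acl_0(YZ)$ (cf.\ Adler~\cite{Ad09}). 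Feeding the displayed identity into this equivalence, together with base monotonicity and normality of $\ind^0$, converts $A\ind^+_C BD$ into $A\ind^+_{CD}B$. The delicate bookkeeping is the comparison of $\acl_0(\langle ACD\rangle_\L)$ with $\acl_0(\acl_+(AC)\cup\acl_+(CD))$, and it is exactly this comparison that forces the extra axioms and the appeal to derivation-likeness.

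Finally, \emph{full existence} (6) and \emph{extension} (7) follow the template of the proofs of Lemmas~\ref{diagram} and~\ref{acl}. Setting $F=\acl_+(C)$, one uses full existence (resp.\ extension) of $\ind^0$ inside $\UU_0$ to produce an $\L_0$-copy of a small $\L$-elementary substructure of $\UU_+$ containing $A\cup F$ (resp.\ designed to agree with $A$ over $\acl_+(BC)$), independent over $F$ from a small $\L$-elementary substructure $K\preceq_\L\UU_+$ carrying the relevant right-hand data; one transports the $\L$-structure along an $\L_0$-automorphism of $\UU_0$ fixing the base pointwise, checks the hypotheses of Definition~\ref{like} (using that $\acl_+$-closed sets are relatively $\acl_0$-closed) to amalgamate this copy with $K$ into a model of $T$, extends to a model $N\models T_+$, and re-embeds $N$ into $\UU_+$ over $K$ by model-completeness of $T_+$; the image of $A$ is the desired $A'$, and monotonicity (and, for (6), symmetry) of $\ind^0$ cut the resulting configuration down to the required one between $\acl_+$-closures. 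For (7), quantifier elimination of $T_+$ — which we are assuming throughout — is what makes a partial $\L_0$-isomorphism between $\acl_+$-closures automatically $\L$-elementary, guaranteeing $A'\equiv_{BC}A$, and this replaces the use of symmetry. For the over-models clause: when the base is a model $M\models T_+$ we have $\acl_+(M)=M$, and the hypothesis $T_0\subseteq T_+$ forces $M\models T_0$ (indeed $M\preceq_{\L_0}\UU_0$), so the over-models hypotheses on $\ind^0$ apply to the instances of $\ind^0$ that arise in the arguments for (2), (3), and (7); without $T_0\subseteq T_+$ the base $M$ need not even be an $\L_0$-structure of $T_0$ and those hypotheses give nothing.
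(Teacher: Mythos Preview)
Your proposal is correct and follows essentially the same approach as the paper: each item is handled by unfolding the definition of $\ind^+$ and reducing to the corresponding property of $\ind^0$, with (5)--(7) using the derivation-like axiom and the amalgamate-extend-re-embed template exactly as you describe; for (4) the paper simply cites \cite{BloMar19}, whose proof is the chain construction you sketch. One small imprecision: your remark that quantifier elimination for $T_+$ is ``assumed throughout'' is not quite right---the blanket reduction after Corollary~\ref{corollary-qe-transfers} only yields QE for $T_+$ under full existence of $\ind^0$, which is why (7) lists QE for $T_+$ as a separate hypothesis.
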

\begin{proof}
\emph{Invariance}. Suppose $A \ind^+_C B$ and $\tp_+(ABC) = \tp_+(A'B'C')$. Then 
$$\tp_+(\acl_+(ABC)) = \tp_+(\acl_+(A'B'C')),$$
and similar arguments to the proofs above (using quantifier elimination for $T_0$) show that 
$$\tp_0(\acl_+(ABC)) = \tp_0(\acl_+(A'B'C')).$$ 
Invariance for $\ind^0$ then means that $A' \ind^+_{C'} B'$.

\emph{Normality} is by definition -- it does not require normality of $\ind^0$.

\emph{Monotonicity}. Suppose $A \ind^+_C B$ and $D \subseteq B$. Then $\acl_+(AC) \ind^0_{\acl_+(C)} \acl_+(BC)$. Also $\acl_+(DC) \subseteq \acl_+(BC)$, so by monotonicity for $\ind^0$, we have that 
$$\acl_+(AC) \ind^0_{\acl_+(C)} \acl_+(DC);$$
that is, $A \ind^+_C D$.

\emph{Transitivity} follows from transitivity and monotonicity of $\ind^0$.

\emph{Symmetry} follows from symmetry of $\ind^0$.

\emph{Finite character} follows from finite character of $\ind^0$ and the fact that $\acl_+$ is finitary.

\emph{Local character}. Precisely the same proof as in Theorem~2.1 of \cite{BloMar19} applies here.

\emph{Base monotonicity}. Suppose $A \ind^+_C B$ and $C \subseteq D \subseteq B$. We may also assume that $A \supseteq C$ by normality. Then $\acl_+(A) \ind^0_{\acl_+(C)} \acl_+(B)$. By monotonicity, we have $\acl_+(A) \ind^0_{\acl_+(C)} \acl_+(D)$. Since $T$ is derivation-like, $$\acl_0( \acl_+(A) \acl_+(D)) \cap \UU_+\leq_{\L}\UU_+.$$ 
So $\langle A D \rangle_\L \subseteq \acl_0( \acl_+(A) \acl_+(D)) \cap \UU_+$, and so by Lemma~\ref{acl} we have
$$\acl_+(AD) = \acl_0(\langle AD \rangle_\L) \cap \UU_+ \subseteq \acl_0(\acl_+(A) \acl_+(D)) \cap \UU_+.$$ 
By base monotonicity and normality for $\ind^0$, we get 
$$\acl_+(A) \acl_+(D) \ind^0_{\acl_+(D)} \acl_+(B).$$ 
By full existence, we get 
$$\acl_0(\acl_+(A) \acl_+(D)) \ind^0_{\acl_+(A) \acl_+(D)} \acl_+(B),$$ and by symmetry, transitivity, and monotonicity, $\acl_+(AD) \ind^0_{\acl_+(D)} \acl_+(B)$. That is, $A \ind^+_{D} B$.

For \emph{full existence}, suppose $a, A, B$ are given inside $\UU_+$. We need to find $a' \in \UU_+$ such that $\tp_+(a'/A) = \tp_+(a/A)$ and $a' \ind^+_A B$. Let $K$ be some small $\L$-elementary substructure of $\UU_+$ containing $a, A, B$. Write $C = \acl_+(A)$. Use full existence for $\ind^0$ to find $L \leq_{\L_0} \UU_0$ with $L \ind^0_{C} K$ and $\tp_0(L/C) = \tp_0(K/C)$. Let $\sigma \in \Aut(\UU_0/C)$ be the $\L_0$-automorphism taking $K$ to $L$. This automorphism then induces an $\L$-structure on $L$. Since $T$ is derivation-like, there is some $M \models T$ such that $K,L \leq_{\L} M \leq_{\L_0} \UU_0$. Since $T_+$ is the model companion of $T$, there is some $N \models T_+$ extending $M$. Let $\phi \colon N \to \UU_+$ be the $\L$-elementary embedding of $N$ inside $\UU_+$ that fixes $K$. Then
\[
\tp_+^{\UU_+}(a/C) = \tp_+^K(a/C) = \tp_+^L(\sigma(a)/C) = \tp_+^N(\sigma(a)/C) = \tp_+^{\UU_+}(\phi \sigma (a) /C).
\]

We also have the following chain of equalities of quantifier-free $\L_0$-types:
\[
\qftp_0^{\UU_0}(L/K) = \qftp_0^M(L/K) = \qftp_0^N(L/K) = \qftp_0^{\UU_+}(\phi(L)/K) = \qftp_0^{\UU_0}(\phi(L)/K).
\]



As $T_0$ has quantifier elimination, this yields $\tp_0(L/K)=\tp_0(\phi(L)/K)$. Invariance then gives $\phi(L) \ind^0_C K$, and monotonicity gives $\acl_+(C, \phi \sigma(a)) \ind^0_C \acl_+(AB)$; that is, $\phi \sigma(a) \ind^+_A B$.

\emph{Extension.} Suppose $A \ind^+_C B$ and $D \supseteq B$ is given. We need to find $A' \equiv_{BC} A$ with $A' \ind^+_C D$. As usual we may assume $C \subseteq A, B$ and that these parameter sets are $\acl_+$-closed. Let $K$ be a small $\L$-elementary substructure of $\UU_+$ containing all of these sets. Use extension for $\ind^0$ to find $A' \ind^0_{C} K$ with $\tp_0(A'/ B) = \tp_0(A/ B)$. This $\L_0$-isomorphism induces an $\L$-isomorphic structure on $A'$. By the derivation-like axiom, there is some $M \models T$ such that $M \leq_{\L_0} \UU_0$ with $A', K \leq_\L M$. Since $T_+$ is the model companion of $T$, extend $M$ to some $N \models T_+$, and let $\phi \colon N \to \UU_+$ be an $\L$-elementary embedding of $N$ in $\UU_+$ which fixes $K$. Then
\[
\qftp_+^{\UU_+}(A/B) = \qftp_+^K(A/B) = \qftp_+^M(A'/B) = \qftp_+^N(A'/B) = \qftp_+^{\UU_+}(\phi (A') /B).
\]
By quantifier elimination for $T_+$, $\tp_+(A/B) = \tp_+(\phi(A') / B)$.

As usual,
\[
\qftp_0^{\UU_0}(A'/K) = \qftp_0^{M}(A'/K) = \qftp_0^{N}(A'/K) = \qftp_0^{\UU_+}(\phi(A')/K) = \qftp_0^{\UU_0}(\phi(A')/K),
\]
and by quantifier elimination for $T_0$ and invariance and monotonicity for $\ind^0$, we get $\phi(A') \ind^0_C D$.

\medskip

For the final clause of the statement, note that the arguments provided in (2), (3), and (7) hold when working over models since $T_0 \subseteq T_+$. As an example, we will give the proof that if $\ind^0$ satisfies monotonicity over models, then $\ind^+$ satisfies monotonicity over models.

\emph{Monotonicity over models.} Suppose $A \ind^+_C B$ and $D \subseteq B$ where $C \models T_+$. Then $\acl_+(AC) \ind^0_{C} \acl_+(BC)$ -- since $C$ is $\acl_+$-closed -- and $\acl_+(DC) \subseteq \acl_+(BC)$. Now since $T_0 \subseteq T_+$, $C$ is also a model of $T_0$, and so by monotonicity over models for $\ind^0$, we have that 
$$\acl_+(AC) \ind^0_{C} \acl_+(DC);$$
that is, $A \ind^+_C D$.
\end{proof}

Using the above theorem, we observe that rosiness transfers.

\begin{corollary}\label{rosy-transfers}
Assume both $T_0$ and $T_+$ admit elimination of imaginaries. If $T_0$ is rosy, then so is $T_+$.
\end{corollary}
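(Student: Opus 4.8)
The plan is to apply Theorem~\ref{neostability}(iv): since $T_+$ eliminates imaginaries, it suffices to exhibit a strict independence relation on $\UU_+$, and the natural candidate is $\ind^+$. Strictness means $\ind^+$ must be an independence relation in the sense of Definition~\ref{indprop} (satisfying the eight properties (1)--(8)) and in addition must satisfy anti-reflexivity. Because $T_0$ is rosy with strict independence relation $\ind^0$, we know $\ind^0$ itself satisfies (1)--(8) together with anti-reflexivity; I would then invoke Theorem~\ref{thm-ind-properties-transfer} to push each of these properties across to $\ind^+$.

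Concretely, I would go through the eight defining properties one at a time. Invariance and normality come for free from Theorem~\ref{thm-ind-properties-transfer}(1). Monotonicity, symmetry, and finite character transfer by part (2). For base monotonicity I would use part (5), noting that its hypotheses (normality, monotonicity, base monotonicity, transitivity, symmetry, full existence for $\ind^0$) all hold because $\ind^0$ is an independence relation. Transitivity transfers by part (3) using monotonicity of $\ind^0$. Full existence transfers by part (6). Local character transfers by part (4), whose hypotheses (base monotonicity, finite character, local character of $\ind^0$) are again all available. This accounts for properties (1)--(8), so $\ind^+$ is an independence relation.

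It then remains to check anti-reflexivity of $\ind^+$: if $a \ind^+_C a$ then $a \in \acl_+(C)$. Unwinding the definition, $a \ind^+_C a$ means $\acl_+(aC) \ind^0_{\acl_+(C)} \acl_+(aC)$; in particular, writing $a' = \acl_+(aC)$ as a (small) set, monotonicity of $\ind^0$ gives $a' \ind^0_{\acl_+(C)} a'$ with $a'$ on both sides, so by anti-reflexivity of $\ind^0$ every element of $\acl_+(aC)$ lies in $\acl_0(\acl_+(C))$. But $\acl_+(C) = \acl_0(\langle C \rangle_\L) \cap \UU_+$ is already $\acl_0$-closed as a subset of $\UU_+$ by Lemma~\ref{acl}, so $\acl_0(\acl_+(C)) \cap \UU_+ = \acl_+(C)$; since $a \in \acl_+(aC) \subseteq \UU_+$, we conclude $a \in \acl_+(C)$, as required. (Here one uses that $\ind^0$ satisfies monotonicity and anti-reflexivity, both available by strictness, and the hypothesis of Lemma~\ref{acl}, namely that $\ind^0$ satisfies monotonicity, symmetry, full existence, and anti-reflexivity.) Having verified that $\ind^+$ is a strict independence relation, Theorem~\ref{neostability}(iv) applies and $T_+$ is rosy.

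I do not anticipate a serious obstacle: the work is essentially bookkeeping, checking that the hypotheses needed in each clause of Theorem~\ref{thm-ind-properties-transfer} are subsumed by "$\ind^0$ is a strict independence relation." The one point requiring a small independent argument rather than a direct citation is anti-reflexivity of $\ind^+$, which Theorem~\ref{thm-ind-properties-transfer} does not list; but as sketched above this follows quickly from Lemma~\ref{acl} and anti-reflexivity plus monotonicity of $\ind^0$. The elimination of imaginaries hypothesis on $T_0$ is needed only to know that "rosy" is equivalent to "admits a strict independence relation" via Theorem~\ref{neostability}(iv) — that is where we read off that $\ind^0$ is strict — and the same hypothesis on $T_+$ is needed to run that equivalence in the reverse direction for $T_+$.
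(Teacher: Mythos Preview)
Your proposal is correct and follows essentially the same route as the paper: reduce to Theorem~\ref{neostability}(iv), use Theorem~\ref{thm-ind-properties-transfer} to verify $\ind^+$ is an independence relation, and check anti-reflexivity directly via Lemma~\ref{acl}. One small point: to pass from $\acl_+(aC)\ind^0_{\acl_+(C)}\acl_+(aC)$ to $b\ind^0_{\acl_+(C)} b$ for each $b\in\acl_+(aC)$ you need monotonicity on \emph{both} sides, i.e.\ symmetry together with monotonicity of $\ind^0$ (the paper makes this explicit); your parenthetical only names monotonicity, but symmetry is of course available since $\ind^0$ is strict.
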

\begin{proof}
    By Theorem~\ref{neostability}(iv), it suffices to show that $T_+$ admits an strict independence relation. Taking $\ind^0$ to be any strict independence relation (which exists by rosiness of $T_0$), Theorem~\ref{thm-ind-properties-transfer} yields that $\ind^+$ is an independence relation; thus, it suffices to show that $\ind^+$ satisfies anti-reflexivity. Suppose $a\ind^+_C a$. Then, by symmetry and monotonicity of $\ind^0$, we get $a\ind^0_{\acl_+(C)} a$; and so, by anti-reflexity of $\ind^0$, we obtain 
    $$a\in \acl_0(\acl_+(C))\cap \UU_+=\acl_+(C),$$
    where the last equality uses Lemma~\ref{acl}.
\end{proof}

We now address the transfer of the independence theorem.

\begin{theorem}\label{thm-ind-thm-transfers}
Let $M \models T_+$ and suppose $T_0\subseteq T$. Assume the following: $T_0$ has, in addition to $\ind^0$, an independence relation $\ind^1$ such that $\ind^0_M \implies \ind^1_M$; $T$ is derivation-like with respect to $(T_0, \ind^1)$; and $\ind^0_M$ satisfies monotonicity, symmetry, and extension. 
If $\ind^0$ satisfies the independence theorem over $M$, then so does $\ind^+$.
\end{theorem}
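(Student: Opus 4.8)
The plan is to run the amalgamation argument behind the proofs of full existence and extension in Theorem~\ref{thm-ind-properties-transfer}, but with the independence theorem for $\ind^0$ over $M$ supplying the $\L_0$-amalgam and the derivation-like axiom relative to $\ind^1$ performing the final amalgamation of $\L$-structures. Since $T_0\subseteq T$ we have $\UU_+\preceq_{\L_0}\UU_0$, and after Morleyising $\L_0$ (Remark~\ref{Morleyisation}) we may assume $T_0$ has quantifier elimination and that $\acl_{\L_0}$-closures are rigid over the parameter data fixed below; all structures that appear will live inside $\UU_0$.

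First I would reduce and build the $\L_0$-amalgam. Given $A_1\ind^+_M A_2$, $a_1\ind^+_M A_1$, $a_2\ind^+_M A_2$ and $a_1\equiv_M a_2$, use normality and monotonicity of $\ind^+$ to replace each $A_i$ by $B_i:=\acl_+(MA_i)$ and each $a_i$ by $d_i:=\acl_+(Ma_i)$; then $B_i,d_i\supseteq M$ are $\acl_+$-closed, $B_1\ind^0_M B_2$, $d_i\ind^0_M B_i$, and $d_1\equiv_M d_2$ via an $\L$-isomorphism $\tau\colon d_1\to d_2$ over $M$, which is also $\L_0$-elementary. It then suffices to produce $d\subseteq\UU_+$ realising $\tp_+(d_1/MB_1)$ and, through $\tau$, $\tp_+(d_2/MB_2)$, with $d\ind^+_M B_1B_2$. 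Apply the independence theorem for $\ind^0$ over $M$ to $(B_1,B_2,d_1,d_2)$ and then extension for $\ind^0_M$ to get $d^*\subseteq\UU_0$ with $\tp_0(d^*/MB_1)=\tp_0(d_1/MB_1)$, $\tp_0(d^*/MB_2)=\tp_0(d_2/MB_2)$ (via $\tau$), and $d^*\ind^0_M K$ for a small $\L$-elementary substructure $K\preceq_\L\UU_+$ containing $B_1\cup B_2$. Picking $\sigma_i\in\Aut_{\L_0}(\UU_0/MB_i)$ with $\sigma_i(d_i)=d^*$, transport along $\sigma_i$ the $\L$-structure that $\UU_+$ induces on a small $\L$-substructure containing $MB_id_i$; the images $P_i$ are models of $T^\forall$ whose $\L_0$-reducts are substructures of $\UU_0$, each containing $B_i$ and $d^*$.

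Next I would glue and amalgamate. The crucial point is that $P_1$ and $P_2$ induce the \emph{same} $\L$-structure on $Q:=\acl_{\L_0}(Md^*)\cap P_1=\acl_{\L_0}(Md^*)\cap P_2$: by the uniqueness clause of Definition~\ref{like} in the form of Remark~\ref{aclA-and-useforCCM}(1) — applicable because $T$ is derivation-like relative to $\ind^1$ and $\ind^1$ has full existence — each transported $\L$-structure on $\acl_{\L_0}(Md^*)$ is forced to be the unique $\L$-expansion of the ambient $\L_0$-structure modelling $T^\forall$ and extending $\langle Md^*\rangle_\L$; rigidity of $\acl_{\L_0}(Md^*)$ over $Md^*$ (from the Morleyisation) together with $d_1\equiv^\L_M d_2$ forces the two transports to coincide. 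So $P_1$ and $P_2$ amalgamate as $\L$-structures over the common $\L$-substructure $Q$, and $\acl_{\L_0}(Q)\cap P_i=Q$ by construction. For the independence, from $B_1\ind^0_M B_2$, $d^*\ind^0_M B_1B_2$ and $d^*\ind^0_M K$ the hypothesis $\ind^0_M\implies\ind^1_M$ gives the same independences for $\ind^1$, and the standard calculus for the full independence relation $\ind^1$ over $M$ (base monotonicity, symmetry, transitivity, normality) yields $P_1\ind^1_Q P_2$ as well as the independence of $\langle MB_1B_2d^*\rangle_\L$ from $K$ over $\acl_{\L_0}(MB_1B_2)$ — this is where it matters that the independence over the non-model bases is produced via $\ind^1$, since $\ind^0$ is only controlled over $M$ here. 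Hence, applying the derivation-like axiom relative to $\ind^1$ twice (first merging $P_1$ and $P_2$ over $Q$, then merging the result with $K$), we obtain $M'\models T$ with $M'\leq_{\L_0}\UU_0$ containing $P_1,P_2$ and $K$ as $\L$-substructures; extend $M'$ to $N\models T_+$ and let $\phi\colon N\to\UU_+$ be an $\L$-elementary embedding fixing $K$. Set $d:=\phi(d^*)$. As in the proofs of full existence and extension, the chains of quantifier-free type equalities along $\UU_+,K,M',N,\UU_+$ together with quantifier elimination for $T_+$ and $T_0$ give $\tp_+(d/MB_1)=\tp_+(d_1/MB_1)$, $\tp_+(d/MB_2)=\tp_+(d_2/MB_2)$ (via $\tau$), and $\phi(d^*)\ind^0_M K$; invariance, monotonicity and Lemma~\ref{acl} upgrade the last to $d\ind^+_M B_1B_2$, finishing the proof.

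The step I expect to be the main obstacle is the gluing: ensuring that the two $\L$-structures transported onto the amalgam $d^*$ from the $B_1$-side and from the $B_2$-side agree on their overlap, so that there is a genuine common $\L$-substructure on which to invoke part (ii) of Definition~\ref{like}. This is exactly what the uniqueness half of the derivation-like axiom is designed to supply, and passing to the Morleyisation of $\L_0$ is what makes $\acl_{\L_0}(Md^*)$ rigid enough to force the two transports to coincide; without this they could a priori differ by an $\L_0$-automorphism of $\acl_{\L_0}(Md^*)$ fixing $Md^*$, and the argument would break down. A secondary point is bookkeeping: the independences needed for the derivation-like amalgamations are over bases that are not models, so they must be obtained by first passing through $\ind^1_M$ and only then applying the independence calculus, which is legitimate precisely because $\ind^1$, unlike $\ind^0$ here, is a full independence relation.
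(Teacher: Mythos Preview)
Your overall strategy coincides with the paper's: obtain the $\L_0$-amalgam $d^*$ via the independence theorem for $\ind^0$ over $M$, transport $\L$-structures to build two $T^\forall$-models over $d^*$, apply the derivation-like axiom (relative to $\ind^1$) twice to amalgamate with the ambient model $K$, and embed back. Two points, however, need correction.

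\textbf{The gluing justification is wrong, though the conclusion is right.} Morleyisation does \emph{not} make $\acl_{\L_0}(Md^*)$ rigid over $Md^*$; it only gives quantifier elimination, and $\acl_0$ can still strictly contain $\dcl_0$. The actual reason the two transported $\L$-structures agree on $d^*$ is simpler and has nothing to do with rigidity: the independence theorem outputs a single tuple $d^*$ simultaneously realising $\tp_0(d_1/B_1)$ and $\tp_0(d_2/B_2)$, so if you enumerate $d_2$ via $\tau$ then $\sigma_1|_{d_1}$ and $(\sigma_2\circ\tau)|_{d_1}$ are literally the same bijection onto $d^*$; since $\tau$ is an $\L$-isomorphism, the pushforward $\L$-structures coincide. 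The paper records this simply by observing that the $\L$-structure on $a$ inherited from either $N_i$ is well-defined because $a_1\equiv_M^+ a_2$.

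\textbf{The $\ind^1$-independences are the real work, and you have not done it.} You assert that ``standard calculus'' yields $P_1\ind^1_{d^*}P_2$ and the independence needed for the second amalgamation, but as written this is a gap: your $P_i$ is the $\sigma_i$-image of an unspecified ``small $\L$-substructure'', and nothing forces two such images to be $\ind^1$-independent over $d^*$. The paper deals with this head-on in its Claim~2: it transports the \emph{entire} model $N$ to copies $N_1',N_2'$ and then uses full existence for $\ind^1$ to replace them by $N_i\equiv^0_{A_ia}N_i'$ with $N_1\ind^1_{A_1a}N$ and $N_2\ind^1_{A_2a}NN_1$, after which a careful chain of base-monotonicity/transitivity manipulations produces both $N_1\ind^1_a N_2$ and $N\ind^1_{A_1A_2}N_1N_2$. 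Your route can be salvaged if you take $P_i\subseteq\acl_0(B_id^*)$ specifically (e.g.\ $P_i=\sigma_i(\acl_+(B_id_i))$): then the needed independences do follow from $B_1\ind^1_M B_2$ and $d^*\ind^1_M K$ by the same kind of calculus the paper uses for base monotonicity in Theorem~\ref{thm-ind-properties-transfer}(5), but this is several steps of work, not a one-liner, and it depends essentially on that specific choice of $P_i$. Either way, this is where the argument lives, not in the gluing step you flagged as the main obstacle.
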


\begin{proof}
Let $M \models T_+$, $A_1 \ind^+_M A_2$, $a_1 \ind^+_{M} A_1$, $a_2 \ind^+_{M} A_2$, and $\tp_+(a_1/M) = \tp_+(a_2/M)$. We will show that there is $a \ind^+_{M} A_1 A_2$ realising $\tp_+(a_1/MA_1) \cup \tp_+(a_2/MA_2)$. Let $N \models T_+$ be some $\L$-elementary substructure of $\UU_+$ containing all of the above subsets. 

Note that, by Theorem~\ref{thm-ind-properties-transfer}, $\ind^+_M$ satisfies symmetry and extension (the fact that $T$ is derivation-like with respect to the independence relation $\ind^1$ yields that $T_+$ has quantifier elimination by Corollary~\ref{corollary-qe-transfers}). Thus, we may assume that $A_1$, $A_2$, $a_1$, and $a_2$ are all $\acl_+$-closed and contain $M$ (note that $T_0\subseteq T$ implies that they are also $\acl_0$-closed).

\medskip

\textbf{Claim 1.} There is some $a \in \UU_0$ with $a \ind^0_{M} N$ and $a \models \tp_0(a_1/A_1) \cup \tp_0(a_2/A_2)$.\\
\noindent \textit{Proof of claim.} Note first that by definition of $\ind^+$, we have the following facts: $A_1 \ind^0_M A_2$, $a_1 \ind^0_{M} A_1$, $a_2 \ind^0_{M} A_2$, and $\tp_0(a_1/M) = \tp_0(a_2/M)$. By the independence theorem for $\ind^0$, there is $a \in \UU_0$ with $a \ind^0_{M} A_1 A_2$ and $a \models \tp_0(a_1/A_1) \cup \tp_0(a_2/A_2)$. Now by extension for $\ind^0_M$, we may assume that $a \ind^0_{M} N$.

\medskip

\textbf{Claim 2.} Inside $\UU_0$, there are $\L_0$-isomorphic copies of $N$, $N_1$ and $N_2$, both containing $a$, with $N_1 \ind^1_{a} N_2$ and $N \ind^1_{A_1 A_2} N_1 N_2$.\\
\noindent \textit{Proof of claim.} Note first that, by assumption and the fact that $a \ind^0_M N$, we have that $a \ind^1_M N$. Now for $i=1,2$, let $N_i'$ be the copy of $N$ coming from the $\L_0$-automorphism $A_i a_i \mapsto A_i a$. By full existence for $\ind^1$, let $N_i \equiv_{A_i a}^0 N_i'$ with $N_1 \ind^1_{A_1 a} N$ and $N_2 \ind^1_{A_2 a} NN_1$. Then $N \ind^1_{A_1} N_1$ and $N \ind^1_{A_2} N_2$ by transitivity. From $a \ind^1_{M} N$ we get $a \ind^1_{A_1} A_2$, and so $A_1 a \ind^1_{A_1} A_2$. Along with $A_1 \ind^1_{M} A_2$, transitivity gives $A_1 a \ind^1_{M} A_2$, so that $A_1 a \ind^1_{a} A_2$ by base monotonicity. This implies $A_1 \ind^1_{a} A_2$ and $N_1 \ind^1_{a} A_2$. This last part implies $N_1 \ind^1_{a} A_2 a$ and along with $N_2 \ind^1_{A_2 a} N N_1$ implies $N_1 \ind^1_{a} N_2$. Also, $N \ind^1_{A_1 A_2} N_1$ by base monotonicity since $A_1 A_2 \subseteq N$. From $N N_1 \ind^1_{A_2 a} N_2$, we get $N \ind^1_{A_2 a N_1} N_2$, and hence $N \ind^1_{A_2 N_1} N_2$ since $a \in N_1$. Combining this with $N \ind^1_{A_1 A_2} N_1$ gives $N \ind^1_{A_1 A_2} N_1 N_2$.

\medskip

\textbf{Claim 3.} There is some model of $T$ which is an $\L$-extension of $N$, $N_1$, and $N_2$.\\
\noindent \textit{Proof of claim.} Define $\L$-structures on $N_1$ and $N_2$ such that $(N_i, A_i, a)$ is $\L$-isomorphic to $(N, A_i, a_i)$. 
So $N_i \models T_+$ for $i=1,2$. Note that since $a_i$ is an $\L$-substructure of $N$, $a$ is also an $\L$-substructure of $N_i$. Now $N_1 \ind^1_{a} N_2$, and $a$ is relatively $\acl_0$-closed in $N_1$ and $N_2$; since $T$ is derivation-like with respect to $(T_0,\ind^1)$, there is some $P \models T$ such that $P \leq_{\L_0} \UU_0$ and $N_1, N_2 \leq_\L P$. 
Since $T_0\subseteq T$ and using part (ii) of the definition of derivation-like, we have $\acl_0(N_1N_2)$ is an $\L$-substructure of $P$. By the uniqueness clause of part (ii) of derivation-like and the fact that $A_1 \ind^1_{M} A_2$, we have that $\acl_0(A_1 A_2)$ is equipped with an $\L$-structure that makes it simultaneously an $\L$-substructure of $N$ and an $\L$-substructure of $\acl_0(N_1N_2)$. Now $N \ind^1_{A_1 A_2} N_1 N_2$, and so $N \ind^1_{\acl_0(A_1A_2)} \acl_0(N_1N_2)$ by invariance, base monotonicity, monotonicity, transitivity, and full existence. Now by part (i) of the derivation-like axiom we may find some $S \models T$ with $S \leq_{\L_0} \UU_0$ and $N, \acl_0(N_1N_2) \leq_\L S$. So $S$ is an $\L$-extension of $N$, $N_1$, and $N_2$, as desired.

\medskip

Now $S$ extends to some $S' \models T_+$. Let $j \colon S' \to \UU_+$ be an $\L$-elementary embedding of $S'$ in $\UU_+$ that fixes $N$. Then
\[
\tp_+^{\UU_+}(a_1/A_1) = \tp_+^N(a_1/A_1) = \tp_+^{N_1}(a/A_1) = \tp_+^{S'}(a/A_1) = \tp_+^{\UU_+}(j(a)/A_1)
\]
and similarly we have $j(a) \equiv_{A_2}^+ a_2$. 

As $T_0$ has quantifier elimination, we have $\tp_0(a/N)=\tp_0(j(a)/N)$. Now, by construction of $a$, we had $a \ind^0_{M} N$, and by monotonicity and invariance, we get $j(a) \ind^0_{M} \acl(A_1 A_2)$, and so $j(a) \ind^+_{M} A_1 A_2$.
\end{proof}

The following addresses transfer of stationarity.

\begin{theorem}\label{thm-stationarity-transfers}
Let $M \models T_+$ with $\dcl_0(M)\models T_0$. Suppose $\ind^0$ satisfies base monotonicity, extension, and full existence. If $\ind^0$ satisfies stationarity over $\dcl_0(M)$, then $\ind^+$ satisfies stationarity over $M$.
\end{theorem}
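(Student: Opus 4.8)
The plan is to reduce the statement to $\ind^0$-stationarity over the model $\dcl_0(M)$ (a model of $T_0$ by hypothesis), using the uniqueness clause of Definition~\ref{like} to transport the conclusion obtained at the $\L_0$-level back up to $\UU_+$. After replacing $A$ by $\acl_+(A)\supseteq M$, write $M':=\dcl_0(M)$; since $\UU_+\preceq_{\L_0}\UU_0$ one has $\acl_0(X)\cap\UU_+=X$ for every $\acl_+$-closed $X\subseteq\UU_+$, so in particular $M'\cap\UU_+=M$. Fix $\sigma\in\Aut(\UU_+/M)$ with $\sigma(a)=b$; being $\L_0$-elementary it extends to $\hat\sigma\in\Aut(\UU_0/M)$, which fixes $M'$ pointwise. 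Enumerate $\hat a:=\acl_+(aM)$ starting with $a$, and let $\hat b:=\sigma(\hat a)=\acl_+(bM)$ carry the transported enumeration, so that $\hat a\equiv^0_{M'}\hat b$. The device used repeatedly below is the elementary observation that every automorphism in $\Aut(\UU_0/A)$ fixes $M'$ pointwise (because $M\subseteq A$), so a type-equality over $M'$ witnessed by an $A$-automorphism is automatically one over $A$.

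\emph{Downstairs.} I would first show $\hat a\equiv^0_A\hat b$. From $\hat a\ind^0_M A$, extension followed by base monotonicity produces $\hat a'\equiv^0_A\hat a$ with $\hat a'\ind^0_{M'}AM'$, and likewise $\hat b'\equiv^0_A\hat b$ with $\hat b'\ind^0_{M'}AM'$; the witnessing $A$-automorphisms also give $\hat a'\equiv^0_{M'}\hat a$ and $\hat b'\equiv^0_{M'}\hat b$, hence $\hat a'\equiv^0_{M'}\hat b'$. Stationarity of $\ind^0$ over $M'$ then yields $\hat a'\equiv^0_{AM'}\hat b'$, so $\hat a'\equiv^0_A\hat b'$, and composing, $\hat a\equiv^0_A\hat b$. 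Choosing $\mu\in\Aut(\UU_0/A)$ that realises this equality of enumerated tuples, the restrictions of $\mu$ and of $\sigma$ to $\hat a$ are the \emph{same} bijection $f\colon\hat a\to\hat b$, which is therefore simultaneously an $\L$-isomorphism fixing $M$ (from $\sigma$) and an $\L_0$-isomorphism fixing $A$ (from $\mu$).

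\emph{Upstairs.} Set $D_a:=\langle\hat a,A\rangle_\L$ and $D_b:=\langle\hat b,A\rangle_\L$, $\L$-substructures of $\UU_+$. Base monotonicity upgrades $\hat a\ind^0_M A$ to $\hat a\ind^0_{\hat a\cap A}A$, and $\hat a\cap A$ is relatively $\acl_0$-closed in each of $\hat a$ and $A$ (again by $\acl_0(X)\cap\UU_+=X$); so the triple $(\hat a,A,\hat a\cap A)$ satisfies the hypotheses of Definition~\ref{like}, and clause~(ii) applied with $M=\UU_+$ gives that $D_a=\langle\hat a,A\rangle_{\L_0}$ and that its $\L$-structure is the \emph{only} $\L$-structure that expands its $\L_0$-structure, models $T^\forall$, and extends the $\L$-structures of $\hat a$ and $A$ — and symmetrically for $D_b$, using $\hat b\ind^0_M A$. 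Since $\mu$ fixes $A$ and maps $\hat a$ onto $\hat b$, $\bar f:=\mu|_{D_a}$ is an $\L_0$-isomorphism $D_a\to D_b$ extending $f$ and $\operatorname{id}_A$. Transporting the $\L$-structure of $D_a$ along $\bar f$ produces an $\L$-structure on $D_b$ that expands its $\L_0$-structure, models $T^\forall$, extends the $\L$-structure of $A$ (as $\bar f$ fixes $A$), and — this is where it matters that $f$ is an $\L$-isomorphism — extends the $\L$-structure of $\hat b$; by the uniqueness just quoted it must coincide with the $\L$-structure of $D_b$, so $\bar f\colon D_a\to D_b$ is an $\L$-isomorphism over $A$ with $\bar f(a)=b$. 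As $T_+$ has quantifier elimination and $D_a,D_b\leq_\L\UU_+$, $\bar f$ is a partial $\L$-elementary map of $\UU_+$, whence $a\equiv^+_A b$.

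The downstairs computation is routine; the only delicate point there is to keep the enumerations of $\hat a$ and $\hat b$ aligned so that $\sigma$ and $\mu$ genuinely induce one and the same bijection $f$ (this is why one enumerates $\hat a$ beginning with $a$ and transports the enumeration by $\sigma$), since the upstairs argument hinges on the single map $f$ carrying \emph{both} structures. The substantive step — and the place where derivation-likeness really enters, and where I expect the main difficulty to sit if the hypotheses were relaxed — is the passage upstairs: the uniqueness clause of Definition~\ref{like} is exactly what promotes the $\L_0$-isomorphism $\bar f$, obtained downstairs from $\ind^0$-stationarity over $\dcl_0(M)$, to an $\L$-isomorphism (and the relative $\acl_0$-closedness required to invoke Definition~\ref{like} is what forces working with the $\acl_+$-closures $\hat a,\hat b$ rather than with $a,b$ directly).
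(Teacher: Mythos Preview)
Your proof is correct and follows essentially the same strategy as the paper's: reduce to $\ind^0$-stationarity over $\dcl_0(M)$ via extension and base monotonicity, then use the uniqueness clause of derivation-like to promote the resulting $\L_0$-isomorphism between $\langle \hat a,A\rangle_{\L_0}$ and $\langle \hat b,A\rangle_{\L_0}$ to an $\L$-isomorphism, and conclude by quantifier elimination for $T_+$. Your explicit bookkeeping with the aligned enumerations (via $\sigma$ and $\mu$) makes transparent exactly why the transported $\L$-structure agrees on $\hat b$, a point the paper leaves implicit.

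Two small remarks. First, your justification ``since $\UU_+\preceq_{\L_0}\UU_0$'' is not available in general (e.g.\ $T_0=\ACF_p$, $T_+=\SCF_{p,e}$); the conclusion $\acl_0(X)\cap\UU_+=X$ for $\acl_+$-closed $X$ nonetheless holds, because quantifier elimination for $T_0$ lets any witness of $\L_0$-algebraicity be taken quantifier-free, and a quantifier-free $\L_0$-formula with finitely many solutions in $\UU_0$ has at most as many in $\UU_+\leq_{\L_0}\UU_0$, so the element lands in $\acl_+(X)=X$. Second, the detour through the base $\hat a\cap A$ is unnecessary: $M$ itself is a common $\L$-substructure of $\hat a$ and $A$, relatively $\acl_0$-closed in both (by the preceding remark), and already satisfies $\hat a\ind^0_M A$, so the paper simply invokes derivation-like over $M$. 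Conversely, your replacing $A$ by $\acl_+(A)$ rather than by a model of $T_+$ is cleaner than the paper's route, which appeals to extension for $\ind^+$ (via Theorem~\ref{thm-ind-properties-transfer}(7)) and thereby tacitly uses monotonicity of $\ind^0$, not among the stated hypotheses.
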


\begin{proof}
Note that, by full existence and Corollary~\ref{corollary-qe-transfers}, $T_+$ has quantifier elimination.
Now suppose $M \subset N\subset \UU_+$, $a, b \in \UU_+$ with $\tp_+(a/M) = \tp_+(b/M)$, $a \ind^+_M N$, and $b \ind^+_M N$. Since $\ind^+$ satisfies extension (by Theorem~\ref{thm-ind-properties-transfer}(7)), we may assume that $N$ is a model of $T_+$. Let $K_a = \acl_+(Ma)$ and $K_b = \acl_+(Mb)$. By definition of $\ind^+$, we have that $K_a \ind^0_M N$ and $K_b \ind^0_M N$. By extension for $\ind^0$, the same independence holds after replacing $N$ for some $N_0$ containing $N\cup \dcl_0(M)$. Hence, by base monotonicity, $K_a \ind^0_{\dcl_0(M)} N_0$ and $K_b \ind^0_{\dcl_0(M)} N_0$. Note that $\tp_0(K_a / \dcl_0(M)) = \tp_0(K_b / \dcl_0(M))$. Then by stationarity for $\ind^0$, 
\[
\tp_0(K_a / N) = \tp_0(K_b / N).
\]
This implies that there is an $\L_0$-isomorphism $\langle K_a N \rangle_{\L_0} \to \langle K_b N \rangle_{\L_0}$ taking $a \mapsto b$ and fixing $N$.

Note that $M$ is an $\L$-substructure of $K_a$, $K_b$, and $N$. Furthermore, by Remark~\ref{aclA-and-useforCCM}(1) and full existence, $\acl_0(M)\cap \UU_+=\acl_+(M)=M$ (as $M\models T_+$).


By the derivation-like axiom, $\langle K_a N \rangle_{\L_0}$ and $\langle K_b N \rangle_{\L_0}$ are $\L$-substructures of $\UU_+$. By its uniqueness clause, this $\L_0$-isomorphism must be an $\L$-isomorphism. So $\qftp_+(a/N) = \qftp_+(b/N)$. By quantifier elimination for $T_+$, we have $\tp_+(a/N) = \tp_+(b/N)$.
\end{proof}

\begin{corollary}\label{neostability-transfers}
Suppose $\ind^0$ is nonforking independence.
\begin{enumerate}
    \item Assume that $\dcl_0(M)\models T_0$ whenever $M\models T_+$. If $T_0$ is stable, then $T_+$ is stable and $\ind^+$ is nonforking independence.
    \item Assume $T_0\subseteq T$. If $T_0$ is simple, then $T_+$ is simple and $\ind^+$ is nonforking independence.
\end{enumerate}
\end{corollary}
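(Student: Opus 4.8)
The plan is to read off the conclusions from the Kim--Pillay-style characterisations in Theorem~\ref{neostability}, applied to the induced relation $\ind^+$, after using the transfer theorems of this section to move the needed properties from $\ind^0$ to $\ind^+$. The first step, common to both parts, is to record the hypotheses available on $\ind^0$: since $\ind^0$ is nonforking independence in the stable (resp.\ simple) theory $T_0$, it is an independence relation in the sense of Definition~\ref{indprop}, it satisfies extension, and it satisfies stationarity over models (resp.\ the independence theorem over models). In particular $\ind^0$ has full existence, so $T_+$ has quantifier elimination (via Corollary~\ref{corollary-qe-transfers}), and Theorem~\ref{thm-ind-properties-transfer} applies to give that $\ind^+$ is an independence relation satisfying (1)--(8) and extension.

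For part (1), I would then verify the hypotheses of Theorem~\ref{thm-stationarity-transfers}: $\ind^0$ satisfies base monotonicity, extension, and full existence; and for each $M \models T_+$ the assumption $\dcl_0(M) \models T_0$ together with stationarity of $\ind^0$ over models of $T_0$ yields stationarity of $\ind^0$ over $\dcl_0(M)$. Theorem~\ref{thm-stationarity-transfers} then gives that $\ind^+$ satisfies stationarity over $M$, for every $M \models T_+$; that is, $\ind^+$ satisfies stationarity over models. Since $\ind^+$ is an independence relation, Theorem~\ref{neostability}(i) yields that $T_+$ is stable and that $\ind^+$ coincides with nonforking independence.

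For part (2), I would apply Theorem~\ref{thm-ind-thm-transfers} with $\ind^1 := \ind^0$: trivially $\ind^0_M \implies \ind^1_M$; $T$ is derivation-like with respect to $(T_0,\ind^1)$ by Assumption~\ref{theassumption}(i); $\ind^0_M$ satisfies monotonicity, symmetry, and extension because $T_0$ is simple; and $T_0 \subseteq T$ by the hypothesis of this part. Since $\ind^0$ satisfies the independence theorem over models, Theorem~\ref{thm-ind-thm-transfers} gives the same for $\ind^+$, so by Theorem~\ref{neostability}(ii), $T_+$ is simple and $\ind^+$ is nonforking independence.

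The substantive work has already been carried out in Theorems~\ref{thm-ind-properties-transfer},~\ref{thm-ind-thm-transfers}, and~\ref{thm-stationarity-transfers}, so no genuine obstacle remains; the only point requiring care is bookkeeping of hypotheses -- in particular, recognising that ``stationarity over models of $T_0$'' suffices to feed Theorem~\ref{thm-stationarity-transfers} precisely because the extra assumption of part (1) guarantees $\dcl_0(M)$ is a model of $T_0$, and that the assumption $T_0 \subseteq T$ in part (2) is exactly what lets one take $\ind^1 = \ind^0$ and invoke the independence-theorem transfer without introducing an auxiliary relation.
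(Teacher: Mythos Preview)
Your proposal is correct and follows exactly the same approach as the paper: the paper's proof simply cites Theorems~\ref{thm-ind-properties-transfer} and~\ref{thm-stationarity-transfers} for part~(1), and Theorems~\ref{thm-ind-properties-transfer} and~\ref{thm-ind-thm-transfers} with $\ind^1=\ind^0$ for part~(2). Your write-up just spells out the hypothesis-checking that the paper leaves implicit.
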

\begin{proof}
    (1) follows from Theorems~\ref{thm-ind-properties-transfer} and \ref{thm-stationarity-transfers}; while (2) follows from Theorems~\ref{thm-ind-properties-transfer} and \ref{thm-ind-thm-transfers} (note that in the latter we take $\ind^1=\ind^0$).
\end{proof}

\begin{example}
In Corollary~\ref{neostability-transfers} we required that $\dcl_0(M) \models T_0$ for every $M \models T_+$. If $T_0 \subseteq T_+$, then this holds automatically since $T_0$ is model complete. Otherwise, Example~\ref{example-T0-in-T+} provides a case where this requirement holds. Suppose $T_0 = \ACF_p$ and $T = \SCF_{p,e}$. Then for any set $A$, $\dcl_0(A)$ coincides with the perfect closure of the field generated by $A$. So, if $M \models \SCF_{p,e}$, then $\dcl_0(M) \models \ACF_p$.
\end{example}

We now aim to prove a similar result on the transfer of NSOP$_1$. We will need to restrict to the case of fields to apply Theorem~\ref{thm-ind-thm-transfers} with a particular choice of $\ind^1$.

Assume $T_0$ is an $\L_0$-theory of fields, we say that a relation $\ind^1$ on $T_0$ \emph{implies $\L_0$-compositums} if for all $K, L\leq_{\L_0}\UU_0$ satisfying $K\ind_E^1 L$, for some $E=\acl_0(E)\cap K=\acl_0(E)\cap L$, the compositum $K\cdot L$ is an $\L_0$-substructure of $\UU_0$. Following \cite{JunKo2010}, we say that $T_0$ is \emph{very $\L_0$-slim} if for every $F\leq_{\L_0}\UU_0$ we have that 
$$\acl_0(F)=F^{\alg}\cap \UU_0.$$

Define the relation $\ind^1$ on small subsets of $\UU_0$ by
\begin{equation}\label{defalgdis1}
A\ind^1_C B \quad \iff \quad \langle A\, C\rangle_{\L_0}\ind^{\alg}_{\langle C\rangle_{\L_0}} \langle B\, C\rangle_{\L_0}
\end{equation}
where $\ind^{\alg}$ denotes algebraic independence in fields.


\begin{fact}\label{fact-l0-compositumns}
Assume $\ind^1$ implies $\L_0$-compositums. The relation $\ind^1$ as defined above is an independence relation if and only if $T_0$ is very $\L_0$-slim.
\end{fact}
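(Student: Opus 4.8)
The plan is to go through the eight axioms of Definition~\ref{indprop} for $\ind^1$ one at a time, pinpointing where very $\L_0$-slimness is needed; throughout I would use the standard fact that algebraic independence $\ind^{\alg}$ satisfies all eight axioms on $\UU_0$ viewed merely as a field, and that $F^{\alg}\cap\UU_0\subseteq\acl_0(F)$ always, since the field operations are $\L_0$-definable. For the implication ``$\ind^1$ an independence relation $\Rightarrow$ $T_0$ very $\L_0$-slim'' I would argue by contraposition, and in fact only full existence for $\ind^1$ is used: if $T_0$ is not very $\L_0$-slim, pick $F\leq_{\L_0}\UU_0$ and $a\in\acl_0(F)$ transcendental over $F$, so $\tp_0(a/F)$ has finitely many realisations, say $n$ of them. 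Using full existence for $\ind^1$ repeatedly, build $(a_i)_{i<\omega}$ with $a_0=a$, each $a_{i+1}\equiv_F a$, and $a_{i+1}\ind^1_F\{a_j:j\le i\}$; unwinding the definition of $\ind^1$ (with $\langle F\rangle_{\L_0}=F$) and using monotonicity of $\ind^{\alg}$, each $a_{i+1}$ is algebraically independent over $F$ from $\{a_j:j\le i\}$. Since every $a_i$ has the same $\L_0$-type over $F$ as $a$, each $a_i$ is transcendental over $F$, and a transcendence-degree induction shows $\{a_i:i<\omega\}$ is algebraically independent over $F$, hence has pairwise distinct entries, contradicting that $\tp_0(a/F)$ has only $n$ realisations.

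For the converse, assume $T_0$ is very $\L_0$-slim. Invariance, normality (which is immediate from the shape of the definition and does not even use normality of $\ind^{\alg}$), monotonicity, symmetry, transitivity and finite character transfer directly from the corresponding properties of $\ind^{\alg}$ together with the monotone, finitary behaviour of $X\mapsto\langle X\rangle_{\L_0}$; none of these uses slimness or the $\L_0$-compositum hypothesis. For local character, given $A$ I would break the circularity between the base and $\langle AC\rangle_{\L_0}$ by iterating local character of $\ind^{\alg}$: construct increasing small $C_n\subseteq B$ and small subfields $F_n\subseteq\langle B\rangle_{\L_0}$ with $\langle AC_n\rangle_{\L_0}\ind^{\alg}_{F_{n+1}}\langle B\rangle_{\L_0}$ and $F_{n+1}\subseteq\langle C_{n+1}\rangle_{\L_0}$ (each generator of $F_{n+1}$ being an $\L_0$-term over finitely many elements of $B$), so that $C:=\bigcup_n C_n$ witnesses $A\ind^1_C B$ via finite character and base monotonicity of $\ind^{\alg}$. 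The $\L_0$-compositum hypothesis enters only for base monotonicity of $\ind^1$: from $A\ind^1_C BD$ one extracts $\langle AC\rangle_{\L_0}\ind^{\alg}_{\langle C\rangle_{\L_0}}\langle CD\rangle_{\L_0}$, and very slimness identifies $\acl_0$ with relative field-algebraic closure, allowing a passage to $\acl_0$-closures that gives $\acl_0(\langle AC\rangle_{\L_0})\ind^1_{\acl_0(\langle C\rangle_{\L_0})}\acl_0(\langle CD\rangle_{\L_0})$ over the $\acl_0$-closed base $\acl_0(\langle C\rangle_{\L_0})$; the $\L_0$-compositum hypothesis then forces $\acl_0(\langle AC\rangle_{\L_0})\cdot\acl_0(\langle CD\rangle_{\L_0})$ to be an $\L_0$-substructure, hence to contain $\langle ACD\rangle_{\L_0}$, and combining this with the base-change of the original hypothesis over $\langle CD\rangle_{\L_0}$ yields $A\ind^1_{CD}B$.

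The crux is \emph{full existence} for $\ind^1$, which is essentially equivalent to very $\L_0$-slimness. Given $A,B,C$, fix a small $M_1\models T_0$ with $AC\subseteq M_1\preceq\UU_0$ and a small $M_0\models T_0$ with $BC\subseteq M_0\preceq\UU_0$, and build by recursion along an enumeration of $M_1$ starting with $\langle C\rangle_{\L_0}$ (mapped identically) an $\L_0$-embedding $f\colon M_1\to\UU_0$ fixing $C$ pointwise with $f[M_1]\ind^{\alg}_{\langle C\rangle_{\L_0}}M_0$; then $A':=f(A)$ does the job, since model-completeness of $T_0$ makes $f$ $\L_0$-elementary and $\langle A'C\rangle_{\L_0}\subseteq f[M_1]$, so $A'\equiv_C A$ and $A'\ind^1_C B$ by monotonicity. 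At each stage the element to be mapped is either algebraic over the current image set, in which case any realisation of its $\L_0$-type over that set is automatically $\ind^{\alg}$-independent from $M_0$ over the base, or transcendental over it, in which case one needs a realisation of its $\L_0$-type transcendental over the union of $M_0$ with the current image set. This is exactly where very slimness is essential: transcendence over a small set $E'$ puts the element outside $\acl_0(E')=(\langle E'\rangle_{\L_0})^{\alg}\cap\UU_0$, so its $\L_0$-type over $E'$ is non-algebraic and therefore has realisations outside the (small) relative field-algebraic closure of any prescribed small parameter set, giving the required transcendental realisation; left-transitivity of $\ind^{\alg}$ then propagates the independence through the recursion. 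I expect this step, together with its tight linkage to very $\L_0$-slimness from the first direction, to be the main obstacle, with the reconciliation of $\langle\cdot\rangle_{\L_0}$-generated substructures and field composita in base monotonicity a close second.
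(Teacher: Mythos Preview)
Your proposal is correct and is precisely the adaptation of Theorem~2.1 of \cite{JunKo2010} that the paper invokes as its proof (with details deferred to \cite{Moha24}): verify the eight axioms of Definition~\ref{indprop} directly, using very $\L_0$-slimness exactly for full existence (and, together with the $\L_0$-compositum hypothesis, for base monotonicity), and derive non-slimness from a failure of full existence via an infinite algebraically independent sequence of realisations of an algebraic $\L_0$-type. There is nothing to add.
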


The proof is an adaptation of Theorem 2.1 of \cite{JunKo2010}. Some details are provided in Lemma 4.4.7 of the second author's thesis \cite{Moha24}.

\begin{corollary}\label{corollay-nsop-transfers}
Assume that $T_0$ is very $\L_0$-slim, that $\ind^1$ implies $\L_0$-compositums, that $T$ is derivation-like with respect to $(T_0, \ind^1)$, and that $T_0\subseteq T$. If $T_0$ is NSOP$_1$ and $\ind^0$ is Kim-independence, then $T_+$ is NSOP$_1$ and $\ind^+$ is Kim-independence.
\end{corollary}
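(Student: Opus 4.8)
The plan is to verify the hypotheses of the NSOP$_1$ Kim--Pillay-type criterion, Theorem~\ref{neostability}(iii), for the relation $\ind^+$, using the machinery already assembled. First I would note that since $T_0$ is very $\L_0$-slim and $\ind^1$ implies $\L_0$-compositums, Fact~\ref{fact-l0-compositumns} tells us that $\ind^1$ (as defined by \eqref{defalgdis1}) is an independence relation on $T_0$; in particular it satisfies all of (1)--(8). Since $\ind^1$ is built from algebraic independence $\ind^{\alg}$ in fields via $\L_0$-generated structures, it satisfies $\ind^0_M \implies \ind^1_M$ for every $M \models T_+$: this is because $\ind^0$ is Kim-independence in the NSOP$_1$ theory $T_0$, and Kim-independence over models implies algebraic (hence $\ind^1$-) independence (the base case being that Kim-forking extends forking extends algebraic dependence). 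I would record this implication explicitly, as it is exactly the hypothesis ``$\ind^0_M \implies \ind^1_M$'' of Theorem~\ref{thm-ind-thm-transfers}.

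Next, I would assemble the ingredients of Theorem~\ref{neostability}(iii) for $\ind^+$. Invariance and normality come from Theorem~\ref{thm-ind-properties-transfer}(1). Since $\ind^0$ is Kim-independence in an NSOP$_1$ theory, it satisfies over models: monotonicity, symmetry, finite character, existence, extension, and the independence theorem (by Theorem~\ref{neostability}(iii) applied to $T_0$ itself), as well as transitivity over models and chain local character. By Theorem~\ref{thm-ind-properties-transfer}(2),(3),(7) together with its final clause (which applies since $T_0 \subseteq T \subseteq T_+$), $\ind^+$ inherits monotonicity, transitivity, finite character, and extension over models; symmetry over models likewise follows from (2). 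Existence over models for $\ind^+$ is immediate from its definition together with existence for $\ind^0$. For chain local character of $\ind^+$, I would adapt the local-character argument referenced in Theorem~\ref{thm-ind-properties-transfer}(4) (the proof in \cite{BloMar19}), feeding in that $\acl_+$ is finitary and Lemma~\ref{acl} to reduce to chain local character of $\ind^0$; this is the routine part. Finally, the independence theorem over models for $\ind^+$ is provided by Theorem~\ref{thm-ind-thm-transfers}: we apply it with this $\ind^1$, having checked $\ind^0_M \implies \ind^1_M$, that $T$ is derivation-like with respect to $(T_0,\ind^1)$ (an explicit hypothesis of the corollary), that $T_0 \subseteq T$, and that $\ind^0_M$ satisfies monotonicity, symmetry, and extension (all true as $\ind^0$ is Kim-independence over models). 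Theorem~\ref{neostability}(iii) then yields that $T_+$ is NSOP$_1$ and that $\ind^+$ coincides with Kim-independence over models.

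I expect the main subtlety to be the verification that $\ind^0_M \implies \ind^1_M$, i.e. that Kim-independence over a model $M \models T_0$ implies $\L_0$-compositum-style algebraic independence. One must be careful that $M$ here is a model of $T_+$, viewed as an $\L_0$-substructure of $\UU_0$, rather than a model of $T_0$; but since $T_0 \subseteq T \subseteq T_+$, such an $M$ is genuinely a model of $T_0$, so Kim-independence of $T_0$ over $M$ makes sense, and the implication ``Kim-independence $\Rightarrow$ nonforking $\Rightarrow$ $\ind^1$'' (the last step using that $\ind^1$ is weaker than forking-independence because $\ind^1$ is a genuine independence relation whose associated forking is dominated by algebraic dependence, or alternatively directly because nonforking extension implies $\acl_0$-disjointness over a model) is standard. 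A second point requiring care is confirming that all the ``over models'' transfers of Theorem~\ref{thm-ind-properties-transfer} really are available here: this hinges on $T_0 \subseteq T_+$, which holds by hypothesis, and on $T_+$ having quantifier elimination, which follows from Corollary~\ref{corollary-qe-transfers} since $T$ is derivation-like with respect to $(T_0,\ind^1)$ and $\ind^1$ satisfies full existence. Once these two points are nailed down, the rest is bookkeeping against Theorem~\ref{neostability}(iii).
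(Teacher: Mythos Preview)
Your overall architecture matches the paper's proof: verify the conditions of Theorem~\ref{neostability}(iii) for $\ind^+$ by invoking Theorem~\ref{thm-ind-properties-transfer} (in its over-models form, using $T_0\subseteq T_+$) together with Theorem~\ref{thm-ind-thm-transfers}, after checking that $\ind^1$ is a genuine independence relation via Fact~\ref{fact-l0-compositumns} and that $T_+$ has quantifier elimination via Corollary~\ref{corollary-qe-transfers}. That part is fine.

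There is, however, a genuine error in your justification of $\ind^0_M \implies \ind^1_M$. You claim the chain ``Kim-independence $\Rightarrow$ nonforking $\Rightarrow$ $\ind^1$'', but the first arrow points the wrong way. Over a model, a Morley sequence in a global invariant type is in particular indiscernible, so Kim-dividing implies dividing; hence Kim-forking implies forking, and taking complements gives nonforking $\Rightarrow$ Kim-independence, \emph{not} the reverse. In a strictly NSOP$_1$ (non-simple) theory there are Kim-independent configurations that fork, so you cannot route through nonforking to reach algebraic independence. Your parenthetical ``Kim-forking extends forking extends algebraic dependence'' encodes the same reversed implication.

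The desired implication ``Kim-independence over a model $\Rightarrow$ algebraic independence'' is nevertheless true for fields, but it requires a direct argument rather than a factoring through nonforking. The paper simply cites Proposition~3.9.26 of Ramsey's thesis \cite{Ram18}, which establishes exactly that two subfields Kim-independent over a submodel are algebraically independent. You should invoke that (or reprove it: show that an algebraic formula isolating an element of $\acl_0(Mb)\setminus\acl_0(M)$ Kim-divides over $M$, using that a Morley sequence of $b$ over $M$ produces infinitely many pairwise distinct finite solution sets). With this fix in place, the rest of your outline goes through and coincides with the paper's proof.
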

\begin{proof}
By Proposition~3.9.26 of \cite{Ram18}, if two subfields are Kim-independent over a submodel, then they are algebraically independent. So the condition $\ind^0_M \implies \ind^1_M$ holds, and $\ind^+$ satisfies the independence theorem over models by Theorem~\ref{thm-ind-thm-transfers} (noting that $\ind^1$ is an independence relation by Fact~\ref{fact-l0-compositumns}). Existence over models and chain local character each transfer from $\ind^0$ to $\ind^+$ since every model of $T_+$ is also a model of $T_0$. The remaining conditions of Theorem~\ref{neostability}(iii) hold by Theorem~\ref{thm-ind-properties-transfer} (note that (7) of that theorem, the transfer of existence, does apply as $T_+$ has quantifier elimination; this is by Corollary~\ref{corollary-qe-transfers} and the fact that $\ind^1$ satisfies full existence).
\end{proof}

\bigskip

\section{Examples}\label{examples}

In this section we observe that there are plenty of examples of theories that are derivation-like, and hence to which the results of the previous section apply (when the model companion exists).

\medskip

\subsection{Separably closed fields and Hasse-Schmidt fields}

Fix a prime $p>0$ and $e$ a finite non-negative integer. Let $\L_0$ be the language of fields, $T_0=\ACF_p$ and $\ind^0$ forking independence (which coincides with algebraic disjointness $\ind^{\alg}$). Let $\L_{b,\lambda}$ be the language of fields expanded by constants $b=(b_1,\dots,b_e)$ and unary function symbols $(\lambda_i)_{i\in p^e}$. Let $T_{b,\lambda}$ be the theory of fields of characteristic $p$ together with sentences specifying that $b$ is a $p$-basis and that the $\lambda_i$'s are interpreted as the $\lambda$-functions with respect to $b$ (in some fixed order of the $p$-monomials).

\begin{lemma}\label{oplikeSCF}
The theory $T_{b,\lambda}$ is derivation-like with respect to $(\ACF_p,\ind^{\alg})$.
\end{lemma}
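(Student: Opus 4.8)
The plan is to verify the two conditions of Definition~\ref{like} directly for $T_{b,\lambda}$ over $(\ACF_p, \ind^{\alg})$. So let $A, B, C \models T_{b,\lambda}^\forall$ with $C$ a common $\L_{b,\lambda}$-substructure of $A$ and $B$, with $A, B \leq_{\L_0} \UU_0$ (so $A$ and $B$ are subfields of a large algebraically closed field of characteristic $p$), with $\acl_0(C) \cap A = \acl_0(C) \cap B = C$ (i.e.\ $C$ is relatively algebraically closed in both $A$ and $B$, since $\acl_0 = \alg$ in $\ACF_p$), and with $A$ algebraically disjoint from $B$ over $C$. The key algebraic input is the standard fact that a substructure of a model of $T_{b,\lambda}$ is exactly a field of characteristic $p$ in which $b$ is a $p$-basis and the $\lambda_i$ compute $p$-coordinates with respect to the monomials in $b$; equivalently, $A$ is closed under the $\lambda$-functions and $A^p[b] = A$ (every element of $A$ is a $\mathbb{F}_p$-linear, in fact $A^p$-linear, combination of the $p^e$ monomials in $b$). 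For (i), I would take $M$ to be any model of $\ACF_p$ containing $\langle A, B\rangle_{\L_0} = A \cdot B$ sitting inside $\UU_0$ — but one must equip $M$ with an $\L_{b,\lambda}$-structure making it a model of $T_{b,\lambda}$ and extending those of $A$ and $B$; since $b$ remains a $p$-basis of any algebraically closed field and the $\lambda$-functions are then uniquely determined, this produces $M \models T_{b,\lambda}$ with $A, B \leq_{\L_{b,\lambda}} M$, and the only thing to check is that the $\lambda$-functions of $M$ restrict correctly on $A$ and on $B$, which holds because $p$-coordinates are computed the same way in any overfield sharing the $p$-basis.

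For (ii), the content is the uniqueness-and-existence of a compatible $\L_{b,\lambda}$-structure on any intermediate field $D$ with $A \cdot B \leq_{\L_0} D \leq_{\L_0} \acl_0(A,B) \cap M = (A \cdot B)^{\alg} \cap M$. Uniqueness is immediate: in any field of characteristic $p$ with $p$-basis $b$, the $\lambda$-functions are forced, so at most one $\L_{b,\lambda}$-expansion of $D$'s field structure can be a model of $T_{b,\lambda}^\forall$; and if $D \leq_{\L_{b,\lambda}} M$ holds for one such expansion, it is that one. So the real work is showing $D \leq_{\L_{b,\lambda}} M$ — equivalently that $D$ is closed under the $\lambda$-functions of $M$, i.e.\ that the $p$-coordinates (with respect to $b$) of an element of $D$ again lie in $D$. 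This is where algebraic disjointness $A \ind^{\alg}_C B$ and the hypothesis $e < \infty$ enter: one uses that $A \cdot B$ is already closed under $\lambda$-functions (being generated as a ring by $A$, $B$, which individually are), that $b$ stays a $p$-basis up in $M$, and then a separability argument: an element of $(A\cdot B)^{\alg}$ that is $p$-dependent on $b$ over $A \cdot B$ would force a failure of $b$ being a $p$-basis, or of the separable-algebraic closure being closed under $\lambda$'s. The cleanest route is to recall that for a field $F$ with finite $p$-basis $b$, the $\lambda$-functions extend uniquely to the separable-algebraic closure $F^{\text{sep}}$ (this is the positive-characteristic analogue of derivations extending uniquely to separable extensions — and indeed it is the structural fact the whole paper is abstracting), so that $D^{\text{sep}} \cap M$, a fortiori $D$, inherits a canonical $\L_{b,\lambda}$-structure; and since $D \subseteq (A \cdot B)^{\alg}$ while $A\cdot B$ has $b$ as a $p$-basis, $D$ is separably algebraic over $A \cdot B$ (purely inseparable extensions of a field with a $p$-basis strictly enlarge the $p$-basis data, so cannot occur inside $M$ which shares $b$), so $D$ lies in $(A\cdot B)^{\text{sep}}$ and is thus closed under the uniquely-extended $\lambda$-functions.

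Assembling: (i) gives the existence of $M$; for (ii), fix such an $M$ and an intermediate $D$, argue $D / A\cdot B$ is separably algebraic using that $b$ is a $p$-basis of both $A \cdot B$ and $M$, invoke unique extension of the $\lambda$-structure to separable-algebraic extensions to get $D \leq_{\L_{b,\lambda}} M$, and note this $\L_{b,\lambda}$-structure is the unique one on $D$ expanding its field structure to a model of $T_{b,\lambda}^\forall$ and extending $A$ and $B$ (again by forcedness of $\lambda$). I expect the main obstacle to be the careful handling of the $p$-basis condition under the intermediate extension $D$ — specifically, cleanly justifying that $D$ is separably algebraic over $A \cdot B$ (equivalently, that no purely inseparable element can sneak in) and that the $\lambda$-functions of $M$ genuinely restrict to $\lambda$-functions on $D$ with respect to the same $b$. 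The algebraic-disjointness hypothesis is used precisely to know that $A \cdot B$ behaves well (e.g.\ that $b$ remains $p$-independent in it and that it is closed under the $\lambda$'s coming from $A$ and $B$ simultaneously); verifying these compatibility statements is routine field theory with $\lambda$-functions but needs to be spelled out, and is the heart of the lemma.
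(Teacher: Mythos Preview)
Your plan for condition (ii) is sound and matches the paper's: once $b$ is a $p$-basis of $A\cdot B$, any intermediate $D$ with $A\cdot B \leq D \leq (A\cdot B)^{\alg}\cap M$ is separably algebraic over $A\cdot B$, and $p$-bases persist under separable algebraic extensions, so $D$ is closed under the $\lambda$-functions and the expansion is unique. That is exactly the paper's one-line justification of (ii).

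The genuine error is in your choice of $M$ for condition (i). You propose to take $M$ to be an algebraically closed field containing $A\cdot B$ and then claim that ``$b$ remains a $p$-basis of any algebraically closed field.'' This is false: an algebraically closed field of characteristic $p$ is perfect, so $[M:M^p]=1$ and its only $p$-basis is the empty tuple. For $e\geq 1$ no model of $\ACF_p$ can be a model of $T_{b,\lambda}$, and your subsequent claim in (ii) that ``$b$ is a $p$-basis of both $A\cdot B$ and $M$'' then fails as well. The fix is to take $M = A\cdot B$ itself (or any separably closed extension of degree of imperfection $e$), but for this you must first show that $b$ is a $p$-basis of $A\cdot B$. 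That is where the hypotheses do their real work: since $C\leq_{\L_{b,\lambda}}A$, the extension $A/C$ is separable; together with $C = C^{\alg}\cap A$ this makes $A/C$ regular, and regularity plus $A\ind^{\alg}_C B$ upgrades algebraic disjointness to linear disjointness over $C$. Linear disjointness is what guarantees that $b$ remains $p$-independent in $A\cdot B$, so that $A\cdot B\models T_{b,\lambda}$ and serves as the required $M$. Your proposal gestures at this (``algebraic disjointness is used precisely to know that $A\cdot B$ behaves well'') but does not identify the regularity-to-linear-disjointness step, which is the actual content of (i).
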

\begin{proof}
With $\UU_0$ a monster model of $\ACF_p$, let $A,B,C\models T_{b,\lambda}^{\forall}$ be as in the definition of derivation-like. Since $C\leq_{\L_{b,\lambda}}A$, we have that $A/C$ is a separable field extension. This, together with the fact that $C=C^{\alg}\cap A$, implies that the field extension $A/C$ is regular (i.e., separable and relatively algebraically closed). This, together with $A\ind_C^{\alg} B$, implies that $A$ and $B$ are linearly disjoint over $C$. Linear disjointness implies that $b$ is $p$-independent in the compositum $A\cdot B$ (and hence a $p$-basis). If follows that $A\cdot B\models T_{b,\lambda}$ and $A,B\leq_{\L_{b,\lambda}}A\cdot B\leq_{\L_0} \UU_0$. This shows condition (i) of derivation-like. Condition (ii) follows from the fact that $p$-bases are preserved when passing to separably algebraic extensions.
\end{proof}

The model companion of $T_{b,\lambda}$ is $\SCF_{p,e}$. Note that in this case for any $M\models \SCF_{p,e}$ we have that $\dcl^{\ACF_p}(M)\models \ACF_p$ (since perfect closures of separably closed fields remain separably closed). Thus, our Corollary~\ref{neostability-transfers}(1) applies and recovers the well known fact that $\SCF_{p,e}$ is stable and (in the language $\L_{b,\lambda}$) forking independence coincides with algebraic disjointness.

\medskip

In a similar fashion we can also recover the context of iterative Hasse-Schmidt derivations from \cite{Zieg03}. Let $\L_{\partial}$ be the language of fields expanded by unary function symbols
$$((\partial_{1,j})_{j=1}^{\infty},\dots,(\partial_{e,j})_{j=1}^\infty).$$
Let $T_{\partial}$ be the theory of fields of characteristic $p$ expanded by sentences specifying that $(\partial_{i,j})_{j=1}^\infty$ is an iterative Hasse-Schmidt derivation and that, for different $i$, they pairwise commute.

\begin{lemma}
The theory $T_{\partial}$ is derivation-like with respect to $(\ACF_p,\ind^{\alg})$.
\end{lemma}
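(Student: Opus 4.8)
The plan is to mirror the argument of Lemma~\ref{oplikeSCF} as closely as possible, replacing the combinatorics of $p$-bases by the combinatorics of Hasse--Schmidt derivations. Let $\UU_0$ be a monster model of $\ACF_p$ and take $A,B,C\models T_\partial^\forall$ as in Definition~\ref{like}, so that $A,B\leq_{\L_0}\UU_0$, $C^{\alg}\cap A = C^{\alg}\cap B = C$, and $A\ind^{\alg}_C B$. As in Lemma~\ref{oplikeSCF}, since $C\leq_{\L_\partial} A$ the field extension $A/C$ is separable, and together with $C = C^{\alg}\cap A$ this makes $A/C$ regular; the same holds for $B/C$. Regularity of $A/C$ together with $A\ind^{\alg}_C B$ yields that $A$ and $B$ are linearly disjoint over $C$, so that the compositum $A\cdot B$ (formed inside $\UU_0$) is naturally a quotient of $A\otimes_C B$.

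The key algebraic input is that a (commuting family of iterative) Hasse--Schmidt derivation(s) extends uniquely along any base change and, in particular, to any field generated by two linearly disjoint HS-subfields. Concretely: given the HS-derivations on $A$ and on $B$ agreeing on $C$, there is a unique family of commuting iterative HS-derivations on $A\otimes_C B$ restricting to those on the two factors, and this pushes down to the localization $A\cdot B$. This is the analogue of the tensor-product property recalled in the introduction for ordinary derivations; for iterative HS-derivations it is the content (or a routine consequence) of the relevant lemmas in \cite{Zieg03}. Hence $A\cdot B\models T_\partial^\forall$, and in fact $A\cdot B\models T_\partial$ — the defining axioms of $T_\partial$ (being an iterative HS-derivation, and pairwise commutation for distinct $i$) are universal, so they pass from the statement of uniqueness to $A\cdot B$ itself. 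Since $A\cdot B\leq_{\L_0}\UU_0$ and $A,B\leq_{\L_\partial}A\cdot B$, and since $T_\partial^\forall$ has (or we may take, via its model companion, as in \S\ref{examples}) a model companion whose models extend $A\cdot B$, condition (i) of derivation-like is verified — more precisely, one first embeds $A\cdot B$ into a model $M\models T_\partial$ with $M\leq_{\L_0}\UU_0$.

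For condition (ii), let $D$ be any $\L_0$-structure with $\langle A,B\rangle_{\L_0}\leq_{\L_0} D\leq_{\L_0}\acl_0(A,B)\cap M$. Since $\acl_0$ in $\ACF_p$ is field-theoretic algebraic closure, $D$ is a field lying between the compositum $A\cdot B$ and its algebraic closure, hence $D/(A\cdot B)$ is an algebraic extension; writing it as a union of finitely generated subextensions and noting that the HS-derivations on a field extend \emph{uniquely} to any separably algebraic extension and (since everything happens inside $M\leq_{\L_0}\UU_0\models\ACF_p$, so $D$ is a subfield of a perfect field where each $\partial_{i,j}$ is already defined) compatibly to the purely inseparable part as well, we get that $D$ carries a unique family of commuting iterative HS-derivations extending those on $A$ and $B$; this is exactly the $\L_\partial$-structure induced from $M$, and it makes $D$ a model of $T_\partial^\forall$. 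Thus $D\leq_{\L_\partial}M$ and the uniqueness clause holds.

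The main obstacle is the algebraic bookkeeping in the previous paragraph: verifying that a commuting family of iterative HS-derivations extends uniquely along the two base changes involved — the tensor/compositum step and the (separably, and then purely inseparably) algebraic extension step — while keeping track of iterativity and of the pairwise commutation conditions across the $e$ indices simultaneously. None of these are conceptually difficult and all are essentially in \cite{Zieg03}, but assembling them into the single statement "the induced $\L_\partial$-structure on $D$ is the unique one extending those of $A$ and $B$ and satisfying $T_\partial^\forall$" is where the real work lies; the field-theoretic part (separability $\Rightarrow$ regularity, regularity $+$ algebraic disjointness $\Rightarrow$ linear disjointness) is identical to Lemma~\ref{oplikeSCF}.
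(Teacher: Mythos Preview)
There is a genuine gap. Your claim ``since $C\leq_{\L_\partial} A$ the field extension $A/C$ is separable'' is false: unlike Lemma~\ref{oplikeSCF}, where the $\lambda$-functions in $\L_{b,\lambda}$ force separability of substructure extensions, the language $\L_\partial$ contains only the Hasse--Schmidt operators, and nothing in $T_\partial$ prevents $C$ and $A$ from both carrying the trivial iterative HS-structure (all $\partial_{i,j}=0$ for $j\geq 1$) while $A/C$ is purely inseparable. Without separability you lose regularity, hence linear disjointness, hence the identification of $A\cdot B$ with the fraction field of $A\otimes_C B$ on which your tensor-product extension argument rests.

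The paper repairs this by first enlarging $C$: using Lemmas~2.3 and~2.4 of \cite{Zieg03} one passes to a purely inseparable extension of the separable closure of $C$ and thereby assumes $C$ is \emph{strict} and separably closed. Strictness is precisely what forces $A/C$ to be separable (if $c\in C\cap A^p\setminus C^p$ then $\partial_{i,1}(c)=0$ for all $i$, contradicting strictness); from that point your regularity and linear-disjointness argument goes through. Your treatment of condition~(ii) has the same defect: iterative HS-derivations do \emph{not} in general extend uniquely along purely inseparable extensions, and the parenthetical about $D$ lying inside a perfect field does not supply uniqueness. The paper again invokes the smallest strict extension of \cite[Lemma~2.4]{Zieg03}, together with \'etaleness of separably algebraic extensions, to handle this step.
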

\begin{proof}
Let $A,B,C\models T_{\partial}^{\forall}$ be as the definition of derivation-like. By Lemma 2.3 and 2.4 from \cite{Zieg03}, after possibly passing to a purely inseparable extension of the separable closure of $C$, we may assume that $C$ is strict and separably closed. Strictness implies that $A/C$ is a separable extension. Thus, since $C$ is separably closed, $A/C$ is a regular field extension; this implies that $A$ and $B$ are linearly disjoint over $C$. It follows that $A\cdot B$ is isomorphic to the quotient field of $A\otimes_C B$. The Hasse-Schmidt derivations extend uniquely to $A\otimes_C B$ and this yields an $\L_\partial$-structure on $A\cdot B$ making it a model of $T_\partial$ (see for instance Lemma 2.5 of \cite{Zieg03}). This yields condition (i) of derivation-like. Since Hasse-Schmmidt fields have a smallest strict extension (see \cite[Lemma 2.4]{Zieg03}) and separably algebraic extensions are \'etale, condition (ii) of derivation-like follows.
\end{proof}

The model companion of $T_{\partial}$ is $\SCH_{p,e}$ (using the notation from \cite{Zieg03}). Recall that the latter is the theory of fields equipped with strict and iterative Hasse-Schmidt derivations that pairwise commute and whose underlying field is a model of $\SCF_{p,e}$. As in the $\SCF$ case above, for any $M\models \SCH_{p,e}$ we have that $\dcl^{\ACF_p}(M)\models \ACF_p$. Thus, Corollary~\ref{neostability-transfers}(1) applies and recovers the fact that $\SCH_{p,e}$ is stable and in the language $\L_{\partial}$ forking independence coincides with algebraic disjointness.

\

\subsection{$\D$-fields in characteristic zero}

Let $\L_0$ be an expansion of the field language and $T_0$ a complete and model complete $\L_0$-theory of fields of characteristic zero. As before, we denote by $\ind^0$ an invariant ternary relation on a monster model $\UU_0\models T_0$. Recall that $\ind^{\alg}$ denotes the algebraic disjointness relation. 

We say that $\ind^0$ \emph{implies algebraic disjointness} if $$K\ind^0_E L\implies K\ind^{\alg}_E L$$
for $K,L$ $\L_0$-substructures of $\UU_0$ and $E$ a common $\L_0$-substructure of $K$ and $L$. Recall from the previous section that $\ind^0$ \emph{implies $\L_0$-compositums} if for all $K, L\leq_{\L_0}\UU_0$ satisfying $K\ind_E^0 L$, for some $E=\acl_0(E)\cap K=\acl_0(E)\cap L$, the compositum $K\cdot L$ is an $\L_0$-substructure of $\UU_0$. Recall also that $T_0$ is \emph{very $\L_0$-slim} if for every $F\leq_{\L_0}\UU_0$ we have that 
$$\acl_0(F)=F^{\alg}\cap \UU_0.$$

Following the general framework of Moosa-Scanlon from \cite{MooScan14}, let $\D$ be a finite-dimensional algebra over a field $k$ of characteristic zero equipped with a $k$-basis $\epsilon_0=1,\epsilon_1,\dots,\epsilon_d$ such that $\D$ is a local ring with residue field $k$. A $\D$-field $K$ is a field which is also a $k$-algebra equipped with a sequence of operators $(\partial_i:K\to K)_{i=1}^d$ such that the map $K\to K\otimes_k\D$ defined by
$$a\mapsto a\otimes \epsilon_0 + \partial_1(a)\otimes\epsilon_1+\cdots \partial_d(a)\otimes\epsilon_d$$
is a $k$-algebra homomorphism. Let $\L_\D$ be the language $\L_0$ expanded by the language of $k$-algebras and the unary function symbols $\{\partial_1,\dots,\partial_d\}$. Let $T_\D$ be $\L_\D$-theory consisting of $T_0$ together with the theory of $\D$-fields. In addition, let $T_{\D^*}$ be $T_\D$ expanded by sentences specifying that the $\partial_i$'s pairwise commute.

\begin{remark}\label{differential}
Let $\D=\mathbb Q[x_1,\dots,x_d]/(x_1,\dots,x_d)^2$. In this case, the theory $T_\D$ is the theory of differential fields of characteristic zero with $d$ many (not necessarily commuting) derivations whose underlying field is a model of $T_0$. The theory $T_{\D^*}$ is similar but requires the derivations to pairwise commute.
\end{remark}

\begin{lemma}
Suppose $\ind^0$ implies algebraic disjointness and $\L_0$-compositums. Also assume $T_0$ is very $\L_0$-slim. Then, the theories $T_\D$ and $T_{\D^*}$ are derivation-like with respect to $(T_0,\ind^0)$.
\end{lemma}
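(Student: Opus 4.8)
The plan is to check the two clauses of Definition~\ref{like} for $T_\D$, with the adjustments for $T_{\D^*}$ folded in at the end. Fix $A,B,C\models T_\D^\forall$ as in that definition: $C$ is a common $\L_\D$-substructure of $A$ and $B$, we have $A,B\leq_{\L_0}\UU_0$ and $\acl_0(C)\cap A=\acl_0(C)\cap B=C$, and $A\ind^0_C B$. Since $\ind^0$ implies algebraic disjointness we get $A\ind^{\alg}_C B$, and since $\ind^0$ implies $\L_0$-compositums (applied with base $C$, which is legitimate because $\acl_0(C)\cap A=C=\acl_0(C)\cap B$) the compositum $A\cdot B$ is an $\L_0$-substructure of $\UU_0$. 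Using that $T_0$ is very $\L_0$-slim, the hypothesis $\acl_0(C)\cap A=C$ becomes the purely field-theoretic assertion that $C$ is relatively algebraically closed in $A$; in characteristic zero this makes $A/C$ a regular extension, and combined with $A\ind^{\alg}_C B$ it gives that $A$ and $B$ are linearly disjoint over $C$. Hence $A\otimes_C B$ is a domain and the canonical map $A\otimes_C B\to A\cdot B$ identifies it with the subring of $\UU_0$ generated by $A\cup B$.

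For clause (i), recall from the Moosa--Scanlon calculus \cite{MooScan14} that the $\D$-ring structures on $A$ and $B$, which agree on the common $\D$-subring $C$, extend uniquely to a $\D$-ring structure on $A\otimes_C B$, and hence, by the quotient rule (uniqueness of extension of $\D$-operators along localisations), to the subfield $A\cdot B$; thus $A\cdot B$ is a $\D$-field with $A,B\leq_{\L_\D}A\cdot B$. Choose by downward L\"owenheim--Skolem a small $\L_0$-elementary substructure $M\leq_{\L_0}\UU_0$ containing $A\cdot B$; then $M\models T_0$. Since in characteristic zero a $\D$-field structure extends (non-canonically) along any field extension \cite{MooScan14}, extend the $\D$-structure of $A\cdot B$ to $M$. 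Then $M\models T_\D$, $M\leq_{\L_0}\UU_0$, and $A,B\leq_{\L_\D}M$, which is (i).

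For clause (ii), let $M\models T_\D$ with $M\leq_{\L_0}\UU_0$ and $A,B\leq_{\L_\D}M$, and let $D$ be an $\L_0$-structure with $\langle A,B\rangle_{\L_0}\leq_{\L_0}D\leq_{\L_0}\acl_0(A,B)\cap M$. Since $T_0\subseteq T_\D$, by Remark~\ref{aclA-and-useforCCM}(3) we may assume $D$ is $\dcl_0$-closed, whence $D$ is a subfield of $M$ containing $A\cdot B$. By very $\L_0$-slimness, $\acl_0(A,B)\cap M=(A\cdot B)^{\alg}\cap M$, which is closed under the $\partial_i$ of $M$ because each $\partial_i^M$ sends an element $x$ algebraic over the $\D$-field $A\cdot B$ into $(A\cdot B)(x)$ (implicit differentiation of the minimal polynomial, using separability). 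The same computation shows $\partial_i^M(x)\in(A\cdot B)(x)\subseteq\dcl_0(D)=D$ for $x\in D$, so $D\leq_{\L_\D}M$. For uniqueness: any $\D$-structure on $D$ extending those of $A$ and $B$ must, restricted to $A\cdot B$, extend those of $A$ and $B$ over $C$, and so is forced by the tensor-product property; and on the separably algebraic extension $D/(A\cdot B)$ it is then forced too, since in characteristic zero $\D$-operators extend uniquely to \'etale extensions. For $T_{\D^*}$, one runs exactly the same argument and observes that each commutator $[\partial_i,\partial_j]$ is a $k$-linear operator vanishing on $A$ and on $B$, hence vanishing on $A\otimes_C B$ by the uniqueness just used, and then on $A\cdot B$ and on $D$; so the structures produced satisfy $T_{\D^*}^\forall$.

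The step I expect to be the real obstacle is the passage from the model-theoretic hypothesis $\acl_0(C)\cap A=C$ to genuine field-theoretic relative algebraic closedness of $C$ in $A$, and thence to linear disjointness of $A$ and $B$ over $C$: this is precisely where very $\L_0$-slimness enters, and it requires care in tracking the distinctions between an $\L_0$-substructure, the integral domain it is, and its fraction field, so that the tensor-product and quotient-rule facts of \cite{MooScan14} genuinely apply. Everything after linear disjointness is bookkeeping: transporting $\D$-operators along injective ring maps, localisations, and \'etale extensions, and reading off the corresponding uniqueness.
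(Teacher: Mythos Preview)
Your argument is essentially the paper's: use very $\L_0$-slimness to turn $\acl_0(C)\cap A=C$ into relative algebraic closedness, get linear disjointness from this plus $\ind^{\alg}$, push the $\D$-structure through $A\otimes_C B$ to $A\cdot B$ and then extend to a model, and invoke uniqueness of $\D$-structures along \'etale extensions for clause~(ii). The paper cites \cite{BHKK19} rather than \cite{MooScan14} for the $\D$-calculus, and extends directly to $\UU_0$ rather than to a small elementary substructure, but these are cosmetic.

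There is one genuine gap in your treatment of $T_{\D^*}$, clause~(i). You say ``one runs exactly the same argument'', but your clause~(i) for $T_\D$ extends the $\D$-structure from $A\cdot B$ to $M$ \emph{non-canonically}, and an arbitrary such extension need not have commuting operators. Your commutator argument shows $[\partial_i,\partial_j]$ vanishes on $A\otimes_C B$, on $A\cdot B$, and on $D$---which suffices for clause~(ii)---but says nothing about the elements of $M$ transcendental over $A\cdot B$. The paper handles this by first extending to a transcendence basis of $M$ over $A\cdot B$ in a trivial way (e.g.\ setting all $\partial_i$ to zero there), which forces commutativity on that basis, and then using the unique extension to the separable algebraic closure, where commutativity is automatically preserved by uniqueness. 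You should insert this step.
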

\begin{proof}
    First we prove $T_\D$ is derivation-like. Let $K,L,E\models T_\D^\forall$ be as in the definition of derivation-like. Since $E=\acl_0(E)\cap K$ and $T_0$ is very $\L_0$-slim, $K/E$ is a regular field extension. Since $\ind^0$ implies algebraic disjointness, it follows that $K$ and $L$ are linearly disjoint over $E$. Then $K\cdot L$ is isomorphic to the quotient field of $K\otimes_E L$. Since $\ind^0$ implies $\L_0$-compositums and $\D$-structures extend uniquely to $K\otimes_E L$ (see \cite[Proposition 2.20]{BHKK19}), this yields an $\L_\D$-structure on $K\cdot L$. As we are in characteristic zero, this $\D$-structure extends to all of $\UU_0$ (recall that $\D$-structures always extend to smooth extensions, see \cite[Lemma 2.7(3)]{BHKK19}) which yields condition (i) of derivation-like. Since algebraic extensions are \'etale (in characteristic zero), condition (ii) follows (recall that $\D$-structures extend uniquely to \'etale extensions, see \cite[Lemma 2.7(2)]{BHKK19}).

For $T_{\D^*}$, the same argument works by simply noting that uniqueness of the $\D$-structure on $K\otimes_E L$ forces the $\partial_i$'s to commute. And similarly when passing to algebraic extensions (as they are \'etale in characteristic zero). To extend the $\D$-structure from $K\cdot L$ to $\UU_0$, first extend to a transcendence basis in a trivial way to force commutativity of the $\partial_i$'s and after this the unique extension to $\UU_0$ will necessarily commute. This sort of argument is spelled out in Example 4.4.11 of the second author's thesis \cite{Moha24}. 
\end{proof}

For the remainder of this section we set $\ind^0$ to be the relation we defined in (1) at the end of Section~\ref{mainresults}.
\begin{equation}\label{defalgdis}
A\ind^0_C B \quad \iff \quad \langle A\, C\rangle_{\L_0}\ind^{\alg}_{\langle C\rangle_{\L_0}} \langle B\, C\rangle_{\L_0}
\end{equation}

Note that this particular relation implies algebraic disjointness. We recall Fact~\ref{fact-l0-compositumns} along with an additional fact.

\begin{fact}
    \begin{enumerate}
    \item Assume $\ind^0$ implies $\L_0$-compositums. The relation $\ind^0$ as defined in \eqref{defalgdis} is an independence relation if and only if $T_0$ is very $\L_0$-slim.
    \item Suppose $\L_0=\L_{fields}(C)$ where $C$ is a set of constant symbols. If models of $T_0$ are large fields, then $T_0$ is very $\L_0$-slim.
    \end{enumerate}
\end{fact}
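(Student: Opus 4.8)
The plan is to handle the two parts separately. Part~(1) needs no new work: the ternary relation of \eqref{defalgdis} is exactly the relation \eqref{defalgdis1} read in the present language, so the stated equivalence is literally Fact~\ref{fact-l0-compositumns} (whose proof, as recorded there, adapts Theorem~2.1 of \cite{JunKo2010}). For part~(2) I must show that $\acl_0(F)=F^{\alg}\cap \UU_0$ for every $F\leq_{\L_0}\UU_0$. One inclusion is immediate: an element of $\UU_0$ annihilated by a nonzero polynomial over the field $F$ has only finitely many conjugates, so $F^{\alg}\cap \UU_0\subseteq\acl_0(F)$. The content lies in the reverse inclusion, which I would restate as: if $a\in\UU_0$ is transcendental over $F$, then every $\L_0$-formula over $F$ satisfied by $a$ has infinitely many solutions in $\UU_0$, and hence $a\notin\acl_0(F)$. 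This is the only point where largeness of the models of $T_0$ is used.

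So fix such an $a$ together with a formula $\phi(x)\in\tp_0(a/F)$. Using model completeness of $T_0$ I may take $\phi(x)=\exists\bar y\,\psi(x,\bar y)$ with $\psi$ quantifier-free over $F$; since $\L_0$ is the language of fields together with constants (all of which lie in $F$), $\psi$ defines an $F$-constructible subset of affine space. Fix a tuple $\bar b\in\UU_0$ with $\UU_0\models\psi(a,\bar b)$, set $F'=F(a,\bar b)\cap F^{\alg}$ (a finite extension of $F$ inside $\UU_0$), and let $V$ be the affine $F'$-variety whose generic point is $(a,\bar b)$. In characteristic zero $F'$ is relatively algebraically closed in its function field $F'(a,\bar b)=F'(V)$, so $F'(V)/F'$ is a regular extension and $V$ is geometrically integral; moreover $\dim V=\operatorname{trdeg}(F'(a,\bar b)/F')\ge 1$ because $a$ is transcendental, and $\psi$, which holds at the generic point of $V$, holds on a dense open $U\subseteq V$. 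Finally $(a,\bar b)$ is a smooth $\UU_0$-point of $V$, since it lies in every dense $F'$-open of $V$ and hence in the smooth locus (which is dense by generic smoothness in characteristic zero).

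Now I invoke largeness of $\UU_0$: since $V$ is integral with a smooth $\UU_0$-point, $V(\UU_0)$ is Zariski dense in $V$, and then, using irreducibility of $V$, so is $U(\UU_0)$. The first-coordinate projection $\pi\colon V\to\mathbb{A}^1$ is dominant, as it carries the generic point of $V$ to $a$, which is transcendental over $F'$; hence the $\pi$-preimage of any finite subset of $\mathbb{A}^1$ is a proper closed subvariety of $V$, so $\pi(U(\UU_0))$ cannot be finite. Every $c\in\pi(U(\UU_0))$ lifts to a $\UU_0$-point of $U$, at which $\psi$ holds, whence $\UU_0\models\phi(c)$; thus $\phi$ has infinitely many solutions, completing part~(2). (In positive characteristic the argument goes through after one additionally assumes the models to be perfect, so that $F'(V)/F'$ may be arranged separable; alternatively one cites Junker-Koenigsmann directly.) The genuinely non-routine step is precisely this appeal to largeness: it is what converts a single transcendental solution of $\exists\bar y\,\psi$ into infinitely many. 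Over an algebraically closed base this would be the triviality that a constructible subset of $\mathbb{A}^1$ containing a transcendental element is cofinite, but over a real closed or PAC base that fails, and one genuinely needs the Zariski density of rational points on the variety cut out by $\psi$.
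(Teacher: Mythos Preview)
Your proposal is correct. The paper does not supply its own proof of this Fact: part~(1) is a restatement of Fact~\ref{fact-l0-compositumns}, and for part~(2) the authors only record that it is an adaptation of Theorem~5.4 of \cite{JunKo2010} and refer to Lemma~4.4.10 of \cite{Moha24} for details. Your argument for part~(2) is precisely that adaptation---pass to an existential formula via model completeness, take the locus of a witness over the relative algebraic closure of $F$ in $F(a,\bar b)$ to obtain a geometrically integral positive-dimensional variety with a smooth $\UU_0$-point, and invoke largeness to get Zariski density of rational points and hence infinitely many realisations of $\phi$---so you are reconstructing exactly what the cited references do.
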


We note that (2) is an adaptation of Theorem 5.4 of \cite{JunKo2010} and appears in Lemma 4.4.10 of the second author's thesis \cite{Moha24}.

\begin{corollary}\label{specialcase}
    Suppose models of $T_0$ are large fields, $\L_0=\L_\text{fields}(C)$ for $C$ a set of constant symbols, and $\ind^0$ is given as in \eqref{defalgdis}. Assume $T_\D$ and $T_{\D^*}$ have model companions $T_\D^+$ and $T_{\D^*}^+$, respectively.
    \begin{itemize}
        \item [(i)] If $T_0$ is simple, then so are $T_\D^+$ and $T_{\D^*}^+$.
        \item [(ii)] If $T_0$ is NSOP$_1$, then so are $T_\D^+$ and $T_{\D^*}^+$.
        \item [(iii)] Assume $T_0$, $T_\D^+$ and $T_{\D^*}^+$ all eliminate imaginaries. If $T_0$ is rosy, then so are $T_\D^+$ and $T_{\D^*}^+$.
    \end{itemize}
\end{corollary}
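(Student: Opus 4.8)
The plan is to verify the hypotheses of the relevant transfer results from Section~\ref{mainresults}, for each of $T_\D$ and $T_{\D^*}$. The common setup is the following: by the preceding fact (part (2)), since models of $T_0$ are large fields and $\L_0 = \L_{\text{fields}}(C)$, the theory $T_0$ is very $\L_0$-slim; and the relation $\ind^0$ of \eqref{defalgdis} implies algebraic disjointness by construction. I would first record that $\ind^0$ implies $\L_0$-compositums — this needs to be checked, but follows from the fact that $\ind^0$ implies algebraic disjointness together with very $\L_0$-slimness (two algebraically disjoint substructures that are relatively $\acl_0$-closed over a common base generate a substructure, since the compositum of linearly disjoint fields over $E$ lands in $\acl_0$ of the pair, which by slimness is the field-theoretic algebraic closure). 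Consequently, by Fact~\ref{fact-l0-compositumns}, $\ind^0$ is an independence relation on $T_0$. Since models of $T_0$ are fields of characteristic zero and $T_0 \subseteq T_\D, T_{\D^*}$ — as $\D$-fields have underlying field a model of $T_0$ and contain no new "missing" axioms — the hypotheses $T_0 \subseteq T$ needed throughout are in place; similarly $\dcl_0(M) \models T_0$ for $M$ a model of either model companion, since $T_0$ is model complete. By the previous lemma, $T_\D$ and $T_{\D^*}$ are derivation-like with respect to $(T_0, \ind^0)$.

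For (i), I would invoke Corollary~\ref{neostability-transfers}(2): taking $\ind^0$ above — which is forking independence when $T_0$ is simple, since any independence relation satisfying the independence theorem over models coincides with nonforking by Theorem~\ref{neostability}(ii), and $\ind^0$ does satisfy it as $T_0$ is simple and very $\L_0$-slim — we get that $T_\D^+$ and $T_{\D^*}^+$ are simple (and $\ind^+$ is nonforking). Strictly, one should argue that for a simple very $\L_0$-slim field theory the relation \eqref{defalgdis} is nonforking; this is the adaptation of Junker–Koenigsmann referenced after Fact~\ref{fact-l0-compositumns}. For (ii), I would apply Corollary~\ref{corollay-nsop-transfers} with $\ind^1$ taken to be the same relation \eqref{defalgdis} (which the corollary calls $\ind^1$ and labels \eqref{defalgdis1}): we have shown $T_0$ is very $\L_0$-slim, $\ind^1$ implies $\L_0$-compositums, $T$ is derivation-like with respect to $(T_0,\ind^1)$, and $T_0\subseteq T$; since $T_0$ is NSOP$_1$ and $\ind^0$ (Kim-independence in $T_0$) is again the relevant relation, the corollary gives that $T_\D^+$ and $T_{\D^*}^+$ are NSOP$_1$. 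For (iii), I would apply Corollary~\ref{rosy-transfers}: $T_0$, $T_\D^+$, and $T_{\D^*}^+$ all eliminate imaginaries by assumption, and $T_0$ is rosy, so the transfer is immediate once we note $T_\D, T_{\D^*}$ are derivation-like with respect to $(T_0, \ind^0)$ where $\ind^0$ may be taken to be a strict independence relation witnessing rosiness — and the relation \eqref{defalgdis} is strict for a very $\L_0$-slim field theory via anti-reflexivity (algebraic self-independence forces membership in $\acl_0$).

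The main obstacle is bookkeeping rather than a genuine mathematical difficulty: one must be careful that in each of the three cases the relation \eqref{defalgdis} is the "correct" abstract relation — nonforking independence in case (i), Kim-independence in case (ii), a strict independence relation in case (iii) — so that the hypotheses "$\ind^0$ is nonforking independence" (resp. Kim-independence, strict) of the cited corollaries are literally met. For (i) and (ii) this is exactly the content of the Junker–Koenigsmann-style analysis of algebraic disjointness in very slim fields, cited via Fact~\ref{fact-l0-compositumns} and the surrounding remarks; for (iii) strictness is checked directly. The remaining verifications (very $\L_0$-slimness, $\L_0$-compositums, $T_0 \subseteq T$, $\dcl_0(M) \models T_0$) are routine consequences of largeness of the fields and of $T_0$ being model complete in the field language with constants.
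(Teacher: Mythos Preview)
Your proof follows the paper's approach exactly: each part is reduced to the corresponding transfer corollary (Corollary~\ref{neostability-transfers}(2), Corollary~\ref{corollay-nsop-transfers}, and Corollary~\ref{rosy-transfers}), and the paper's own proof does no more than cite these.

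The one step you rightly flag as needing care---showing in~(i) that the relation~\eqref{defalgdis} literally coincides with nonforking in a simple very $\L_0$-slim field theory---is not actually supplied by the Junker--Koenigsmann reference (that fact only gives that \eqref{defalgdis} is an independence relation), and your sketch (``$\ind^0$ satisfies the independence theorem as $T_0$ is simple and very $\L_0$-slim'') begs the question. You can bypass this identification entirely. The preceding lemma establishes that $T_\D$ and $T_{\D^*}$ are derivation-like with respect to \emph{any} $\ind^0$ that implies algebraic disjointness and $\L_0$-compositums. In a simple field theory nonforking independence implies algebraic disjointness (the same fact from \cite{Ram18} invoked in the proof of Corollary~\ref{corollay-nsop-transfers}), and when $\L_0 = \L_{\text{fields}}(C)$ every relation trivially implies $\L_0$-compositums, since any subfield containing the constants is already an $\L_0$-substructure. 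Hence the lemma applies with $\ind^0$ taken to be nonforking itself, and Corollary~\ref{neostability-transfers}(2) then applies verbatim. Your arguments for~(ii) and~(iii) are correct as written.
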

\begin{proof}
(i) follows immediately by Corollary~\ref{neostability-transfers}(2), (ii) by Corollary~\ref{corollay-nsop-transfers}, (iii) by Corollary~\ref{rosy-transfers}.
\end{proof}


\begin{remark}\label{applications}
    Under the hypothesis of Corollary~\ref{specialcase} (and recall that we are in characteristic zero), an immediate application is that
    \begin{itemize}
        \item [(i)] if $T_0$ is the theory of a bounded PAC field , then $T_\D^+$ and $T_{\D^*}^+$ are simple,
        
        \item [(ii)] the theory CODF (closed ordered differential field in one derivation) is rosy.
    \end{itemize}
    Indeed, for (i) recall that bounded PAC fields are simple; while for (ii), recall that the theory RCF is rosy and eliminates imaginaries; also, CODF eliminates imaginaries by \cite{CKP23}.
\end{remark}

We also note that under the hypothesis of Corollary~\ref{specialcase} the model companions $T_\D^+$ and $T_{\D^*}^+$ in fact exist. Existence of $T_\D^+$ is one of the main results of the second author's paper \cite{Moha23} and, moreover, the simplicity claimed in Remark~\ref{applications}(i) already appears there. Existence of $T_{\D^*}^+$ will appear in a forthcoming paper.\footnote{ As part of joint work of the first author with Jan Dobrowolski.} In the case when $\D=\mathbb Q[x_1,\dots, x_d]/(x_1,\dots x_d)^2$ (i.e., in the context of differential fields of characteristic zero, see Remark~\ref{differential}) and $T_0$ is the theory of a bounded PAC field, the existence of $T_{\D^*}^+$ is an instance of the main result of \cite{Tre05} and its simplicity already appears in \cite{HoffLS22}. 

A natural question is whether Corollary~\ref{corollay-nsop-transfers} applies when $T_0$ is the theory of an $\omega$-free PAC field of characteristic $0$. The authors do not know if such a theory can be made model complete in a language $\L_\text{fields}(C)$ for some set of constant symbols $C$, which is required for Corollary~\ref{specialcase} and to argue as above that
$T_\D^+$ and $T_{\D^*}^+$ exist. However, in the case $\D=\mathbb Q[x_1,\dots, x_d]/(x_1,\dots x_d)^2$ -- the context of differential fields of characteristic zero, see Remark~\ref{differential} -- we may use results of Fornasiero and Terzo \cite{ForTer24} on 
algebraically bounded structures with generic derivations. Any PAC field of characteristic zero is very $\L_\text{fields}$-slim by Corollary~4.5 
of \cite{CP98}. In Section~30.2 of \cite{FriJar86}, the authors expand by definitions an $\omega$-free PAC field to a language $\L_\text{fields}(R_n \colon n > 0)$ and show that, in this language, $T_0$ is model complete. Here the symbols $R_n$ are predicates that ensure that extensions are regular field extensions. This theory is then very $\L_\text{fields}(R_n: n>0)$-slim, and hence algebraically bounded. By Theorems~4.5 and~5.12 of \cite{ForTer24}, $T_\D^+$ and $T_{\D^*}^+$ exist. By Corollary~\ref{corollay-nsop-transfers} of this paper, both are NSOP$_1$.

\

\subsection{Differential fields in positive characteristic}

In this section we apply our results in the context of separably differentially closed fields \cite{InoLS23}. Fix a prime $p>0$. Let $\L_\lambda$ be the language of fields expanded by the (countably many) $\lambda$-functions. Namely, by functions $(\lambda_{n,i}:n\in \mathbb \omega, i\in p^n)$ where $\lambda_{n,i}$ is $(n+1)$-ary. We denote by $\SCF_{p,\infty}^\lambda$ the theory of separably closed fields of infinite (algebraic) degree of imperfection expanded by sentences specifying that the $\lambda_{n,i}$ are to be interpreted as the $\lambda$-functions; that is, for $(\bar a,b)$, if $\bar a$ is $p$-dependent or $(\bar a,b)$ is $p$-independent then $\lambda_{n,i}(\bar a,b)=0$; otherwise, 
$$b=\sum_{i\in p^n}(\lambda_{n,i}(\bar a,b))^p\, m_i(\bar a)$$
where the $m_i(\bar a)$'s denote the $p$-monomials (with some fixed order).
We denote forking independence in $\SCF_{p,\infty}^\lambda$ by $\ind^0$, and $\UU_0$ is a monster model. Recall from \cite{Srour86} that for $\L_\lambda$-substructures $K,L,E$ of $\UU_0$ we have
$$K\ind^0_E L \quad \iff \quad \text{$K$ and $L$ are algebraically disjoint and $p$-disjoint over $E$.}$$

Let $\L_{\lambda,\delta}$ be the expansion of $\L_\lambda$ by a (single) unary function symbol $\delta$ and let $\DF_p^\lambda$ be the theory of differential fields of characteristic $p$ with sentences specifying that the $\lambda_{n,i}$ are the $\lambda$-functions.

\begin{lemma}
The theory $\DF_p^\lambda$ is derivation-like with respect to $(\SCF_{p,\infty}^\lambda, \ind^0)$.
\end{lemma}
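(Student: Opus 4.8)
The plan is to adapt the proof of Lemma~\ref{oplikeSCF}, using in addition that the $p$-disjointness built into $\ind^0$ (see \cite{Srour86}) governs how the $\lambda$-functions propagate to compositums. So let $A,B,C\models(\DF_p^\lambda)^\forall$ be as in Definition~\ref{like}. Since the language of fields contains $^{-1}$, these are differential fields of characteristic $p$ with $\lambda$-functions, $A,B\leq_{\L_\lambda}\UU_0$, $C$ is a common differential $\lambda$-subfield of $A$ and $B$, $\acl_0(C)\cap A=\acl_0(C)\cap B=C$, and $A$ and $B$ are algebraically disjoint and $p$-disjoint over $C$. Recall that $\SCF_{p,\infty}^\lambda$ has quantifier elimination and is very $\L_\lambda$-slim, so $\acl_0(F)=F^{\alg}\cap\UU_0$ for every $\L_\lambda$-substructure $F$ of $\UU_0$; in particular $C=C^{\alg}\cap A$, so $C$ is relatively algebraically closed in $A$. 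As $p$-independence of a tuple is a quantifier-free $\L_\lambda$-condition (this is one role of the $\lambda$-functions), it is absolute between $\L_\lambda$-substructures of $\UU_0$; since $C\leq_{\L_\lambda}A$, every $p$-independent subset of $C$ stays $p$-independent in $A$, so by Mac~Lane's separability criterion $A/C$ is separable, hence regular, and likewise $B/C$ is regular.

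For part (i): since $A$ and $B$ are algebraically disjoint over $C$ and $A/C$ is regular, $A$ and $B$ are linearly disjoint over $C$, so $A\otimes_C B$ is a domain whose fraction field is the compositum $K:=A\cdot B$ formed inside $\UU_0$. The derivations $\d_A,\d_B$ agree on $C$, so there is a unique derivation on $A\otimes_C B$ extending both (defined on pure tensors by the Leibniz rule), which extends uniquely to $K$; equip $K$ with this derivation and with its canonical $\lambda$-functions. By absoluteness of $p$-independence, a tuple from $A$ (respectively $B$) has the same $p$-independence status computed in $A$ (respectively $B$), in $K$, and in $\UU_0$, giving $A,B\leq_{\L_\lambda}K$; and $p$-disjointness of $A$ and $B$ over $C$ precisely ensures that every $p$-independent subset of $K$ remains $p$-independent in $\UU_0$, i.e.\ $K\leq_{\L_\lambda}\UU_0$. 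Hence $M:=K$ is a model of $\DF_p^\lambda$ with $M\leq_{\L_\lambda}\UU_0$ and $A,B\leq_{\L_{\lambda,\d}}M$, which is (i).

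For part (ii), let $M\models\DF_p^\lambda$ with $M\leq_{\L_\lambda}\UU_0$ and $A,B\leq_{\L_{\lambda,\d}}M$ be arbitrary. As above, $K=A\cdot B\leq_{\L_\lambda}M$, so $\langle A,B\rangle_{\L_\lambda}=K$ and $\acl_0(A,B)\cap M=K^{\alg}\cap M$. The main point — and the step I expect to be the crux — is that $K\leq_{\L_\lambda}\UU_0$ forces $K$ to contain every $p$-th root of an element of $K$ lying in $\UU_0$: if $x\in\UU_0$ with $x^p\in K$, then comparing $\lambda_{0,0}(x^p)$ — the ``$p$-th root if it exists in the field, $0$ otherwise'' function — computed in $K$ and in $\UU_0$ yields $x\in K$. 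Since $p$-bases are preserved by separable algebraic extensions, $K^{\mathrm{sep}}\cap\UU_0$ is likewise an $\L_\lambda$-substructure of $\UU_0$ with this property, and as every element of $K^{\alg}$ has a $p$-power in $K^{\mathrm{sep}}$, we get $K^{\alg}\cap\UU_0=K^{\mathrm{sep}}\cap\UU_0$. Consequently any $\L_\lambda$-structure $D$ with $\langle A,B\rangle_{\L_\lambda}\leq_{\L_\lambda}D\leq_{\L_\lambda}\acl_0(A,B)\cap M$ is a \emph{separable} algebraic extension of $K$ inside $M$. Now $\d_M|_K$ is a derivation extending $\d_A$ and $\d_B$, hence equals the derivation built on $K$, by the uniqueness above; as $D/K$ is separable algebraic, this derivation extends uniquely to a derivation $D\to D$, and that unique extension must coincide with $\d_M|_D$, so $D$ is closed under $\d_M$. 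Moreover $D\leq_{\L_\lambda}M$ because $p$-bases are preserved under separable algebraic extensions, so the induced $\L_{\lambda,\d}$-structure on $D$ is a model of $(\DF_p^\lambda)^\forall$; its uniqueness follows since any competing derivation restricts to that of $K$ and then, being a derivation on the separable algebraic extension $D$, is the unique one. This gives (ii), and the hardest part is recognising that the $\L_\lambda$-substructure requirement (and hence $p$-disjointness) is exactly what makes $\acl_0(A,B)$ a \emph{separable} extension of the compositum, which is needed because a derivation imposes no constraint on $p$-th roots of constants.
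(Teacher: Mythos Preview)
Your proof is correct and follows essentially the same route as the paper's: regularity of $A/C$ from the $\L_\lambda$-substructure condition, linear disjointness from algebraic disjointness, the unique derivation on $A\otimes_C B$ giving condition (i), and unique extension of derivations along separable algebraic extensions for condition (ii). The paper compresses (ii) into the single line ``since separably algebraic extensions are \'etale, condition (ii) follows,'' whereas you unpack this carefully; in particular your $\lambda_{0,0}$ argument showing $K^{\alg}\cap\UU_0=K^{\mathrm{sep}}$ is correct but slightly redundant, since your very-slim observation at the outset already gives $\acl_0(K)=K^{\mathrm{sep}}$ directly, and it is $\acl_0(A,B)\cap M$ (not $K^{\alg}\cap M$) that bounds $D$ in Definition~\ref{like}.
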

\begin{proof}
Let $K,L,,E\models (\DF_{p}^\lambda)^\forall$ be as in the definition of derivation-like. Since $K\ind_E^{0} L$, $K$ and $L$ are $p$-disjoint over $E$, and so $K\cdot L$ is an $\L_\lambda$-substructure of $\UU_0$.

On the other hand, since $E\leq_{\L_{\lambda,\delta}}K$, we have that $K/E$ is a separable field extension. This, together with the fact that $E=E^{\alg}\cap K$, implies that $K/E$ is a regular field extension. This, together with $K\ind_E^{0} L$, implies that $K$ and $L$ are linearly disjoint over $E$. Linear disjointness implies $K\cdot L$ is isomorphic to the quotient field of $K\otimes_E L$. This yields a derivation on $K\cdot L$ making it a model of $\DF_{p}^\lambda$. This yields condition (i) of derivation-like. Since separably algebraic extensions are \'etale, condition (ii) follows.
\end{proof}

For $\epsilon\in \mathbb N\cup \{\infty\}$, recall that a differential field $(K,\delta)$ of characteristic $p$ is said to have differential degree of imperfection $\epsilon$ if 
$$[C_K:K^p]=p^\epsilon.$$
Here $C_K$ denotes the field of $\delta$-constants of $(K,\delta)$. When $\epsilon=\infty$ the above equality should be understood as the degree $[C_K:K^p]$ being infinite. See \cite{InoLS23} for further details.

In \cite{InoLS23} it was shown that $\DF_p^{\lambda}$ has a model companion; namely, the theory $\SDCF_{p,\infty}^\lambda$ of separably differentially closed fields of characteristic $p$ of infinite differential degree of imperfection expanded by the $\lambda$-functions. We note that in \cite{InoLS23} the authors work in the language of the so-called differential $\lambda$-functions, denoted $\ell_{n,i}$, but the result on the existence of the model companion also holds working with the algebraic $\lambda$-functions (the argument is spelled out in \cite[Fact 4.4.16]{Moha24}).  We note that $\SCF_{p,\infty}^\lambda\subseteq \SDCF_{p,\infty}^\lambda$. Thus, Corollary~\ref{neostability-transfers}(1) applies and recovers the fact that $\SDCF_{p,\infty}^\lambda$ is stable; furthermore, it shows that, in the language $\L_{\lambda,\delta}$, forking independence coincides with algebraic disjointness and $p$-disjointness (which is not explicitly stated in \cite{InoLS23}).

\bigskip

We conclude this section by noting that, unfortunately, our results do not seem to apply to the theory $\DCF_p$, differentially closed fields of characteristic $p>0$, studied by Wood in \cite{Wood73}. Recall that $\DCF_p$ is the model companion of $\DF_p$, the theory of differential fields (of characteristic $p$) in the language of differential fields $\L_{\delta}$. In this language the theory $\DCF_p$ does not eliminate quantifiers, but in \cite{Wood73} Wood showed that it suffices to add the \emph{$p$-th root on constants} function $\ell_0$; namely, the unique function satisfying
\medskip

\begin{equation}\label{dpf}
\left\{
\begin{array}{ll}
(\ell_0(x))^p=x, & \text{when $\delta(x)=0$} \\
\ell_0(x)=0, & \text{otherwise} 
\end{array}
\right.
\end{equation}
\medskip

The theory of differentially perfect fields (i.e., those $(K,\delta)$ such that $C_K=K^p$) is denoted by $\DPF_p^{\ell_0}$ and can be axiomatised by expanding $\DF_p$ by a sentence specifying \eqref{dpf} above. It follows that $\DCF_p^{\ell_0}$ is the model completion of $\DPF_{p}^{\ell_0}$. One could ask whether $\DPF_p^{\ell_0}$ is derivation-like with respect to $\SCF_{p,\infty}^\lambda$ (note that the underlying field of a $\DCF_p$ is a model of $\SCF_{p,\infty}$). We now prove this is not the case.

\begin{lemma}\label{notforSCF}
    The theory $\DPF_p^{\ell_0}$ is {\bf not} derivation-like with respect to ($\SCF_{p,\infty}^{\lambda},\ind^0$).
\end{lemma}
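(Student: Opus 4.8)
The plan is to show that $\DPF_p^{\ell_0}$ fails condition (ii) of derivation-like by exhibiting a concrete configuration $A = B = C$ (a single differentially perfect field $E$) together with a separably algebraic extension $D/E$ on which the derivation extends, but on which the operator $\ell_0$ does \emph{not} extend compatibly — or, alternatively, on which there is more than one way to extend it, contradicting the uniqueness clause. The heart of the matter is the mismatch between two facts: derivations extend uniquely to separably algebraic (indeed étale) extensions, so condition (i) and the derivation part of (ii) go through, but the $p$-th-root-on-constants function $\ell_0$ need not have any coherent extension once we adjoin a separable algebraic element whose image under $\delta$ vanishes but which is not itself a $p$-th power in the new field.

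Concretely, I would work inside a monster model $\UU_0 \models \SCF_{p,\infty}^\lambda$ and pick a $\DPF_p^{\ell_0}$-substructure $E$ — for instance take $E$ to be differentially perfect with an element $t \in E$ which is $\delta$-constant (so $\delta t = 0$) but such that $t$ is \emph{not} a $p$-th power of a constant lying in the relevant separable extension; then pass to $D = E(t^{1/p^2})$ or some similar separably algebraic extension of $E$ (one must be a little careful about separability in characteristic $p$: the point is to adjoin a root of an element that is a constant, so that the extension, while separably algebraic over $E$ inside $\UU_0$ thanks to $E$ being non-perfect as a pure field, forces a constant whose $p$-th root behavior is not determined). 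On $D$ the unique derivation extending $\delta_E$ is the zero derivation on the new generator, but then $\ell_0$ applied to the new constant is forced by \eqref{dpf} to be its $p$-th root, which may not lie in $D$; hence no $\L_{\delta,\ell_0}^\forall$-structure on $D$ extends that of $E$, violating condition (ii) (and in fact already the weak form in Remark~\ref{aclA-and-useforCCM}(1), since we have taken $A=B=C=E$). I would then check that the hypotheses of Definition~\ref{like} are genuinely met: $E = \acl_0(E) \cap E$ trivially, $E \leq_{\L_0} \UU_0$, and $E \ind^0_E E$ holds automatically since $\ind^0$ (algebraic disjointness together with $p$-disjointness) is reflexive over any base.

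The step I expect to be the main obstacle is arranging the element so that the adjunction is genuinely \emph{separably} algebraic over $E$ while still producing a new $\delta$-constant that breaks the $\ell_0$-axiom: in characteristic $p$ adjoining a $p$-th root is purely inseparable, so one cannot naively write $D = E(t^{1/p})$. The fix is to exploit that a model of $\SCF_{p,\infty}$ is separably closed but far from perfect: one chooses $E$ carefully (not equal to its own perfect closure inside $\UU_0$) so that there is a separable polynomial over $E$ one of whose roots $\alpha$ is a $\delta$-constant with $\alpha \notin D^p$ for the resulting $D = E(\alpha)$. Equivalently, one can quote the relevant structure of constants in $\SCF_{p,\infty}$ — the field of constants of a model of $\DCF_p$ sits as a model of $\SCF_{p,\infty}$ with $[C_K : K^p]$ infinite — to locate a separable constant extension on which $\ell_0$ has no extension. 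Once the element is in hand, the verification that condition (ii) fails is immediate from the defining equations \eqref{dpf} of $\ell_0$.
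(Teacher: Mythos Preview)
Your approach has a genuine gap: the configuration $A=B=C=E$ cannot yield a counterexample, because condition (ii) of Definition~\ref{like} actually \emph{holds} for $\DPF_p^{\ell_0}$ relative to $(\SCF_{p,\infty}^\lambda,\ind^0)$ in that restricted case. The point is that $\L_\lambda$-extensions are separable: if $M\models \DPF_p^{\ell_0}$ with $E\leq_{\L}M\leq_{\L_\lambda}\UU_0$, then $M/E$ is separable, hence so is $M/D$ for any intermediate $\L_\lambda$-substructure $D$. But then the argument you yourself would use in reverse applies: a field which admits a separable differential extension that is differentially perfect is itself differentially perfect (if $c\in C_D\subseteq C_M=M^p$ then $c^{1/p}\in M\cap D^{1/p}=D$ by separability of $M/D$). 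So every such $D$ is already closed under $\ell_0$, and uniqueness is automatic since $\ell_0$ is determined by $\delta$. Your worry about separability is well founded but fatal: there is no separably algebraic extension producing a constant outside $D^p$ once you sit inside a differentially perfect $M$ with $M/D$ separable, and adjoining a $p$-th root is never separable.

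The paper instead shows that condition (i) fails, using genuinely distinct $A$ and $B$. Take $K=\mathbb F_p(t)$ and $L=\mathbb F_p(s)$ with the standard derivations $d/dt$ and $d/ds$, chosen inside $\UU_0$ so that $K\ind^0_{\mathbb F_p}L$. Both are differentially perfect. If there were an $M\models\DPF_p^{\ell_0}$ with $K,L\leq_{\L}M\leq_{\L_\lambda}\UU_0$, then $p$-disjointness of $K$ and $L$ over $\mathbb F_p$ forces $\mathbb F_p(t,s)\leq_{\L_\lambda}M$, so $M/\mathbb F_p(t,s)$ is separable and hence $\mathbb F_p(t,s)$ is differentially perfect. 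But $\delta(t-s)=1-1=0$ while $t-s$ has no $p$-th root in $\mathbb F_p(t,s)$, a contradiction. The obstruction lives in the compositum, not in a single field; this is exactly what your $A=B=C$ reduction throws away.
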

\begin{proof}
    Consider the function field $K=\mathbb F_p(t)$ with standard derivation $\delta=\frac{d}{dt}$. Note that $(K,\delta)\models \DPF_p$ since $C_K=\mathbb F_p$. Inside the model $\UU_0$ of $\SCF_{p,\infty}^{\lambda}$, find $s$ such that $t\ind^0_{\mathbb F_p}s$ and $\tp^{\SCF_{p,\infty}^{\lambda}}(t/\mathbb F_p)=\tp^{\SCF_{p,\infty}^{\lambda}}(s/\mathbb F_p)$. Equip $L=\mathbb F_p(s)$ with the derivation $\delta=\frac{d}{ds}$. We argue that there cannot be an $M$ as in condition (i) of derivation-like. It there were, $M$ would be a model of $\DPF_p^{\ell_0}$. In other words, $C_M=M^p$. Since $K\ind^0_{\mathbb F_p}L$, we obtain that $K$ and $L$ are $p$-disjoint over $\mathbb F_p$; and so $K\cdot L=\mathbb F_p(t,s)$ is an $\L_\lambda$-substructure of $M$. Hence, the extension $M/\mathbb F_p(t,s)$ is separable, which implies that $\mathbb F_p(t,s)$ is differentially perfect. But, since $\delta(t-s)=0$, this would imply that $t-s$ has a $p$-th root in $\mathbb F_p(t,s)$, which is impossible (as $t$ and $s$ are algebraically independent).
\end{proof}

One could further ask whether $\DPF_p^{\ell_0}$ is derivation-like with respect to $\ACF_p$. Again, this is not the case.

\begin{lemma}\label{notforACF}
The theory $\DPF_p^{\ell_0}$ is {\bf not} derivation-like with respect to ($\ACF_p,\ind^{\alg}$).
\end{lemma}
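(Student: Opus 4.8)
The plan is to argue exactly as in the proof of Lemma~\ref{notforSCF}, but now we must produce a counterexample to \emph{condition (ii)} of derivation-like rather than condition (i), since over $\ACF_p$ the amalgamation step of condition (i) is unproblematic (any two field extensions with $K\ind^{\alg}_E L$ amalgamate inside $\UU_0$ and one can extend the derivation). The obstruction instead comes from the \emph{uniqueness} clause: passing to an algebraic extension of $K\cdot L$ inside a model $M$ of $\DPF_p^{\ell_0}$, the differential structure is forced by the $\ell_0$ function on inseparable constants, and this forced structure need not agree with (indeed, may not extend to) a derivation on $K\cdot L$ itself, or there may be no way to extend at all in a way compatible with $\DPF_p^{\ell_0}$ being preserved.

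More concretely, I would take the same $K=\mathbb F_p(t)$ with $\delta=\frac{d}{dt}$ and $L=\mathbb F_p(s)$ with $\delta=\frac{d}{ds}$, with $t$ and $s$ algebraically independent over $\mathbb F_p$, so that $C=\acl^{\ACF_p}(\mathbb F_p)\cap K=\acl^{\ACF_p}(\mathbb F_p)\cap L=\mathbb F_p$ (an algebraically closed field) and trivially $K\ind^{\alg}_{\mathbb F_p}L$. Condition (i) is satisfiable: $K\cdot L=\mathbb F_p(t,s)$ carries the derivation extending both (the two partials commute, $\delta t=1$, $\delta s=1$ after fixing a suitable extension, or more carefully set $\delta$ on $K\cdot L$ extending the given derivations — note $t$ and $s$ are a transcendence basis so the derivation is determined by its values on them and one checks a choice exists making the whole of $\UU_0$ a model of $\DPF_p^{\ell_0}$, possibly after first adjoining $p$-th roots of constants). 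Then take $M\models\DPF_p^{\ell_0}$ with $M\leq_{\L_0}\UU_0$ and $K,L\leq_{\L_{\delta}}M$. Now consider the element $t-s\in K\cdot L$: it is a $\delta$-constant (since $\delta(t-s)=0$), it is \emph{not} a $p$-th power in $\mathbb F_p(t,s)$, but in $M$ (being differentially perfect, $C_M=M^p$) it must have a $p$-th root $u=\ell_0(t-s)$. The $\L_0$-structure $D=\mathbb F_p(t,s,u)$ then satisfies $\langle A,B\rangle_{\L_0}\subseteq D\subseteq \acl^{\ACF_p}(K\cdot L)\cap M$, so condition (ii) demands $D\leq_{\L_{\delta}}M$ and that this $\delta$-structure on $D$ be the \emph{unique} one expanding its field structure, making it a model of $(\DPF_p^{\ell_0})^{\forall}$, and extending the derivations on $A$ and $B$. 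I would then exhibit two distinct derivations on $D$ extending $\delta$ on $K$ and $L$ — for instance, one sending $u\mapsto 0$ (forced if $u$ is to remain a constant) and another obtained by choosing a different value for $\delta(u)$ consistent with $\delta(u^p)=\delta(t-s)=0$, which in characteristic $p$ imposes no constraint on $\delta(u)$; picking $\delta(u)=1$, say, and checking this still defines a derivation on the rational function field $\mathbb F_p(t,s,u)$ (it does, since $t,s,u$ are algebraically independent: $u^p=t-s$ has no nontrivial relation with $t,s$ themselves beyond this one, but $u$ is not a root of a separable polynomial over $\mathbb F_p(t,s)$, so $\mathbb F_p(t,s,u)$ is purely transcendental of the appropriate shape — here I would be careful and instead note $\mathbb F_p(t,s,u)=\mathbb F_p(s,u)$ with $t=u^p+s$, so $\{s,u\}$ is a transcendence basis and derivations are freely determined by values on $s$ and $u$). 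Since $\delta(u)$ is unconstrained, uniqueness fails, contradicting condition (ii).

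The main obstacle I anticipate is making sure the chosen $K$, $L$ genuinely satisfy \emph{all} hypotheses of the definition of derivation-like simultaneously — in particular that $\acl^{\ACF_p}(C)\cap A=\acl^{\ACF_p}(C)\cap B=C$ with $C=\mathbb F_p$ forces $C$ to be algebraically closed, and $\mathbb F_p$ is not; so I would instead take $C=\mathbb F_p^{\alg}$ (or an algebraically closed subfield of $\UU_0$), $K=C(t)$, $L=C(s)$, with $\delta$ the standard derivation on each \emph{over $C$} (i.e., $\delta|_C=0$), which are models of $(\DPF_p^{\ell_0})^{\forall}$ since $C$ is already differentially perfect ($C=C^p$ as it is algebraically closed, and $C_{C(t)}=C$). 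Then the argument proceeds as above with $t-s$ a nonconstant-free constant witnessing the failure. I expect the only real work is the bookkeeping verifying $D\subseteq\acl^{\ACF_p}(K\cdot L)\cap M$ (clear, since $u$ is algebraic over $K\cdot L$ and lies in $M$) and that uniqueness of the derivation on $D$ fails (clear once one identifies a transcendence basis of $D$ not containing $u$, so that $\delta(u)$ is a free parameter). This shows $\DPF_p^{\ell_0}$ is not derivation-like with respect to $(\ACF_p,\ind^{\alg})$.
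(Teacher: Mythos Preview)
There is a genuine gap in your uniqueness argument. Recall that the language $\L$ here includes the function symbol $\ell_0$, and the sentence $\forall x\,(\delta(x)=0\rightarrow(\ell_0(x))^p=x)$ is universal, hence lies in $(\DPF_p^{\ell_0})^\forall$. Thus any $\L$-structure on $D$ competing for non-uniqueness must make $D$ \emph{differentially perfect}: every $\delta$-constant must already be a $p$-th power in $D$. Neither of your proposed derivations does this: with $\delta(u)=0$ the element $u$ is a constant with no $p$-th root in $D=C(s,u)$, and with $\delta(u)=1$ the element $s-u$ is such a constant. Your assertion that ``$\delta(u)$ is unconstrained'' is exactly what fails once $\ell_0$ is taken into account. (The route is not unrescuable --- for instance $\delta(u)=u$ and, for $p>2$, $\delta(u)=2u$ each make $D$ differentially perfect --- but verifying this and then constructing a compatible $M$ is real additional work you did not carry out.)

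The paper does not attack uniqueness at all; it shows the \emph{first} clause of condition~(ii), namely $D\leq_\L M$, already fails for a well-chosen $M$. Take $K=\mathbb F_p(t)$ with $\frac{d}{dt}$, adjoin a differential transcendental $x$ to form $M:=K\langle x\rangle$, and set $s:=t+x^p$, $L:=\mathbb F_p(s)$. Then $K\ind^{\alg}_{\mathbb F_p}L$, and $x\in (K\cdot L)^{\alg}\cap M$ since $x^p=s-t$, but $\delta_M(x)$ is transcendental over $(K\cdot L)^{\alg}$ by construction; hence $(K\cdot L)^{\alg}\cap M$ is not a differential subfield of $M$. Your $u$ is essentially the paper's $x$, but the paper \emph{engineers} $M$ so that $\delta_M(x)$ is generic, forcing the substructure failure directly, whereas you left $M$ arbitrary and then reasoned about derivations on $D$ in isolation from the constraint imposed by $T^\forall$.
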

\begin{proof}
Consider the function field $K=\mathbb F_p(t)$ equipped with the standard derivation $\delta=\frac{d}{dt}$. Let $x$ be a differential indeterminate over $K$. Let $s:=t+x^p$. Then, the derivation on
$M:=K\langle x\rangle=K(x,\delta x,\dots)$
restricts to the standard derivation $\delta=\frac{d}{ds}$ on $L:=\mathbb F_p(s)$. Note that both $K$ and $L$ are differentially perfect and $K \ind_{\mathbb F_p}^{\alg}L$. However, the algebraic closure of the compositum $K\cdot L$ contains $x$ but it does not contain $\delta(x)$; namely, it is not a differential subfield. In other words, condition (ii) of derivation-like does not hold.
\end{proof}

\smallskip

\begin{remark} \
\begin{enumerate}
    \item [(i)] While $\DCF_p$ is a stable theory, the two proofs above show that forking independence does not have an obvious  algebraic description.\footnote{These examples grew out of discussions with Amador Martin-Pizarro.} Indeed,  the proof of Lemma~\ref{notforSCF} shows that $\ind^{0}=\ind^{\SCF_{p,\infty}}$ does not satisfy full existence in $\DCF_p$; while the proof of \ref{notforACF} shows that $\ind^{\alg}$ does not satisfy base monotonicity in $\DCF_p$. Currently the authors are not aware of an algebraic description of forking independence in this theory.

    \item [(ii)] We leave it as an exercise to check that $\DPF_p^{\ell_0}$ is almost derivation-like with respect to ($\ACF_p,\ind^{\alg}$) in the sense of Remark~\ref{almostder}, and hence Lemmas~\ref{diagram} and \ref{acl} apply to the theory $\DCF_p^{\ell_0}$.
\end{enumerate}
\end{remark}

\

\subsection{CCMs with meromorphic vector fields}

Our final examples demonstrates that our results apply beyond theories of fields. In this section we observe that the theory, recently formulated by Moosa \cite{Moo23}, of compact complex manifolds equipped with a ``differential" structure fits into our setup. 

\medskip

Recall that the theory CCM - compact complex manifolds - is the theory of the multi-sorted structure consisting of all compact complex manifolds (or rather all reduced and irreducible compact complex-analytic spaces) by naming as basic relations all closed complex-analytic subsets of finite cartesian products of sorts. See \cite{Moo05} for further details on this theory. However, in \cite{Moo23}, Moosa works in the seemingly more general setup of ``compactifiable" (rather than compact) complex-analytic spaces. Namely, he works in a definable expansion of CCM where there is a sort for each irreducible meromorphic variety. We denote by $\L_0$ the language of this expansion and continue to denote the theory of the expanded structure by CCM. The advantage of this expansion is that now sorts are closed under taking tangent bundles. We refer the reader to \cite[\S 2]{Moo23} for further details and explanations.

In the language $\L_\nabla=\L_0\cup\{\nabla_S:S\text{ is a sort of } \L_0\}$, where each $\nabla_S$ is a function symbol from sort $S$ to $TS$, Moosa considers the universal $\L_\nabla$-theory $\CCM_{\nabla}^\forall$ obtained by adding to $\CCM^\forall$ axioms specifying that $\nabla_S: S\to TS$ is a section to $\pi:TS \to S$ together with a compatibility condition of $\nabla$ with definable meromorphic maps between sorts (see \cite[Definition 3.3]{Moo23}). It turns out that, somewhat unintentionally, Moosa has proven that $\CCM_\nabla^\forall$ is derivation-like.

\begin{lemma}
    The theory $\CCM_\nabla^\forall$ is derivation-like with respect to $(\CCM,\ind^0)$ (here $\ind^0$ denotes forking independence).
\end{lemma}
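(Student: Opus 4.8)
The plan is to follow the pattern of the preceding subsections, replacing the two structural facts about derivations used there --- unique extension of a derivation along \'etale extensions, and amalgamation of the derivations of two regular, algebraically disjoint extensions to their compositum --- by their complex-analytic counterparts for $\nabla$, which are precisely the technical results Moosa establishes in \cite{Moo23}. The inputs are: \textbf{(a)} a $\nabla$-structure extends in at most one way along a finite extension of $\L_0$-structures (equivalently, an $\acl_0$-extension) --- the analogue of the unique extension of a derivation to a separably algebraic extension, where being over $\mathbb C$ is what makes finite covers behave like \'etale ones; and \textbf{(b)} if $A\ind^0_C B$ in $\CCM$ and $A$, $B$ carry $\nabla$-structures restricting to a common one on $C$, then $\nabla$ extends to the free amalgam of $A$ and $B$ over $C$ inside $\UU_0$ --- the analogue of the unique $\D$-structure on a tensor product $K\otimes_E L$.

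Now take $A,B,C\models\CCM_\nabla^\forall$ as in Definition~\ref{like}. For condition~(i): the hypotheses that $C$ be relatively $\acl_0$-closed in $A$ and in $B$ and that $A\ind^0_C B$ play the role of ``$A/C$ and $B/C$ regular and algebraically disjoint'' in the field cases, and under them input~(b) yields a $\nabla$-structure on the $\L_0$-substructure $\langle A,B\rangle_{\L_0}\leq_{\L_0}\UU_0$ --- the analogue of the compositum $K\cdot L$ being a $\D$-field --- for which $A,B\leq_{\L_\nabla}\langle A,B\rangle_{\L_0}$. Since $\CCM_\nabla^\forall$ is universal, $\langle A,B\rangle_{\L_0}$ with this structure is a model of $\CCM_\nabla^\forall$, so $M:=\langle A,B\rangle_{\L_0}$ works. (If one wants the $\L_0$-reduct of $M$ to be a model of $\CCM$, extend $\nabla$ further along a tower of submersive and then finite extensions up to $\UU_0$, using that $\nabla$ always extends --- not necessarily uniquely --- along submersions.)

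For condition~(ii), let $M$ be as in~(i) and let $D$ satisfy $\langle A,B\rangle_{\L_0}\leq_{\L_0}D\leq_{\L_0}\acl_0(A,B)\cap M$. First, compatibility of $\nabla^M$ with the meromorphic maps of $\L_0$, together with $A,B\leq_{\L_\nabla}M$, gives that $\langle A,B\rangle_{\L_0}$ is a $\nabla$-substructure of $M$ --- the analogue of the subring generated by two $\D$-subrings being a $\D$-subring. Since $\acl_0(A,B)\cap M$ is finite over $\langle A,B\rangle_{\L_0}$ and relatively $\acl_0$-closed in $M$, input~(a) shows that $\nabla^M$ restricts to the unique $\nabla$-structure on $\acl_0(A,B)\cap M$ extending that of $\langle A,B\rangle_{\L_0}$, and this in turn restricts to $D$; hence $D\leq_{\L_\nabla}M$. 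The uniqueness clause then follows from input~(a) once more, since any $\nabla$-structure on $D$ that makes $D$ a model of $\CCM_\nabla^\forall$ and extends those of $A$ and $B$ restricts to the (fixed) one on $\langle A,B\rangle_{\L_0}$ and is therefore determined by it along the finite extension $D/\langle A,B\rangle_{\L_0}$.

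The step I expect to require the most care is not conceptual but a matter of bookkeeping: extracting from \cite{Moo23} the precise geometric form of the amalgamation statement~(b) and verifying that the hypotheses of Definition~\ref{like} ($A\ind^0_C B$ together with $C$ being relatively $\acl_0$-closed in $A$ and $B$) translate into it --- the analytic analogue of the step ``regular plus algebraically disjoint equals linearly disjoint'' in the field examples --- and, for condition~(ii), being careful that $\nabla^M$ genuinely restricts to the possibly small substructures $D$ permitted there, which is exactly why one first records that $\langle A,B\rangle_{\L_0}$ is already $\nabla$-closed.
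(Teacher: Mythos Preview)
Your overall strategy matches the paper's: both parts reduce to citing Moosa's two structural lemmas from \cite{Moo23} (his Lemma~6.2 for amalgamation and his Lemma~6.1 for uniqueness). Condition~(i) is fine.

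There is, however, a genuine gap in your treatment of condition~(ii). You write that the unique $\nabla$-structure on $\acl_0(A,B)\cap M$ ``in turn restricts to $D$; hence $D\leq_{\L_\nabla}M$''. This step is not justified. The language $\L_0$ of CCM is relational, so an $\L_0$-substructure $D$ with $\langle A,B\rangle_{\L_0}\leq_{\L_0}D\leq_{\L_0}\acl_0(A,B)\cap M$ is just an arbitrary subset in that range, and there is no reason it should be closed under the function $\nabla^M$. Concretely, if $d\in D\setminus(A\cup B)$, the element $\nabla^M(d)$ lives in the sort $TS$ and is determined by the ambient $\nabla$-structure, but nothing forces it to lie in $D$ unless $D$ is $\dcl_0$-closed. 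Your input~(a) as you have stated it --- uniqueness of the extension along an $\acl_0$-extension --- is in fact stronger than what Moosa proves: his Lemma~6.1 gives uniqueness for \emph{$\dcl_0$-closed} sets inside the $\acl_0$-closure, not for arbitrary intermediate subsets.

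The paper deals with this not by proving the full condition~(ii) but by invoking the weakening in Remark~\ref{aclA-and-useforCCM}(3): since $T_0\subseteq T$ here, it suffices to verify condition~(ii) only for $\dcl_0$-closed $D$, and that is precisely the content of Moosa's Lemma~6.1. You flagged at the end that ``$\nabla^M$ genuinely restricts to the possibly small substructures $D$'' is the delicate point; the resolution is not to argue harder but to restrict the class of $D$ one is obliged to check.
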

\begin{proof}
In \cite[Lemma 6.2]{Moo23} Moosa proved a form of independent amalgamation that readily yields condition (i) of derivation-like. In addition, in \cite[Lemma 6.1]{Moo23}, he proves the uniqueness of differential CCM-structures of $\dcl$-closed sets inside $\acl^{\CCM}$-closures, yielding condition (ii) of derivation-like (or rather the weakening observed in Remark~\ref{aclA-and-useforCCM}(3)).
\end{proof}

In \cite[Theorem 5.5]{Moo23}, Moosa proves that the theory $\CCM_\nabla^\forall$ admits a model companion which he denotes by DCCM. Our results then yield some of the model-theoretic properties of DCCM deduced in \S6 and \S7 of \cite{Moo23}; e.g., completeness, quantifier elimination, description $\acl$ and $\operatorname{dcl}$, and stability.

\

\subsection{Theories with an automorphism}

For our final example, we exhibit some $\L_0$-theories $T_0$ where the $\L_0(\sigma)$-theory $T = T_0 \cup \{ \text{``$\sigma$ is an $\L_0$-automorphism''}\}$ is derivation-like with respect to $T_0$. We would like to thank the referee of this paper for suggesting this example.

Let $T_0$ be a stable $\L_0$-theory where $\dcl_0$ and $\acl_0$ coincide. Let $\ind^0$ be nonforking independence. Suppose we are given $A,B,C \models T^\forall$ with $C$ a common relatively $\acl_0$-closed $\L_0(\sigma)$-substructure and $A \ind^0_C B$. That  part (i) of the derivation-like axiom holds is precisely Lemma~6.3.18 of \cite{Wag00} -- this proof uses stability of $T_0$. For part (ii), we will need Remark~\ref{aclA-and-useforCCM}(3). Let $D = \dcl_0(D)$ be an $\L_0$-substructure of $M \models T$ such that
\[
\langle A, B \rangle_0 \leq_{\L_0} D \leq_{\L_0} \acl_0(A,B) \cap M.
\]
Then $D = \dcl_0(A,B)$. Now $d \in D$ is definable over $A$ and $B$ by some $\L_0$-formula $\phi(x, a, b)$. Then $\phi(x, \sigma(a), \sigma(b))$ defines some element $e$. Since $D$ is $\dcl_0$-closed, $e \in D$. We have that $\sigma|_M(d) = e$, and hence $D \leq_\L M$, and that any $\L_0$-automorphism of $D$ must send $d \mapsto e$, and hence this is the unique $\L_0$-automorphism on $D$ making it a model of $T^\forall$ and extending the ones on $A$ and $B$.

\begin{example}
\begin{enumerate}
    \item The theory of an infinite set with an automorphism is derivation-like with respect to the theory of an infinite set, and hence $\text{Infset} A$ is stable (see Section 5.7 in \cite{TentZiegler12}). Here, $A \ind^0_C B \iff A \cap B \subseteq C$.
    \item Let $K$ be a field and let $\L_0$ be the language of $K$-vector spaces. The theory of infinite $K$-vector spaces with an automorphism is derivation-like with respect to the theory of infinite $K$-vector spaces, and hence $\text{Vect}_K A$ is stable (see Section 5.7 in \cite{TentZiegler12}). Here $A \ind^0_C B \iff \dim(A/C) = \dim(A/BC)$.
\end{enumerate}
\end{example}

\bigskip


\end{document}